\definecolor{jeffColor}{RGB}{102, 0, 204}
\definecolor{yaizaColor}{RGB}{0, 153, 153}
\definecolor{periodColor}{RGB}{255, 167, 105}
\definecolor{dark-green}{RGB}{135, 194, 130}
\tikzset{>=latex} 
\tikzset{font=\small}
\tikzset{mark size=1.5pt, mark options=thin}
\tikzset{pin distance=4pt,
  every pin edge/.style={<-, thin, shorten <= -2pt}}
\newcommand{\N}[1]{{\left\vert\kern-0.25ex\left\vert\kern-0.25ex\left\vert #1 
    \right\vert\kern-0.25ex\right\vert\kern-0.25ex\right\vert}}
\newcommand{\Ell}{\operatorname{ell_h}}
\newtheorem{lemma}{Lemma}
\newtheorem{theorem}{Theorem}
\newtheorem{corollary}[lemma]{Corollary}
\numberwithin{theorem}{section}
\theoremstyle{definition}
\newtheorem{remark}[lemma]{Remark}
\newcommand{\1}{\mathds{1}}
\newcommand{\semi}{\mathcal{N}}
\newcommand{\Id}{\operatorname{Id}}
\newcommand{\mc}[1]{\mathcal{#1}}
\newcommand{\comp}{\operatorname{comp}}
\newcommand{\scatPhase}{^{\textup{sc}}\overline{T^*\mathbb{R}}}
\def\XXint#1#2#3{{\setbox0=\hbox{$#1{#2#3}{\int}$} \vcenter{\hbox{$#2#3$}}\kern-.5\wd0}}
\DeclareMathOperator{\supp}{supp}
\newcommand{\e}{\varepsilon}
\newcommand{\Oph}{Op_h}
\numberwithin{equation}{section}
\numberwithin{lemma}{section}
\DeclareMathOperator{\Ells}{\operatorname{ell_h^{sc}}}
\DeclareMathOperator{\WFs}{\operatorname{WF_{h}^{sc}}}
\DeclareMathOperator{\WF}{\operatorname{WF_{h}}}
\newcommand{\ad}{\operatorname{ad}}
\newcommand{\sgn}{\operatorname{sgn}}
\renewcommand{\Im}{\operatorname{Im}}
\title[Asymptotic expansions of the spectral function in dimension one]
{Complete asymptotic expansions of the spectral function for symbolic perturbations of almost periodic Schr\"odinger operators in dimension one}
\author[J. Galkowski]{Jeffrey Galkowski}
\begin{document}
\begin{abstract}
In this article we consider asymptotics for the spectral function of Schr\"odinger operators on the real line. Let $P:L^2(\mathbb{R})\to L^2(\mathbb{R})$ have the form
$$
P:=-\tfrac{d^2}{dx^2}+W,
$$
where $W$ is a self-adjoint first order differential operator with certain modified almost periodic structure. We show that the kernel of the spectral projector, $\1_{(-\infty,\lambda^2]}(P)$ has a full asymptotic expansion in powers of $\lambda$. In particular, our class of potentials $W$  is stable under perturbation by formally self-adjoint first order differential operators with smooth, compactly supported coefficients. Moreover, it includes certain potentials with \emph{dense pure point spectrum}. The proof combines the gauge transform methods of Parnovski-Shterenberg and Sobolev with Melrose's scattering calculus. 
\end{abstract}
\vspace*{-1.2cm}
\maketitle

\vspace*{-1cm}



\section{Introduction}
Let
$$
P:=D_x^2+W_1D_x+D_xW_1+W_0:L^2(\mathbb{R})\to L^2(\mathbb{R}),
$$
where $W_j\in C^\infty(\mathbb{R};\mathbb{R})$. We study the spectral projection for $P$, $\1_{(-\infty,\lambda^2]}(P)$, when $W_j$, $j=0,1$ satisfy certain almost periodic conditions. Denote by $e_\lambda(x,y)$ the kernel of $\1_{(-\infty,\lambda^2]}(P)$.

We assume that there is $\Theta\subset \mathbb{R}$ countable such that $-\Theta=\Theta$, $0\in \Theta$, and for all $k,N\geq0$ there is $C_{k,N}>0$ such that
\begin{equation}
\label{e:potentialForm}
W_j(x)= \sum_{\theta\in \Theta} e^{i\theta x}w_{\theta,j}(x),\qquad 
|\partial_x^kw_{\theta,j}(x)|\leq C_{k,N}\langle x\rangle^{-k}\langle \theta\rangle^{-N}.
\end{equation}
Before stating the general conditions on $w_\theta$ (see \S\ref{s:almostPeriodic}), we give two consequences of our main theorem (Theorem~\ref{t:main}). Let ${\omega}:=(\omega_1,\dots \omega_d)\in\mathbb{R}^d$. We say ${\omega}$ satisfies the diophantine condition if there are $c,\mu>0$ such that 
\begin{equation}
\label{e:diophantine}
|{\bf{n}}\cdot {\omega}|>c |{\bf{n}}|^{-\mu}, \qquad {\bf{n}}\in \mathbb{Z}^d\setminus\{0\}.
\end{equation}

\begin{theorem}
\label{t:almostPeriodic}
Suppose ${\omega}\in \mathbb{R}^d$ satisfies the diophantine condition~\eqref{e:diophantine} and $W_j$ are as in~\eqref{e:potentialForm} with $\Theta=\mathbb{Z}^d\cdot {\omega}$ and for all $k,N\geq0$ there is $C_{k,N}>0$ such that
$$
|\partial_x^kw_{{\bf{n}}\cdot \omega,j}(x)|\leq C_{k,N}\langle x\rangle^{-k}\langle{\bf{ n}}\rangle^{-N},\qquad {\bf{n}}\in \mathbb{Z}^d,\,j=0,1,
$$ then for $|x-y|>c$,
\begin{equation}
\label{e:asymptotic}
\begin{gathered}
e_\lambda(x,y)\sim \cos(\lambda (x-y))\sum_j\lambda^{-j}a_j(x,y)+\sin(\lambda(x-y))\sum_j \lambda^{-j}b_j(x,y),\qquad e_\lambda(x,x)\sim \sum_j \tilde{a}_j \lambda^{j+1}
\end{gathered}
\end{equation}
where $a_0=0$ and $b_0=\frac{2}{\pi(x-y)}$.  Moreover, we have an oscillatory integral expression for $e_{\lambda}(x,y)$ valid uniformly for $(x,y)$ in any compact subset of $\mathbb{R}^2$.
\end{theorem}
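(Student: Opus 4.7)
The plan is to deduce Theorem~\ref{t:almostPeriodic} from the main theorem, Theorem~\ref{t:main}, by verifying that the Diophantine frequency module $\Theta=\mathbb{Z}^d\cdot\omega$ satisfies the general almost periodic conditions formulated in \S\ref{s:almostPeriodic}. Since Theorem~\ref{t:almostPeriodic} is advertised as a consequence of the main theorem, the argument should be an unpacking of hypotheses rather than new analysis.

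First, I would check the basic structural properties of $\Theta=\mathbb{Z}^d\cdot\omega$: it is countable, contains $0$, and is symmetric since $-\mathbf{n}\cdot\omega=(-\mathbf{n})\cdot\omega$. The Diophantine condition \eqref{e:diophantine} forces the components of $\omega$ to be rationally independent (otherwise $\mathbf{n}\cdot\omega=0$ for some $\mathbf{n}\neq 0$), so the map $\mathbf{n}\mapsto\mathbf{n}\cdot\omega$ is a bijection $\mathbb{Z}^d\to\Theta$ and the Fourier-type expansion in \eqref{e:potentialForm} is unambiguous. The trivial bound $|\theta|=|\mathbf{n}\cdot\omega|\leq|\omega||\mathbf{n}|$ yields $\langle\mathbf{n}\rangle\geq c\langle\theta\rangle$, so the hypothesized $\langle\mathbf{n}\rangle^{-N}$ decay is at least as strong as the $\langle\theta\rangle^{-N}$ decay required in \eqref{e:potentialForm}.

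Next, I would verify the quantitative conditions on $\Theta$ needed to run the Parnovski--Shterenberg--Sobolev gauge transform embedded in Theorem~\ref{t:main}. Such schemes typically require two ingredients: polynomial small-denominator bounds for finite sums $\theta_1+\cdots+\theta_k\in\Theta\setminus\{0\}$, and polynomial upper bounds on $\#\{\theta\in\Theta:|\theta|\leq R\}$. For $\Theta=\mathbb{Z}^d\cdot\omega$, the identity $\theta_1+\cdots+\theta_k=(\mathbf{n}_1+\cdots+\mathbf{n}_k)\cdot\omega$ combined with \eqref{e:diophantine} immediately gives
$$
|\theta_1+\cdots+\theta_k|>c\,|\mathbf{n}_1+\cdots+\mathbf{n}_k|^{-\mu}\geq c\,(k\max_i|\mathbf{n}_i|)^{-\mu}
$$
whenever the sum is nonzero, while the counting bound is an immediate consequence of $|\theta|\leq|\omega||\mathbf{n}|$.

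The main obstacle will be the bookkeeping in matching the precise axioms in \S\ref{s:almostPeriodic}, in particular making sure that the joint symbolic structure in $x$ and $\theta$ demanded by the scattering-calculus formalism of Melrose is compatible with the product form \eqref{e:potentialForm}. Once this correspondence is established, Theorem~\ref{t:main} delivers both the expansion \eqref{e:asymptotic} and the oscillatory integral representation, uniform on compact subsets of $\mathbb{R}^2$. Finally, the explicit values $a_0=0$ and $b_0=\tfrac{2}{\pi(x-y)}$ should be extracted by comparing the leading term with the free case $W_0=W_1=0$: a direct Fourier computation of $\1_{(-\infty,\lambda^2]}(D_x^2)$ produces kernel $\tfrac{\sin(\lambda(x-y))}{\pi(x-y)}\cdot 2$ at leading order in $\lambda^{-1}$, matching the claimed coefficients.
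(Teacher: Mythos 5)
Your proposal follows exactly the paper's route: reduce to Theorem~\ref{t:main} by checking that $\Theta=\mathbb{Z}^d\cdot\omega$ under the Diophantine hypothesis gives an admissible $W$, then obtain \eqref{e:asymptotic} from the oscillatory integral by stationary phase, matching the leading term against the free kernel $\frac{\sin(\lambda(x-y))}{\pi(x-y)}$. This is precisely what the paper does, invoking Remark~\ref{r:1} for admissibility; your small-denominator estimate $|\theta_1+\cdots+\theta_k|\gtrsim|\sum_i\mathbf{n}_i|^{-\mu}$ is indeed the heart of that remark, where it feeds into Lemma~\ref{l:basicBound} to control the maps $s_{k,\semi}$ of \S\ref{s:almostPeriodic} and yield the summability required in \eqref{e:estimates}.

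Two points to tighten. First, Theorem~\ref{t:main} concerns the semiclassical operator $P=-h^2\Delta+hW(x,hD)$, so you need to make the rescaling $h=\lambda^{-1}$, $W(x,hD)=h(W_1hD_x+hD_xW_1)+h^2W_0$, explicit; the assumed decay $|\partial_x^kw_{\mathbf{n}\cdot\omega,j}(x)|\lesssim\langle x\rangle^{-k}\langle\mathbf{n}\rangle^{-N}$ is exactly what puts each $w_\theta$ in the scattering class $S^{1,0}$ after this change of variables. Second, the ``polynomial counting bound'' $\#\{\theta\in\Theta:|\theta|\leq R\}\lesssim R^C$ is neither hypothesized nor needed, and in fact it is \emph{false} for $\Theta=\mathbb{Z}^d\cdot\omega$ with $d\geq 2$: the inequality $|\mathbf{n}\cdot\omega|\leq|\omega||\mathbf{n}|$ gives you $\{|\mathbf{n}|\leq R/|\omega|\}\subset\{|\theta|\leq R\}$, the wrong inclusion, and the latter set is an unbounded slab in $\mathbb{Z}^d$. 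What the admissibility condition \eqref{e:estimates} actually requires is summability of $s_{k,\semi}$ over $\Theta^k$, and this holds because Lemma~\ref{l:basicBound} converts the small denominator into a power of $\sum_i|\mathbf{n}_i|$ which the $\langle\mathbf{n}_i\rangle^{-N}$ decay absorbs after taking $N$ large.
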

\begin{remark}
It is easy to see that the condition~\eqref{e:diophantine} is generic in the sense that it is satisfied for Lebesgue almost every $\omega \in [-1,1]^d$.
\end{remark}

Next, we state a theorem in the limit periodic case. 

\begin{theorem}
\label{t:limitPeriodic}
Let $\{m_n\}_{n=1}^\infty \subset \mathbb{Z}_+$, and $\Theta=\Theta_+\cup -\Theta_+\cup\{0\}$ where $\Theta_+=\{\theta_n\}_{n=1}^\infty,$ $\theta_n:=m_n/n$. Suppose that  $W_j$ are as in~\eqref{e:potentialForm} such that for all $k,N\geq 0$ there is $C_{k,N}>0$ such that
$$|\partial_x^kw_{\theta_n,j}(x)|\leq C_{k,N}\langle x\rangle^{-k}\langle n\rangle^{-N},\qquad n\geq 1,\,j=0,1,$$ 
then~\eqref{e:asymptotic} holds.
\end{theorem}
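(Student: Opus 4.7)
The plan is to deduce Theorem~\ref{t:limitPeriodic} from the main Theorem~\ref{t:main} by verifying that the limit periodic data $(\Theta,\{w_{\theta_n,j}\})$ is an instance of the almost periodic framework developed in Section~\ref{s:almostPeriodic}. Using self-adjointness of $P$ to set $w_{-\theta_n,j}:=\overline{w_{\theta_n,j}}$, I would first rewrite
\begin{equation*}
W_j(x) = w_{0,j}(x) + \sum_{n\ge 1}\Bigl(e^{i\theta_n x}w_{\theta_n,j}(x) + e^{-i\theta_n x}\overline{w_{\theta_n,j}(x)}\Bigr),
\end{equation*}
which converges absolutely in every Schwartz seminorm by the assumed bound $|\partial_x^k w_{\theta_n,j}(x)|\le C_{k,N}\langle x\rangle^{-k}\langle n\rangle^{-N}$. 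If the same frequency $\theta$ is attained by several indices $n$, the coefficients are added, with convergence guaranteed by the same estimate.

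The key structural observation is that decay in the discrete index $n$, rather than in the frequency $\theta_n$ itself, is the invariant form preserved by the Parnovski-Shterenberg-Sobolev gauge transform. Each gauge step conjugates $P$ by $\Id+\Psi$ where $\Psi$ has Fourier spectrum in finite $\Z$-linear combinations of $\Theta$; the resulting coefficients naturally acquire a multi-index label whose symbolic decay is inherited from the single-index $\langle n\rangle^{-N}$ bound, and the scattering-symbolic structure in $x$ (decay of order $-k$ for the $k$-th derivative) is preserved under the Melrose scattering calculus used in the second half of the proof of Theorem~\ref{t:main}. I therefore expect the general hypothesis stated in Section~\ref{s:almostPeriodic} to be formulated with respect to such an abstract discrete labeling, so that the verification for Theorem~\ref{t:limitPeriodic} amounts to identifying $n\in\Z_+$ as the label.

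The main obstacle, and the point where this argument diverges from Theorem~\ref{t:almostPeriodic}, is the absence of any Diophantine lower bound on $|\theta|$ for $\theta\in\Theta\setminus\{0\}$: the frequencies $m_n/n$ may accumulate anywhere in $\R$, in particular at $0$. However, the gauge transform analysis at spectral parameter $\lambda\to\infty$ only requires lower bounds on the non-resonant expressions $|2\xi+\theta_n|$ for $\xi$ near $\pm\lambda$, and these remain comparable to $\lambda$ whenever $|\theta_n|\ll \lambda$. Genuine resonances occur only for $\theta_n\sim\mp 2\lambda$, and the $\langle n\rangle^{-N}$ decay controls the contribution of the (possibly infinite) family of such indices via a spectral-parameter-dependent threshold $|\theta_n|\le\lambda^{\alpha}$ with some $\alpha\in(0,1)$: above the threshold the gauge step is non-resonant, and below the threshold the rapid $n$-decay absorbs the contribution into the remainder. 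After this gauge reduction $P$ is brought into the normal form ``$D_x^2+c_0$ plus a Melrose scattering-calculus remainder'', and the full asymptotic expansion \eqref{e:asymptotic} for $e_\lambda(x,y)$ follows by applying the scattering pseudodifferential functional calculus to the spectral cutoff $\mathbf{1}_{(-\infty,\lambda^2]}$, exactly as in the diophantine case.
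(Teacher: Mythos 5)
You correctly reduce the theorem to Theorem~\ref{t:main} by checking that the limit periodic data is admissible, and you correctly flag the real obstacle: $\Theta$ has no Diophantine lower bound and the frequencies (and, worse, their finite sums) accumulate at $0$. But the way you dispose of that obstacle misses where the small denominators actually enter. In this one-dimensional, semiclassically rescaled setting there are no resonant zones of the type $2\xi+\theta\approx 0$ to worry about (the paper says as much in the introduction, and frequencies with $|\theta|$ comparable to $h^{-1}$ are already killed by the required $\langle\theta\rangle^{-N}$ decay in \eqref{e:estimates}). The division that hurts is by the frequency itself: each gauge step solves $(D_x+\theta)g_\theta = i w_\theta\chi/2\xi$ (Lemma~\ref{l:integrate}), at a cost of $|\theta|^{-1}$, and after $k$ iterations one must divide by $|\theta_1+\dots+\theta_k|$ for iterated frequencies in $\Theta_k$, which can be arbitrarily small even when every $|\theta_i|$ is of order one. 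Your threshold argument $|\theta_n|\le\lambda^{\alpha}$ concerns single frequencies and a resonance that does not occur here, and it says nothing about these iterated sums; the ``rapid $n$-decay absorbs the contribution'' step is precisely the point that needs a quantitative argument and is not supplied.

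The paper's resolution (Remark~\ref{r:2}) is an arithmetic small-denominator bound special to the limit periodic structure: for $\theta_i=m_{n_i}/n_i$ with $\sum_i\theta_i\neq 0$ one has $|\sum_i\theta_i|\ge (n_1n_2\cdots n_k)^{-1}$, since the sum is a nonzero rational with denominator dividing $n_1\cdots n_k$ (terms with $\sum_i\theta_i=0$ are not divided by at all; they are absorbed into the non-oscillating part $Q$). Feeding this into Lemma~\ref{l:basicBound} gives $s_{k,\semi}(\theta,\mc{W})\le C_k(n_1\cdots n_k)^{N_k}\prod_i\|w_{\theta_i}\|_{\semi}$, and the hypothesis $\|w_{\theta_{n_i}}\|_{\semi}\lesssim \langle n_i\rangle^{-N}$ with $N>N_k+1$ makes the sum over $\Theta^k$ in \eqref{e:estimates} converge. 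This verification of admissibility is the entire content of the reduction, and your proposal does not contain it; without the lower bound on nonzero rational sums the iterated gauge transform cannot be controlled. (A smaller point: the general hypothesis of Section~\ref{s:almostPeriodic} is phrased in terms of the frequencies via the quantities $s_{k,\semi}$, not via an abstract discrete label, so "identifying $n$ as the label" is not by itself a verification.)
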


In both Theorems~\ref{t:almostPeriodic} and~\ref{t:limitPeriodic}, one may add \emph{any} formally self-adjoint first order differential operator $W_{sym}=a_1(x)D_x+b_1(x)$  whose coefficients satisfy $|\partial_x^ka_i(x)|\leq C_k\langle x\rangle^{-k}$ to $W$ and $W+W_{sym}$ will satisfy the assumptions of the Theorem. In addition, Theorems~\ref{t:almostPeriodic} and~\ref{t:limitPeriodic} include examples with arbitrarily large embedded eigenvalues and Theorem~\ref{t:limitPeriodic} includes examples with dense pure point spectrum (See Appendix~\ref{a:example}). 

While full asymptotic expansions are known in the the case that $W$ is compactly supported~\cite{PoSh:83,Va:84} and in the case that $W_1=0$, $W_0=\sum_\theta e^{i\theta x}v_\theta $ with $v_\theta \in \mathbb{C}$ and $\Theta$ satisfying the assumptions of Theorem~\ref{t:almostPeriodic}~\cite{PaSh:16}, to the author's knowledge Theorems~\ref{t:almostPeriodic} and~\ref{t:limitPeriodic} are the first to allow for both types of behavior. The work~\cite{PaSh:16} followed the approach developed in~\cite{PaSh:12,PaSh:09} for the study of the integrated density of states a subject which, for periodic Schro\"odinger operators, has been the focus of a long line of articles (see e.g.~\cite{So:05,So:06,Ka:00,HeMo:98}).

\subsection{Discussion of the proof}

We choose not to state our general results until all of the necessary preliminaries have been introduced (see Theorem~\ref{t:main}). Instead, we outline how our proof draws on and differs from the work of Parnovski--Shterenberg~\cite{PaSh:09,PaSh:12,PaSh:16} and Morozov--Parnovski--Shterenberg~\cite{MoPaSh:14}. These papers handle the much more difficult higher dimensional case of the above problem when $W(x,D)$ is replaced by a potential $V(x)=\sum_{\theta\in \Theta}v_\theta e^{i\theta x}$ where $v_\theta\in\mathbb{C}$ and $\Theta$ is assumed to be countable and satisfying certain diophantine conditions.  The crucial technique used in those articles is the gauge transform (developed in~\cite{So:05,So:06,PaSo:10}) i.e. conjugating the operator $P$ by $e^{iG}$ for some pseudodifferential $G$ constructed so that the conjugated operator takes the form
$H_0+R$ where $H_0$ is a constant coefficient differential operator near frequencies $|\xi|\sim \lambda$ and away from certain resonant zones in the Fourier variable and where $R=O(\lambda^{-N})_{H^{-N}\to H^N}$. The authors are then able to make a sophisticated analysis of the operator $H_0$ acting on Besicovitch spaces. This analysis uses in a crucial way that $H_0$ acts nearly diagonally i.e. that the operator can be thought of as a direct sum of operators acting on resonant frequencies and is diagonal away from these frequencies. The authors write a more or less explicit, albeit complicated, integral formula for the spectral function and then directly analyze this integral.

In this article, we take a somewhat different approach to the second step of the above analysis. Namely, we start with our operator $P$ and, after conjugation by $e^{iG}$, are able to reduce to the case of $H_0+R$ where $H_0$ is a scattering pseudodifferential operator~\cite{Me:94} near the frequencies $|\xi|\sim \lambda$. However, because we have simplified our problem by working in one dimension, resonant zones do not occur. In particular, we will prove a limiting absorption principle for $H_0$ at high enough energies and show that the resulting resolvent operators $(H_0-\lambda^2\mp i0)^{-1}$ satisfy certain `semiclassical outgoing/incoming' properties. These, roughly speaking, state that the resolvent transports singularities in only one direction along the Hamiltonian flow for the symbol of $H_0$ and that these singularities do not return from infinity.  With this in hand, we are able understand the spectral projector for $H_0$ using the wave method of Levitan~\cite{Le:52}, Avakumovi\'c~\cite{Av:56} and H\"ormander~\cite{Ho:68} and hence, using an elementary spectral theory argument, to understand the spectral function for $P$. The crucial fact allowing the proof of a limiting absorption principle is that $H_0$ may be chosen such that the `non-scattering pseudodifferential' part is identically zero on frequencies near $\lambda$.

\medskip\noindent\textbf{Acknowledgements.} Thanks to Leonid Parnovski for many helpful discussions about the literature on almost periodic operators; especially for comments on the articles~\cite{PaSh:09,PaSh:12,PaSh:16} as well as for comments on an early draft. Thanks also the the anonymous referee for careful reading and helpful comments.

\section{General assumptions}
\subsection{Pseudodifferential classes}
We work with pseudodifferential operators in Melrose's scattering calculus~\cite{Me:94}. Since we are working in the simple setting of $\mathbb{R}$, we will not review the construction of an invariant calculus. Instead, we say that $a\in C^\infty(\mathbb{R}^{2})$ lies is $S^{m,n}$ if for all $\alpha,\beta\in \mathbb{N}.$
\begin{equation}
\label{e:scatteringSymbol}
|\partial_x^\alpha\partial_\xi^\beta a(x,\xi)|\leq C_{\alpha\beta}\langle x\rangle^{n-\alpha}\langle \xi\rangle^{m-\beta}.
\end{equation}
We define the seminorms on $S^{m,n}$ by
$$
\|a\|^{m,n}_{\beta,\alpha}=\sum_{j=0}^\alpha\sum_{k=0}^\beta \sup |\partial_x^j\partial_\xi^\beta a(x,\xi)\langle x\rangle^{-n+j}\langle \xi\rangle^{-m+k}|
$$
When it is convenient, we will say $\semi=(m,n,\alpha,\beta)\subset \mathbb{N}^{4}$ is a choice of a seminorm on $S^{m,n}$.

It will also be convenient to have the standard symbol classes on $\mathbb{R}$. For this, we say $a\in C^\infty(\mathbb{R}^2)$ lies in $S^{m}$ if 
$$
|\partial_x^\alpha\partial_\xi^\beta a(x,\xi)|\leq C_{\alpha\beta}\langle \xi\rangle^{m-\beta}.
$$
Note that $S^{m,n}\subset S^m$. We also define the corresponding classes of pesudodifferential operators:
$$
\Psi^{m,n}:=\{ a(x,hD)\mid a\in S^{m,n}\},\qquad \Psi^{m}:=\{a(x,hD)\mid a\in S^m\},
$$
where for $a\in S^m$, 
$$
a(x,hD)u:=\frac{1}{2\pi h}\int e^{\frac{i}{h}(x-y)\xi}a(x,\xi)u(y)dyd\xi.
$$
We sometimes write $\Oph(a)$ for the operator $a(x,hD)$.

Our pseudodifferential operators will have polyhomogeneous symbols. That is, they will be given by $a\in S^{m,n}$, $b\in S^m$ such that there are $a_j\in S^{m-j,n-j}$, $b_j\in S^{m-j}$ satisfying
$$
a(x,\xi)-\sum_{j=0}^{N-1}h^ja_j(x,\xi)\in h^NS^{m-N,n-N},\qquad b(x,\xi)-\sum_{j=0}^{N-1}h^jb_j(x,\xi)\in h^NS^{m-N}.
$$
We will abuse notation slightly from now and and write $a\in S^{m,n}$, $b\in S^m$ to mean that $a$ and $b$ have such expansions and $\Psi^{m,n}$, $\Psi^m$ for the corresponding operators.

Note that both $\Psi^{m,n}$ and $\Psi^m$ come with well behaved symbol maps, $\sigma_{m,n}:\Psi^{m,n}\to S^{m,n}$ and $\sigma_m:\Psi^m\to S^m$ respectively such that 
\begin{gather*}
0\to hS^{m-1,n-1}\overset{a(x,hD)}{\longrightarrow} \Psi^{m,n}\overset{\sigma_{m,n}}{\longrightarrow}S^{m,n}\to 0,\qquad 0\to hS^{m-1}\overset{a(x,hD)}{\longrightarrow} \Psi^{m}\overset{\sigma_{m}}{\longrightarrow}S^{m}\to 0.
\end{gather*}
are short exact sequences. Moreover,
$$
\sigma_{m_1+m_2,n_1+n_2}(AB)=\sigma_{m_1,n_1}(A)\sigma_{m_2,n_2}(B),\qquad \sigma_{m}(AB)=\sigma_{m_1}(A)\sigma_{m_2}(B),
$$
and
\begin{equation*}
\begin{gathered}
\sigma_{m_1+m_2-1,n_1+n_2-1}(ih^{-1}[A,B])=\{\sigma_{m_1,n_1}(A),\sigma_{m_2,n_2}(B)\},\\ \sigma_{m_1+m_2-1}(ih^{-1}[A,B])=\{\sigma_{m_1}(A),\sigma_{m_2}(B)\},
\end{gathered}\qquad \{a,b\}:=\partial_\xi a\partial_x b-\partial_\xi b\partial_x a.
\end{equation*}

For future use, we define norms as follows,
$$
\|u\|_{H_h^{s_1,s_2}}:=\|\langle x\rangle^{s_2}u\|_{H_h^{s_1}},\qquad \|u\|_{H_h^{s_1}}:=\|\langle -h^2\partial_x^2\rangle^{s_1/2} u\|_{L^2}.
$$
We recall the following estimates for pseudodifferential operators
\begin{lemma}
Let $a\in S^{m,n}$, $b\in S^m$. Then
$$
\|a(x,hD)u\|_{H_h^{s_1-m,s_1-n}}\leq C_a \|u\|_{H_h^{s_1,s_2}},\qquad \|b(x,hD)u\|_{H_h^{s_1-m,s_2}}\leq C_b \|u\|_{H_h^{s_1,s_2}},
$$
The maps $S^{m,n}\overset{a(x,hD)}{\longrightarrow} \mc{L}(H_h^{s_1,s_2},H_h^{s_1-m,s_2-n})$ and  $S^{m}\overset{b(x,hD)}{\longrightarrow} \mc{L}(H_h^{s_1},H_h^{s_1-m})$ are continuous.
\end{lemma}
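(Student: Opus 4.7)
The strategy is to reduce both estimates to $L^2\to L^2$ boundedness of operators in $\Psi^{0,0}$ (respectively $\Psi^0$) by conjugating with elliptic order-shifting operators, and then to invoke a Calder\'on--Vaillancourt type theorem whose operator-norm bound is controlled by finitely many symbol seminorms. Continuity of $a\mapsto a(x,hD)$ will then follow automatically from that bound together with the composition formulas.

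First I would identify the weight operators $\Lambda_\xi^s := \langle -h^2\partial_x^2\rangle^{s/2}$ and $\Lambda_x^s := \langle x\rangle^s$ as scattering pseudodifferential operators with polyhomogeneous symbols in $S^{s,0}$ and $S^{0,s}$ respectively; both are elliptic in the appropriate senses, with inverses in the corresponding classes. Since the definition of the norm gives that $\Lambda_\xi^{s_1}\Lambda_x^{s_2} : H_h^{s_1,s_2} \to L^2$ is a metric isomorphism, the asserted bound for $a(x,hD)$ is equivalent to the $L^2\to L^2$ boundedness of
\[
A := \Lambda_\xi^{s_1-m}\Lambda_x^{s_2-n}\, a(x,hD)\, \Lambda_x^{-s_2}\Lambda_\xi^{-s_1}.
\]
By the composition rules $\Psi^{m_1,n_1}\cdot \Psi^{m_2,n_2}\subset \Psi^{m_1+m_2,n_1+n_2}$ (and the analogous rule for $\Psi^m$) recorded just above the lemma, $A\in \Psi^{0,0}$. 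The estimate for $b(x,hD)\in\Psi^m$ is handled in exactly the same fashion, conjugating only with $\Lambda_\xi^s$ to produce an element of $\Psi^0$.

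The core remaining step is the Calder\'on--Vaillancourt theorem for $\Psi^{0,0}$ and $\Psi^0$. A direct proof uses Schur's test on the Schwartz kernel: after integrating by parts in $\xi$ to extract $\langle (x-y)/h\rangle^{-N}$ decay, and splitting the symbol dyadically in $x$ and $\xi$ so that the $S^{0,0}$ decay can be summed geometrically, one obtains an $L^2$ bound controlled by a finite number of seminorms $\|a\|^{0,0}_{\alpha,\beta}$ with $\alpha,\beta$ below a fixed integer. The main obstacle is the bookkeeping in the composition defining $A$: the operators $\Lambda_\xi^s$, $\Lambda_x^{s'}$ and $a(x,hD)$ do not commute, so the total symbol of $A$ is a formal asymptotic sum of iterated Poisson brackets with correction terms in $h\Psi^{-1,-1}$ and lower. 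One must verify that this sum truncates, modulo an arbitrarily negative remainder, to a genuine $\Psi^{0,0}$ element whose seminorms are controlled by finitely many seminorms of $a$; this is precisely what the short exact sequences stated in the excerpt encode, and it is what makes the continuity of $a\mapsto a(x,hD)$ as a map into the stated operator spaces fall out of the same argument.
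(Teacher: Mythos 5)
The paper itself gives no proof of this lemma: it is stated under the heading ``We recall the following estimates for pseudodifferential operators'' as a standard fact from the scattering calculus, so there is no in-paper argument to compare against. Judged on its own terms, your reduction is the standard route and is sound: identifying $\Lambda_\xi^{s}=\langle -h^2\partial_x^2\rangle^{s/2}\in\Psi^{s,0}$, $\Lambda_x^{s}=\langle x\rangle^{s}\in\Psi^{0,s}$ as elliptic, polyhomogeneous, invertible elements of the calculus and observing that $\Lambda_\xi^{s_1}\Lambda_x^{s_2}$ is an isometry $H_h^{s_1,s_2}\to L^2$ (with the factors in that order, matching the paper's definition $\|u\|_{H_h^{s_1,s_2}}=\|\langle -h^2\partial_x^2\rangle^{s_1/2}\langle x\rangle^{s_2}u\|_{L^2}$) is exactly right, and the conjugated operator $A$ lands in $\Psi^{0,0}$ by the composition calculus. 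Two small remarks. First, the displayed inequality in the lemma has a typo ($H_h^{s_1-m,s_1-n}$ should read $H_h^{s_1-m,s_2-n}$, as the second sentence makes clear); you silently corrected this, which is fine. Second, your description of the final step as ``Schur's test after integration by parts, plus a dyadic decomposition'' is a little loose: a bare Schur test fails for general $\Psi^0$ or $\Psi^{0,0}$ symbols because the kernel is not absolutely integrable, and the dyadic pieces do not all come with geometric gain in the supremum norm (a symbol in $S^{0,0}$ is merely bounded; only its \emph{derivatives} decay). The correct invocation here is a Calder\'on--Vaillancourt/Cotlar--Stein argument, which is what actually produces the $L^2$ bound controlled by finitely many seminorms; once you cite that, the continuity of $a\mapsto a(x,hD)$ into the stated operator spaces does follow exactly as you say, since composition in the scattering (and standard semiclassical) calculus is continuous in the symbol seminorms, so the seminorms of the $\Psi^{0,0}$ symbol of $A$ are controlled by finitely many seminorms of $a$.
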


In preparation for the gauge transform method, we prove two  preliminary lemmas on exponentials of elements of $\Psi^0$.
\begin{lemma}
\label{l:eiG}
Let $G\in \Psi^0$ self-adjoint. Then $e^{iG}\in \Psi^0$.
\end{lemma}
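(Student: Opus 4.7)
My plan is to verify that $e^{iG}$ satisfies the semiclassical Beals characterization of $\Psi^0$: a bounded operator $A:L^2(\R)\to L^2(\R)$ lies in $\Psi^0$ if and only if, for every $N\geq 0$ and every $L_1,\ldots,L_N\in\{x, hD_x\}$,
\[
\|\ad_{L_1}\cdots\ad_{L_N}A\|_{L^2\to L^2}\leq C_N h^N.
\]
The $N=0$ bound is immediate: since $G\in\Psi^0$ is a bounded self-adjoint operator on $L^2(\R)$, the spectral theorem shows that $e^{iG}$ is unitary. The remaining commutator estimates will come from iterated application of the Duhamel identity
\[
[L, e^{iG}] \;=\; i\int_0^1 e^{isG}\,[L, G]\, e^{i(1-s)G}\, ds,
\]
together with the fact that both $\ad_x$ and $\ad_{hD_x}$ send $\Psi^m$ into $h\,\Psi^m$. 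This is immediate from the symbol calculus recalled in the paper: the leading symbols are $ih\,\partial_\xi\sigma_m(A)$ and $-ih\,\partial_x\sigma_m(A)$ respectively, so $\ad_x$ actually gains one power of $\xi$ while $\ad_{hD_x}$ does not, but both contribute an independent factor of $h$. Consequently any $k$-fold iterated commutator $\ad_{L_{i_1}}\cdots\ad_{L_{i_k}}G$ lies in $h^k\Psi^0$ and is bounded on $L^2$ with norm $O(h^k)$.

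I would then iterate the Duhamel identity $N$ times, obtaining a finite sum of terms of the schematic form
\[
c\int_{\Delta_m} e^{is_1 G}\, C_1\, e^{is_2 G}\, C_2\cdots C_m\, e^{is_{m+1}G}\, ds,
\]
where each $C_j$ is an iterated adjoint involving some $k_j\geq 1$ of the $L_i$'s applied to $G$, with $\sum_{j=1}^m k_j = N$. Since the exponentials are unitary on $L^2$ and each $C_j$ is $O(h^{k_j})$, every such term contributes $O(h^N)$. Summing the finitely many contributions yields the required bound, and Beals' criterion then gives $e^{iG}\in \Psi^0$.

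The main routine obstacle is the combinatorial bookkeeping for the iterated Duhamel expansion, which I would handle by induction on $N$: an additional $\ad_L$ applied to an integrand of the form $\int_0^1 e^{isG} X e^{i(1-s)G}\, ds$ produces, via Leibniz, three terms---two in which $L$ meets an exponential factor (handled by Duhamel again and the inductive hypothesis) and one in which $L$ meets $X$ (which stays in the correct $h$-weighted pseudodifferential class by the observation above). No new ideas beyond the Beals characterization and Duhamel's formula are needed; the work is entirely careful accounting of the $h$-powers rather than any delicate estimate.
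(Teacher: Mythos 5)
Your strategy (Beals' criterion plus an iterated Duhamel expansion) is genuinely different from the paper's, and the commutator bookkeeping you describe is sound: $[x,\Oph(a)]=\Oph(ih\partial_\xi a)$ and $[hD_x,\Oph(a)]=\Oph(-ih\partial_x a)$, so every iterated adjoint of $G$ lands in $h^k\Psi^0$ with $L^2$ norm $O(h^k)$, the exponentials are unitary, and the simplex integrals give the $O(h^N)$ bounds you need. This correctly shows that $e^{iG}=\Oph(b)$ for some $b$ in the \emph{uniform} symbol class $S(1)$, i.e.\ $|\partial_x^\alpha\partial_\xi^\beta b|\leq C_{\alpha\beta}$.

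The gap is that this is weaker than what the lemma asserts in this paper's conventions. Here $\Psi^0$ means operators with Kohn--Nirenberg symbols satisfying $|\partial_x^\alpha\partial_\xi^\beta b|\leq C_{\alpha\beta}\langle\xi\rangle^{-\beta}$ \emph{and}, by the stated convention just before the symbol-map discussion, admitting a polyhomogeneous expansion $b\sim\sum_j h^j b_j$ with $b_j\in S^{-j}$. The standard semiclassical Beals characterization with $L\in\{x,hD_x\}$ detects neither feature: it cannot see the $\langle\xi\rangle^{-\beta}$ gain (one would need a weighted variant), and it is structurally incapable of producing an asymptotic expansion in powers of $h$. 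The expansion is not cosmetic: it is used downstream, e.g.\ in writing $(e^{iG}\delta_y)(x)=\frac{1}{2\pi h}\int e^{\frac{i}{h}(x-y)\xi}b(x,\xi)\,d\xi$ with $b\sim\sum_j h^jb_j$, $b_j\in S^{-j}$, which feeds into the full expansion of the spectral function in Theorem~\ref{t:main}. The paper's proof is constructive precisely to get this: it builds a parametrix $A(t)=\Oph(e^{itg})+\sum_j h^jB_j(t)$ solving $D_t(e^{-itG}A(t))=O(h^\infty)_{\Psi^{-\infty}}$, then upgrades the Duhamel remainder using $H_h^{\pm N}$ boundedness to conclude $e^{itG}=A(t)+O(h^\infty)_{\Psi^{-\infty}}$, from which polyhomogeneity is inherited from $g$. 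To repair your argument you would either need to graft such a parametrix construction on top of the Beals step, or replace Beals by a refined characterization adapted to polyhomogeneous Kohn--Nirenberg classes; as written, the proposal proves a weaker statement.
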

\begin{proof}
Let $g\in S^0$ such that $G=\Oph(g)$ and $A_0(t):=\Oph(e^{itg})$. We compute
$$
D_t(e^{-itG}A_0(t))=e^{-itG}(-GA_0+\Oph(ge^{itg}))=e^{-itG}h\Oph(r_1(t))
$$
where $r_1\in S^{-1}$. Now, suppose that we have $B_j(t)$, $j=1,\dots, N-1$, $B_j\in \Psi^{-j}$ such that with $A_{N-1}(t):=A_0(t)+\sum_{j=1}^{N-1}h^jB_j(t)$, 
$$
D_t(e^{-itG}A_N(t))=e^{-itG}h^N\Oph(r_N(t))
$$
with $r_N\in S^{-N}$. Then, putting $B_N(t)=\Oph(-i\int_0^te^{i(t-s)g}r_N(s)ds)$ we have
 \begin{align*} 
 D_t(e^{-itG}(A_N(t)+h^NB_N(t))&=e^{-itG}h^N\big(\Oph(r_N(t))-GB_N(t)+D_tB_N(t)\big)\\
 &=e^{-itG}h^{N+1}\Oph(r_{N+1}(t))
 \end{align*}
for some $r_{N+1}\in S^{-N-1}$. Putting $A\sim A_0+\sum_j h^jB_j(t)$, we have
$$
D_t(e^{-itG}A(t))=e^{-itG}O_t(h^\infty)_{\Psi^{-\infty}}.
$$
In particular, integrating, we have 
$$
e^{itG} =A(t)+ \int_0^te^{i(t-s)G}R_\infty(s)ds,\qquad R_\infty(s)=O(h^\infty)_{\Psi^{-\infty}}
$$
Therefore, since for all  $N$, $A(t):H_h^{-N}\to H_h^{-N}$ and $R_\infty:H_h^{-N}\to H_h^N$ are bounded, the fact that $e^{itG}:L^2\to L^2$ is bounded implies that for $N\geq 0$, $e^{itG}:H_h^{-N}\to H_h^{-N}$. But then for $u,v\in C_c^\infty$, 
$$
|\langle e^{itG}u,v\rangle_{L^2}|=|\langle u,e^{-itG}v\rangle_{L^2}|\leq \|u\|_{H_h^N}\|e^{-itG}v\|_{H_h^{-N}}\leq C\|u\|_{H_h^N}\|v\|_{H_h^{-N}}.
$$
In particular, by density, we have $e^{itG}:H_h^N\to H_h^N$ is bounded for all $N$ and hence 
$$
e^{itG}=A(t)+O(h^\infty)_{\Psi^{-\infty}}.
$$

 From the construction, it is clear that since $G$ is polyhomogeneous, so is $e^{itG}$.
\end{proof}

\begin{lemma}
\label{l:conjugate}
Let $G\in \Psi^0$ self adjoint, and $P\in \Psi^m$, 
$$
e^{iG}Pe^{-iG}=\sum_{k=0}^{N-1}\frac{i^k\ad_G^kP}{k!} +O(h^{N})_{H_h^s\to H_h^{s+N-m}}
$$
where 
$
\ad_AB=[A,B].
$
\end{lemma}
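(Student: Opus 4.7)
The plan is a standard Duhamel/Taylor expansion for the one-parameter family
$$F(t):=e^{itG}Pe^{-itG},\qquad t\in[0,1].$$
By Lemma \ref{l:eiG} each $e^{itG}\in\Psi^0$, so $F(t)$ is a well-defined smooth family of operators on the scale $H_h^s$. Since $G$ commutes with $e^{itG}$, differentiation gives $F'(t)=ie^{itG}[G,P]e^{-itG}=i[G,F(t)]=i\operatorname{ad}_G F(t)$, and by induction $F^{(k)}(t)=i^k\operatorname{ad}_G^k F(t)$. Evaluating at $t=0$ yields $F^{(k)}(0)=i^k\operatorname{ad}_G^k P$, which are precisely the terms appearing in the asserted expansion.

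Writing Taylor's theorem with integral remainder at $t=1$,
$$F(1)=\sum_{k=0}^{N-1}\frac{i^k\operatorname{ad}_G^k P}{k!}+\frac{i^N}{(N-1)!}\int_0^1(1-t)^{N-1}e^{itG}(\operatorname{ad}_G^N P)e^{-itG}\,dt,$$
the content of the lemma reduces to a bound on the remainder. The key symbolic input is the commutator rule from the preceding discussion of $\Psi^m$: if $A\in\Psi^0$ and $B\in\Psi^\ell$ then $[A,B]\in h\Psi^{\ell-1}$, since the principal symbol of $ih^{-1}[A,B]$ is the Poisson bracket $\{\sigma A,\sigma B\}$. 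Iterating $N$ times with $A=G$, one obtains $\operatorname{ad}_G^N P\in h^N\Psi^{m-N}$, which by the pseudodifferential mapping lemma is bounded $H_h^s\to H_h^{s+N-m}$ with operator norm $O(h^N)$.

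The last step is to absorb the conjugating factors $e^{\pm itG}$. These belong to $\Psi^0$ by Lemma \ref{l:eiG}, hence are uniformly bounded on every $H_h^s$; composition preserves the $O(h^N)_{H_h^s\to H_h^{s+N-m}}$ bound on the integrand, and integrating in $t\in[0,1]$ preserves it as well. The one genuine technical point is uniformity in $t$: inspecting the construction in Lemma \ref{l:eiG}, the leading symbol of $e^{itG}$ is $e^{itg}$ with $g\in S^0$ real, and the $S^0$ seminorms of $e^{itg}$ are controlled by polynomials in $|t|$ via Fa\`a di Bruno, so the full polyhomogeneous expansion of $e^{itG}$, and hence its operator bounds on $H_h^s$, are uniform for $t\in[0,1]$. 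Modulo this routine verification, the proof is a direct Taylor expansion.
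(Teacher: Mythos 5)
Your proof is correct and follows essentially the same route as the paper's: both differentiate $F(t)=e^{itG}Pe^{-itG}$ to identify $F^{(k)}(0)=i^k\ad_G^kP$, write Taylor's formula with integral remainder at $t=1$, note that $\ad_G^NP\in h^N\Psi^{m-N}$ (hence $O(h^N)_{H_h^s\to H_h^{s+N-m}}$), and absorb the conjugating factors $e^{\pm isG}$ using Lemma~\ref{l:eiG}. You are a bit more explicit about the uniformity of $e^{itG}$'s operator bounds over $t\in[0,1]$, which the paper leaves implicit, but this is a matter of detail rather than a different argument.
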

\begin{proof}
Note that 
$$
(D_t)^ke^{itG}Pe^{-itG}=e^{itG}\ad_G^kPe^{-itG}
$$
and in particular, 
$$
e^{itG}Pe^{-itG}=\sum_{k=0}^{N-1}\frac{t^ki^k}{k!}\ad_G^kP+\int_0^t\frac{(t-s)^{N-1}i^N}{(N-1)!}e^{isG}\ad_G^{N}Pe^{-isG}ds
$$
Now, $\ad_G^NP\in h^{N}\Psi^{m-N}$ and hence, the lemma follows by putting $t=1$ and recalling that $e^{ isG}\in \Psi^0$.
\end{proof}

\subsection{Ellipticity}

Next, we recall the notion of the elliptic set for elements of $\Psi^m$ and $\Psi^{m,n}$. To this end, we compactify $T^*\mathbb{R}$ in the fiber variables to $\overline{T^*\mathbb{R}}\cong \mathbb{R}\times [0,1]$ for $\Psi^m$ and in both the fiber and position variables to $\scatPhase\cong [-1,1]\times [-1,1]$ for $\Psi^{m,n}$. In particular, the boundary defining functions on $\scatPhase$ are $\pm x^{-1}$ near $\pm x=\infty$ and $\pm \xi^{-1}$ near $\pm \xi=\infty$ and those for $\overline{T^*\mathbb{R}}$ are $\pm x^{-1}$. We can now define the elliptic set of $A\in \Psi^{m,n}/\Psi^m$, $\Ells(A)\subset \scatPhase$, and $\Ell(A)\subset \overline{T^*\mathbb{R}}$ respectively as follows.  We say $\rho\in \Ells(A)$ if there is a neighborhood, $U\subset  \scatPhase$ of $\rho$ such that 
$$
\inf_{(x,\xi)\in U}\langle x\rangle^{-m}\langle \xi\rangle^{-n}|\sigma_{m,n}(A)(x,\xi)|>0.
$$
We say that $\rho \in \Ell(A)$ if there is a neighborhood, $U\subset  \overline{T^*\mathbb{R}}$ of $\rho$ such that 
$$
\inf_{(x,\xi)\in U}\langle \xi\rangle^{-n}|\sigma_{m,n}(A)(x,\xi)|>0.
$$

Next, we define the  wavefront set for an element of $\Psi^m$, $\WF(A)\subset \overline{T^*\mathbb{R}}$ and the scattering wavefront set of $A\in \Psi^{m,n}$, $\WFs(A)\subset \scatPhase$. For $A\in \Psi^{m}$, we say $\rho\notin \WF(A)$ if there is $B\in \Psi^{0}$ such that $\rho\in \Ell(B)$ and 
$$
\|BA\|_{H_h^{-N}\to H_h^{N}}\leq C_Nh^N.
$$
For $A\in \Psi^{m,n}$, we say $\rho\notin \WFs(A)$ if there is $B\in \Psi^{0,0}$ such that $\rho\in \Ells(B)$ and 
$$
\|BA\|_{H_h^{-N,-N}\to H_h^{N,N}}\leq C_Nh^N.
$$

We can now state the standard elliptic estimates.
\begin{lemma}
\label{l:elliptic}
Suppose $P\in \Psi^{m,n}$, $A\in \Psi^{0,0}$, with $\WFs(A)\subset \Ells(P)$. Then there is $C>0$ such that for all $N$ there is $C>0$ such that
$$
\|Au\|_{H_h^{s,k}}\leq C\|Pu\|_{H_h^{s-m,k-n}}+C_Nh^N\|u\|_{H_h^{-N,-N}}.
$$
If instead $P\in \Psi^{m}$, $A\in \Psi^{0}$, with $\WF(A)\subset \Ell(P)$. Then there is $C>0$ such that for all $N>0$ there is $C_N>0$ such that
$$
\|Au\|_{H_h^{s}}\leq C\|Pu\|_{H_h^{s-m}}+C_Nh^N\|u\|_{H_h^{-N}}
$$
\end{lemma}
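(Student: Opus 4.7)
The strategy is the classical microlocal elliptic parametrix construction, now in Melrose's scattering calculus. I describe the scattering case; the case of $\Psi^m$ is parallel after dropping the position-variable weights and replacing $\scatPhase$ by $\overline{T^*\mathbb{R}}$.

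Since $\WFs(A)$ is closed in the compact space $\scatPhase$ and contained in the open set $\Ells(P)$, I first pick $\chi \in S^{0,0}$ identically $1$ on a neighborhood of $\WFs(A)$ with $\supp\chi \Subset \Ells(P)$. On $\supp\chi$ the symbol $p := \sigma_{m,n}(P)$ satisfies $|p(x,\xi)| \geq c\langle x\rangle^n\langle \xi\rangle^m$, so $\chi/p \in S^{-m,-n}$. A key observation is that, since $A$ has polyhomogeneous symbol and $\chi \equiv 1$ near $\WFs(A)$, every homogeneous term in the full symbol expansion of $A$ vanishes on $\supp(1-\chi)$; in particular, the total symbol of $A$ agrees with $\chi\cdot\sigma(A)$ modulo $h^\infty S^{-\infty,-\infty}$.

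Next, I iteratively construct $Q \sim \sum_{j\geq 0} h^j q_j \in \Psi^{-m,-n}$ with $q_j \in S^{-m-j,-n-j}$ all supported in $\supp\chi$, so that $QP - A \in h^N\Psi^{-N,-N}$ for every $N$. Set $q_0 := \chi\,\sigma(A)/p$; by the composition formula in the scattering calculus, the principal symbol of $q_0(x,hD)P - A$ is $\chi\sigma(A)-\sigma(A)$, which vanishes by the previous paragraph, leaving a remainder of order $h$ whose principal symbol remains supported in $\supp\chi$ (derivatives of such symbols stay supported there). At step $j+1$, divide the leading symbol of the accumulated error by $p$ and cut off by $\chi$ to define $q_{j+1}$. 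Borel summation then produces $Q$ with
\begin{equation*}
QP = A + R_\infty, \qquad R_\infty = O(h^\infty)_{\Psi^{-\infty,-\infty}},
\end{equation*}
so in particular $R_\infty : H_h^{-N,-N} \to H_h^{N,N}$ has operator norm $O(h^N)$ for every $N$.

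Writing $Au = Q(Pu) - R_\infty u$ and invoking the continuity statement preceding the lemma (which gives $Q : H_h^{s-m,k-n} \to H_h^{s,k}$ bounded) yields the advertised bound. The main (mild) technical point is ensuring that the inductive construction closes with a remainder that is genuinely smoothing rather than merely having wavefront disjoint from $\WFs(A)$: this is achieved by confining each $q_j$ to $\supp\chi$ and by the vanishing of every homogeneous piece of the symbol of $A$ off $\WFs(A)$ noted above.
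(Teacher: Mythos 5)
The paper offers no proof of this lemma --- it is stated as ``the standard elliptic estimates'' and implicitly invoked from the literature --- and your parametrix construction is exactly the standard argument being cited, so the approach is the right one and the proof is essentially correct. One small point where the induction as literally written does not close: at step $j+1$ you ``divide the leading symbol of the accumulated error by $p$ and cut off by $\chi$,'' but then $q_{j+1}p$ cancels only $\chi e_{j+1}$ rather than $e_{j+1}$, leaving an uncancelled piece on the collar where $0<\chi<1$. The fix is immediate: since each error term is already supported in $\supp\chi\Subset\Ells(P)$ and $p$ is elliptic on a neighbourhood of $\supp\chi$, no further cutoff is needed (the quotient $e_{j+1}/p$ extended by zero is already a smooth symbol supported in $\supp\chi$), or equivalently one cuts off by a second function $\chi_1$ with $\chi_1\equiv 1$ on $\supp\chi$ and $\supp\chi_1\Subset\Ells(P)$. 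With that adjustment the argument is complete, including the correct use of polyhomogeneity to conclude that each $a_j$ vanishes identically off $\WFs(A)$ so that $\sigma(q_0(x,hD)P-A)=0$ exactly.
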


\subsection{Propagation estimates}
We next recall some propagation estimates for scattering pseudodifferential operators. Since we will work with operators that are fiber classically elliptic, i.e. $\partial(\scatPhase)_\xi\subset \Ells(P)$, we do not need the full scattering calculus here, and will work with operators that are {fiber compactly microlocalized}. In particular, we say that $A\in \Psi^{m,n}$ is \emph{fiber compactly microlocalized}  and write $A\in \Psi^{\comp, n}$ if there is $C>0$ such that
$$
\WFs(A)\cap \{|\xi|>C\}=\emptyset.
$$
For fiber compactly microlocalized operators, all propagation estimates from the standard calculus (see e.g.~\cite[Appendix E.4]{DyZw:19}) follow using the same proofs but interchanging the roles of $x$ and $\xi$.

Throughout, we let $P\in \Psi^{m,n}$ self-adoint with $\sigma_{m,n}(P)=p$, and write $$\varphi_t:=\exp(t\langle \xi\rangle^{1-m}\langle x\rangle^{1-n}H_p):\scatPhase\to \scatPhase$$ 
for the rescaled Hamiltonian flow.  The following lemma follows as in~\cite[Theorem E.47]{DyZw:19}

\begin{lemma}
\label{l:basicProp}
Let $P\in \Psi^{m,n}$ self-adjiont and suppose that $A,B,B_1\in \Psi^{\comp,0}$.
Furthermore, assume that for all $\rho\in \WFs(A)$, there is $T\geq 0$ such that 
$$
\varphi_{-T}(\rho)\in \Ells(B),\qquad \bigcup_{t\in[-T,0]}\varphi_t(\rho)\subset \Ells(B_1).
$$
Then for all $N$ there is $C>0$ such that for $\e\geq 0$, $u\in \mc{S}'$ with $Bu\in H^{s,k}_h$, $B_1(P-i\e\langle x\rangle^{n})u\in H_h^{s,k-n+1}$
$$
\|Au\|_{H_h^{s,k}}\leq C\|Bu\|_{H_h^{s,k}}+Ch^{-1}\|B_1(P-i\e \langle x\rangle^n)u\|_{H_h^{s,k-n+1}}+C_Nh^N\|u\|_{H_h^{-N,-N}}.
$$
\end{lemma}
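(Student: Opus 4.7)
The plan is a positive commutator argument in the fiber-compactly microlocalized scattering calculus, structurally identical to the standard propagation-of-singularities estimate (e.g.\ \cite[Theorem~E.47]{DyZw:19}), but with the roles of $x$ and $\xi$ interchanged so that the rescaled Hamiltonian flow $\varphi_t$ is the natural object on the compactified phase space $\scatPhase$. Fiber compactness of $A, B, B_1$ makes the argument clean: all relevant wavefront sets avoid the fiber boundary $\{|\xi|=\infty\}$, so only the spatial faces $\{\pm x =\infty\}$ require attention.

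First, I would uniformize the hypothesis. Since $\WFs(A)$ is compact in $\scatPhase$ and $\varphi_t$ acts continuously, a compactness argument upgrades the pointwise assumption to a uniform one: there exist $T_0\geq 0$ and open sets $U\Subset \Ells(B)$, $U_1\Subset \Ells(B_1)$ with $\varphi_{-T_0}(\WFs(A))\subset U$ and $\varphi_{[-T_0,0]}(\WFs(A))\subset U_1$. Next I would construct an escape function $q\in S^{0,0}$ supported in $U_1$, bounded below by $c>0$ near $\WFs(A)$, satisfying
$$
\langle \xi\rangle^{1-m}\langle x\rangle^{1-n} H_p(q^2) = -f^2+e^2+r_\infty,
$$
with $f\in S^{0,0}$ elliptic on $\WFs(A)$, $\supp e\subset U$, and $r_\infty\in S^{-\infty,-\infty}$. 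The standard flow-out recipe, $q^2(\rho)=\int_0^{T_0}\psi'(s)\,\tilde\chi(\varphi_{-s}(\rho))\,ds$ for appropriate cutoffs $\psi,\tilde\chi$, delivers this.

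With $Q=\Oph(q)$, $F=\Oph(f)$, $E=\Oph(e)$, self-adjointness of $P$ gives
$$
\tfrac{i}{h}\langle [P,Q^*Q]u,u\rangle =\tfrac{2}{h}\Im\langle (P-i\e\langle x\rangle^n)u, Q^*Qu\rangle + \tfrac{2\e}{h}\langle \langle x\rangle^n Q^*Qu, u\rangle,
$$
in which the $\e$-term has the favorable sign and may be dropped. Sharp G\aa{}rding in the fiber-compact calculus identifies the left-hand side with $-\|Fu\|^2+\|Eu\|^2$ modulo $O(h)\|Qu\|^2 + O(h^\infty)\|u\|^2_{H_h^{-N,-N}}$. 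Cauchy--Schwarz on the right, together with the fact that $B_1$ is elliptic on $\supp q$ (so it can be freely inserted into the pairing up to smoothing errors), yields
$$
\tfrac{2}{h}\bigl|\langle (P-i\e\langle x\rangle^n)u, Q^*Qu\rangle\bigr| \leq \tfrac{C}{h^2}\|B_1(P-i\e\langle x\rangle^n)u\|^2_{H_h^{0,1-n}} + \tfrac{1}{2}\|Fu\|^2 + O(h^\infty)\|u\|^2,
$$
after which $\tfrac{1}{2}\|Fu\|^2$ is absorbed into the left. Elliptic regularity (Lemma~\ref{l:elliptic}) bounds $\|Eu\|$ by $\|Bu\|$ and $\|Au\|$ by $\|Fu\|$, both up to $O(h^\infty)\|u\|_{H_h^{-N,-N}}$, giving the stated estimate at $(s,k)=(0,0)$. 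General Sobolev indices follow by conjugating the whole argument with $\Oph(\langle \xi\rangle^s\langle x\rangle^k)\in \Psi^{s,k}$.

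The main obstacle is the escape-function construction: $q$ must be a genuine scattering symbol, smooth on the compactification $\scatPhase$, rather than merely an element of $C_c^\infty(\mathbb{R}^2)$. This is precisely where the $\langle x\rangle^{1-n}$ rescaling of the Hamiltonian flow pays off, extending $\varphi_t$ smoothly to the spatial faces and allowing the flow-out cutoffs to vanish to infinite order there. Once this is in place, the rest of the argument is textbook semiclassical machinery.
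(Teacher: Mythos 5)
Your proposal is correct and is exactly the argument the paper intends: the paper proves this lemma only by citing \cite[Theorem~E.47]{DyZw:19} together with the remark that, for fiber compactly microlocalized operators, the standard positive-commutator propagation proofs carry over verbatim with the roles of $x$ and $\xi$ interchanged, which is precisely the escape-function/G\aa{}rding/Cauchy--Schwarz scheme you lay out. The only detail worth flagging is that the compactness step produces finitely many pairs $(T_j, U_j)$ rather than a single uniform $T_0$, so one should split $A$ by a microlocal partition of unity and sum the resulting estimates; this is routine and does not affect the argument.
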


We will also need the radial point estimates in the setting of fiber compactly microlocalized operators. The following two lemmas are a combination of~\cite[Theorem E.52, E.54]{DyZw:19}  together with the arguments in~\cite[Section 3.1]{DyZw:19b}
\begin{lemma}
\label{l:source}
Let $P\in \Psi^{m,n}$ self adjoint with $n>0$ and let 
$$
L\Subset \{ \langle x\rangle^{-n}p=0\}\cap \partial(\scatPhase)_x
$$
be a radial source for $p$.
Let $k'>\frac{n-1}{2}$, fix $B_1\in \Psi^{\comp,0}$ such that $L\subset \Ells(B_1)$. Then there is $A\in \Psi^{\comp,0}(M)$ such that $L\subset \Ells(A)$ and for all $N$, $k>k'$, $\e\geq0$, and $u\in \mc{S}'$ such that $B_1u\in H_h^{s,k'}$ and $B_1({{P}}-i\e\langle x\rangle^{n})u\in H_h^{s,k-n+1}$,
$$
\|Au\|_{H_h^{s,k}}\leq Ch^{-1}\|B_1(P-i\e\langle x\rangle^n)u\|_{H_h^{s,k-n+1}}+C_Nh^N\|u\|_{H_h^{-N,-N}}.
$$
\end{lemma}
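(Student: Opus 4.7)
The plan is to run a positive commutator argument at the source $L$, paralleling the base radial source estimate \cite[Theorem E.52]{DyZw:19} but with the roles of the base and fiber variables exchanged since $L \subset \partial(\scatPhase)_x$. Fiber compact microlocalization eliminates any issue at $|\xi| = \infty$, so we may freely cut off with $\psi(\langle\xi\rangle)\in C_c^\infty$, and all commutator manipulations remain within $\Psi^{\comp,\cdot}$.

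The commutant takes the form $A := \Oph(a)$ with
$$a = \chi(\rho_L)\,\langle x\rangle^{k - (n-1)/2}\,\langle\xi\rangle^s\,\psi(\langle\xi\rangle),$$
where $\rho_L \geq 0$ is a smooth defining function for $L$ in $\scatPhase$ chosen so that $\langle x\rangle^{1-n}\langle\xi\rangle^{1-m}H_p\rho_L \geq \beta\rho_L$ near $L$ for some $\beta>0$ (possible because $L$ is a source of the rescaled flow), $\chi \in C_c^\infty([0,\delta_0))$ is non-increasing with $\chi \equiv 1$ near $0$, and $\supp\,\chi(\rho_L) \subset \Ells(B_1)$. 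The principal symbol of $ih^{-1}[P,A^*A]$ is $H_p(a^2)$, which splits into a term proportional to $\chi\chi'(\rho_L)H_p\rho_L \leq 0$ supported on a shell around $L$ contained in $\Ells(B_1)$, and a term proportional to $(k-(n-1)/2)\chi(\rho_L)^2\, H_p\langle x\rangle/\langle x\rangle$, which is of definite sign at $L$ (the source expansion rate, once $k>(n-1)/2$). The subtraction of $(n-1)/2$ in the exponent of $\langle x\rangle$ is dictated by cancelling the skew-adjoint subprincipal contribution of $A^*PA$ arising from the non-self-adjointness of the weight $\langle x\rangle^k$; this is the origin of the threshold $k'>(n-1)/2$. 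Sharp G\aa rding applied to the weighted inequality $-H_p(a^2) \geq c\,a^2\langle x\rangle^{-1}$ valid on a neighborhood of $L$ then produces a positive lower bound for $\langle ih^{-1}[P,A^*A]u,u\rangle$ in terms of $\|Au\|_{H_h^{s,k}}^2$, modulo a $\tilde B u$ error with $\WFs(\tilde B) \subset \Ells(B_1)$ coming from the $\chi'$ shell.

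Pair this with the self-adjointness identity
$$\tfrac{i}{h}\langle[P,A^*A]u,u\rangle = \tfrac{2}{h}\Im\langle A^*A(P-i\e\langle x\rangle^n)u,u\rangle + \tfrac{2\e}{h}\langle A^*A\langle x\rangle^n u,u\rangle.$$
Since $\e \geq 0$ and $A^*A\langle x\rangle^n \geq 0$ modulo lower order (sharp G\aa rding again), the last term has favorable sign and may be dropped. Cauchy--Schwarz on the first right-hand term, matched with weight $\langle x\rangle^{n-1}$, yields
$$\|Au\|_{H_h^{s,k}}^2 \leq Ch^{-2}\|B_1(P-i\e\langle x\rangle^n)u\|_{H_h^{s,k-n+1}}^2 + Ch\|\tilde Bu\|_{H_h^{s,k-1/2}}^2 + C_Nh^N\|u\|_{H_h^{-N,-N}}^2.$$
The stated bound follows by iterating in half-integer increments of $k$, starting from the a priori hypothesis $B_1u \in H_h^{s,k'}$ and using Lemma~\ref{l:basicProp} to control $\tilde Bu$ on the complement of a neighborhood of $L$ (the shell lies in the expanding side of $L$, so its backward flow reaches $\Ells(B_1)$ transversally to $L$).

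The main obstacle is making the commutator computation rigorous for $u \in \mc{S}'$ at the critical threshold. One inserts a regularizer $\Lambda_\delta = \Oph((1+\delta\langle x\rangle)^{-M}) \in \Psi^{0,-M}$ with $\Lambda_\delta \to I$ in $\Psi^{0,0^+}$ as $\delta \to 0$, replaces $A$ by $A\Lambda_\delta$, and verifies that the symbolic positive commutator bound persists uniformly as $\delta \to 0$, the extra contribution $[P,\Lambda_\delta]$ being either of lower order or of the correct sign relative to the source expansion. Passing to the limit then produces the desired estimate for distributional $u$. This regularization step is standard in the radial-point literature (cf.~\cite[Section 3.1]{DyZw:19b}) but requires care at the threshold $k'=(n-1)/2$, and constitutes the main technical content beyond the symbol calculus.
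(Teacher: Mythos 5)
The paper offers no proof of this lemma beyond the citation to \cite[Theorems E.52, E.54]{DyZw:19} and \cite[Section 3.1]{DyZw:19b} with the roles of $x$ and $\xi$ interchanged, so your sketch must be measured against that standard argument. Your skeleton is largely the right one: the commutant $\chi(\rho_L)\langle x\rangle^{k-(n-1)/2}\langle\xi\rangle^{s}\psi$, the origin of the threshold $k'>\tfrac{n-1}{2}$, the favorable sign of the $\e$-term for $P-i\e\langle x\rangle^{n}$ at a source, and the $\langle x\rangle$-regularization needed to justify the pairing for $u\in\mc{S}'$ are all as in the standard proof.

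The genuine problem is your treatment of the error term attributed to the $\chi'$ shell. You correctly observe that at a radial source $\chi\chi'(\rho_L)H_p\rho_L\leq 0$ and that the weight term is of definite (favorable) sign above threshold; but this means the shell contribution to $-H_p(a^2)$ is \emph{nonnegative} and is simply dropped (kept on the good side of the inequality). That sign is precisely what distinguishes Lemma~\ref{l:source} from Lemma~\ref{l:sink}: the source estimate has no $\|Bu\|$ term on the right because the shell helps rather than hurts. Having introduced a $\|\tilde Bu\|$ error anyway, you propose to close it with Lemma~\ref{l:basicProp} by flowing backward from the shell to $\Ells(B_1)$. This step cannot work: backward trajectories from a punctured neighborhood of a source converge to $L$ itself, and the only hypothesis available there is the qualitative statement $B_1u\in H_h^{s,k'}$ with $k'<k$ — there is no quantitative bound at weight $k$ (or $k-\tfrac12$) anywhere to propagate from, which is exactly why the source estimate must be self-starting. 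The remainder that genuinely does appear, $O(h)\|\tilde Bu\|^2_{H_h^{s,k-1/2}}$, comes from the sharp G\aa rding remainder and lower-order terms in the symbol calculus, not from the shell, and it is absorbed by induction on the weight in half-steps using the source estimate itself at weight $k-\tfrac12$ with nested cutoffs (equivalently, carried along in the $\delta$-regularization), not by the propagation lemma. With that correction the argument closes, and is in fact simpler than what you wrote.
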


\begin{lemma}
\label{l:sink}
Let $P\in \Psi^{m,n}$ as above with $n>0$, let 
$$
L\Subset \{ \langle x\rangle^{-n}p=0\}\cap \partial(\scatPhase)_x
$$
be a radial sink for $p$
Let $k<\frac{n-1}{2}$, fix $B_1\in \Psi^{\comp,0}$ such that $L\subset \Ells(B_1)$. Then there are $A,B\in \Psi^{\comp,0}(M)$ such that $L\subset \Ells(A)$, $\WFs(B)\subset \Ells(B_1)\setminus L$, and for all $N$, $\e\geq 0$, and $u\in \mc{S}'$ such that $Bu\in H_h^{s,k}$ and $B_1({{P}}-i\e\langle x\rangle^n)u\in H_h^{s,k-n+1}$,
$$
\|Au\|_{H_h^{s,k}}\leq C\|Bu\|_{H_h^{s,k}}+ Ch^{-1}\|B_1(P-i\e\langle x\rangle^n)u\|_{H_h^{s,k-n+1}}+C_Nh^N\|u\|_{H_h^{-N,-N}}.
$$
\end{lemma}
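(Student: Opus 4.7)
The proof is a standard radial-sink positive commutator argument in Melrose's scattering calculus, adapted to the fiber-compactly-microlocalized setting exactly as in \cite[Thm.~E.54]{DyZw:19} combined with \cite[\S 3.1]{DyZw:19b}. Since the commutant I construct has fiber-compact microsupport and $L\subset\partial(\scatPhase)_x$, every computation reduces to a one-variable analysis near $L$ in which the roles of $(x,\xi)$ are swapped relative to the standard pseudodifferential calculus; the reader of \cite{DyZw:19} may therefore view the argument as a direct transcription of the standard sink proof.

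Fix nested neighborhoods $L\subset U_0\Subset U_1\subset\Ells(B_1)$ and a cutoff $\chi\in S^{\comp,0}$ with $\chi\equiv 1$ on $U_0$ and $\supp\chi\subset U_1$; because $L$ is a sink for $\varphi_t$, $\chi$ may be arranged so that $H_p\chi\ge 0$ near $L$ (trajectories contract into $L$, so a cutoff equal to $1$ at $L$ grows along the flow). Take as commutant
$a:=\chi\,\langle\xi\rangle^{(1-m)/2}\langle x\rangle^{k-(n-1)/2}$, $A_0:=\Oph(a)$, and declare $A:=\Oph(\chi_0\langle x\rangle^k)$ for a slightly smaller cutoff $\chi_0$ with $L\subset\{\chi_0\equiv 1\}\subset\supp\chi_0\subset U_0$, and $B:=\Oph(b)$ for some $b\in S^{\comp,0}$ with $\supp b\subset U_1\setminus\{\chi_0\equiv 1\}\subset\Ells(B_1)\setminus L$. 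The principal symbol of $ih^{-1}[P,A_0^*A_0]$ is then
$$
H_p|a|^2=(H_p\chi^2)\,\langle\xi\rangle^{1-m}\langle x\rangle^{2k-n+1}+(2k-n+1)\chi^2\langle\xi\rangle^{1-m}\langle x\rangle^{2k-n}\,H_p\langle x\rangle+(\text{lower-order from }H_p\langle\xi\rangle).
$$
On the fiber-compact microsupport $\langle\xi\rangle^{1-m}\langle x\rangle^{1-n}H_p$ is smooth up to $L$, and the sink hypothesis gives $\langle x\rangle^{-n}H_p\langle x\rangle\to c_L>0$ at $L$ (the sign is the same whether $L$ lies over $x=+\infty$ or $x=-\infty$). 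Together with the threshold $k<(n-1)/2$, which makes $2k-n+1<0$, this renders the second term a negative multiple of $\chi^2\langle x\rangle^{2k}$ near $L$; the first term is nonnegative and microsupported in $U_1\setminus U_0$. Collecting,
$$
H_p|a|^2=-e^2+b_0^2+r,
$$
with $e\in S^{\comp,0}$ elliptic on $U_0$ (multiplied by the weight $\langle x\rangle^k$), $b_0\in S^{\comp,0}$ supported in $U_1\setminus U_0$, and $r$ of lower order in both $(x,\xi)$.

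Pairing with $u$ and splitting the commutator as
$\tfrac{i}{h}\langle[P,A_0^*A_0]u,u\rangle=\tfrac{2}{h}\Im\langle(P-i\e\langle x\rangle^n)u,A_0^*A_0 u\rangle+\tfrac{2\e}{h}\Re\langle\langle x\rangle^n A_0^*A_0 u,u\rangle$,
the second term on the right is nonnegative by the self-adjointness of $\langle x\rangle^n A_0^*A_0$ modulo a sharp G{\aa}rding remainder of order $h$; it therefore carries the favorable sign for a sink and may simply be dropped. Sharp G{\aa}rding applied to $e^2$ and $b_0^2$ converts them into $\|Au\|_{H_h^{s,k}}^2$ and $\|Bu\|_{H_h^{s,k}}^2$ up to $O(h)\|u\|_{H_h^{-N,-N}}^2$. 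Inserting $B_1$ via Lemma~\ref{l:elliptic} (valid since $\WFs(A_0)\subset\Ells(B_1)$) and applying Cauchy--Schwarz to the $(P-i\e\langle x\rangle^n)u$ pairing produces
$$
\|Au\|_{H_h^{s,k}}^2\le C\|Bu\|_{H_h^{s,k}}^2+Ch^{-1}\|B_1(P-i\e\langle x\rangle^n)u\|_{H_h^{s-m,k-n+1}}\|A_0 u\|_{H_h^{s,k}}+C_N h^N\|u\|_{H_h^{-N,-N}}^2.
$$
Bounding $\|A_0 u\|_{H_h^{s,k}}\le C(\|Au\|_{H_h^{s,k}}+\|Bu\|_{H_h^{s,k}})+O(h^\infty)\|u\|_{H_h^{-N,-N}}$, then absorbing the $\|Au\|$ contribution from the right into the left and taking square roots yields the claimed estimate. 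The main technical point is the sign bookkeeping at $L$: one must check that the sign of $H_p\langle x\rangle$ combined with the sign of $2k-n+1$ produces the sink positivity, which is precisely where the threshold hypothesis $k<(n-1)/2$ enters decisively; every other ingredient is routine once this has been verified.
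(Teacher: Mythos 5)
Your argument is the standard positive-commutator radial-sink estimate with the roles of $x$ and $\xi$ interchanged, which is precisely what the paper invokes — it offers no proof of its own, citing \cite[Theorem E.54]{DyZw:19} together with the $x\leftrightarrow\xi$ swap — and your sign bookkeeping (sink gives $\langle x\rangle^{-n}H_p\langle x\rangle\geq c_L>0$, so the weight term is negative exactly when $2k-n+1<0$, and the $\e$-term is favorable and can be dropped) is correct. The only points you elide are standard: the a priori finiteness of $\|Au\|_{H_h^{s,k}}$ requires the usual regularization of the weight $\langle x\rangle^{k-(n-1)/2}$ (only $Bu\in H_h^{s,k}$ is assumed), and the term $\chi^2\langle x\rangle^{2k-n+1}H_p\langle\xi\rangle^{1-m}$ is \emph{not} of lower symbolic order (it lies in $S^{0,2k}$ like the main term) but is absorbable after shrinking the neighborhood because the fiber component of the rescaled Hamilton field vanishes on the radial set $L$.
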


\section{Almost periodic potentials}
\label{s:almostPeriodic}
\subsection{Assumptions on the potential}
We now introduce the objects necessary for our assumptions on the perturbation $W$. We say that $\Theta\subset \mathbb{R}$ is a \emph{frequency set} if
 $\Theta$ is countable, $\Theta=-\Theta$ and $0\in \Theta$.  We write $\Theta^k:=\Theta\times \underset{k-2}{\cdots}\times\Theta$ and $\Theta_k:=\Theta+ \underset{k-2}{\cdots}+\Theta$ and

For a frequency set $\Theta$, and a seminorm $\semi$, on $S^{m,n}$, we will need a family of maps $s_{k,\semi}:\Theta^k\times (S^{m,n})^{\Theta}\to [0,\infty)$. We denote an element $(w_\theta)_{\theta\in\Theta}\in (S^{m,n})^\Theta$ by $\mc{W}$. 
Fix a seminorm, $\semi$ and define
$$
s_{0,\semi}(\mc{W})=1,\qquad s_{1,K}(\theta,\mc{W})=\begin{cases}\frac{\|w_\theta\|_{\semi}}{|\theta|}& \theta\neq 0\\0&\theta=0\end{cases}.
$$
Next, for $\alpha\in \mathbb{N}^j$ with $|\alpha|=k$, define $\beta_i(\alpha)=\sum_{\ell=1}^{i-1}\alpha_\ell$. Then, for $\theta\in \Theta^k$, we write $\theta_{\alpha,i}:=(\theta_{\beta_i(\alpha)+1},\dots \theta_{\beta_{i+1}(\alpha)})\in \Theta^{\alpha_i}$. We can now define 
\begin{gather*}
s_{\alpha,\semi}(\theta,\mc{W}):=\prod_{i=1}^j s_{\alpha_i,\semi}(\theta_{\alpha,i},\mc{W}),\\
s_{k,\semi}(\theta,,\mc{W})= \begin{cases}\frac{1}{|\sum_{i=1}^k\theta_i|}\sum_{p\in Sym(k)}\sum_{|\alpha|=k,\alpha_i\leq k/2}s_{\alpha,\semi}(p(\theta))&\sum_{i=1}^k\theta_i\neq 0\\0&\sum_i\theta_i=0.
\end{cases}
\end{gather*}
where $Sym(k)$ denotes the symmetric group on $k$ elements.

The following two lemmas on the behavior of $s_{k,\semi}$ will be useful below. Their proofs are elementary and we postpone them to Appendix~\ref{a:properties}.

\begin{lemma}
\label{l:basicBound}
There are $C_k, N_k>0$ such that for $\theta\in \Theta^k$,
\begin{equation}
\label{e:basicBound}
|s_{k,\semi}(\theta,\mc{W})|\leq C_k\frac{\prod_{i=1}^k\|w_{\theta_i}\|_{\semi}}{\inf \{|\omega|^{N_k}\mid\omega\in\{\theta_1,0\}+\dots+\{\theta_k,0\}\setminus 0\}}
\end{equation}
\end{lemma}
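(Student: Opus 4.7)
The plan is to argue by strong induction on $k$. The base cases $k=0$ (where $s_{0,\semi}\equiv 1$) and $k=1$ are immediate from the definition: for $k=1$ and $\theta\neq 0$, $|s_{1,\semi}(\theta,\mc{W})|=\|w_\theta\|_\semi/|\theta|$, and since the only nonzero element of $\{\theta_1,0\}\setminus 0$ is $\theta_1$ itself, this matches~\eqref{e:basicBound} with $N_1=1$.

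For the inductive step, fix $k\geq 2$ and assume the bound holds with constants $C_j,N_j$ for every $0\leq j<k$. Set
$$
\tau=\tau(\theta):=\inf\bigl\{|\omega|:\omega\in\{\theta_1,0\}+\cdots+\{\theta_k,0\}\setminus 0\bigr\},
$$
so that $\inf\{|\omega|^{N_k}:\omega\in\cdots\}=\tau^{N_k}$. Unfolding the definition gives
$$
|s_{k,\semi}(\theta,\mc{W})|\leq \frac{1}{|\textstyle\sum_{i=1}^k\theta_i|}\sum_{p\in Sym(k)}\sum_{\substack{|\alpha|=k\\\alpha_i\leq k/2}}\prod_{i}|s_{\alpha_i,\semi}(p(\theta)_{\alpha,i},\mc{W})|.
$$
The constraint $\alpha_i\leq k/2$ forces $\alpha_i<k$ for each $i$, so the inductive hypothesis applies to every factor.

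The key observation is that the nonzero partial sums appearing in the denominator of the inductive bound for $s_{\alpha_i,\semi}(p(\theta)_{\alpha,i},\mc{W})$ are partial sums of $\theta$ taken over an index subset of $\{1,\dots,k\}$, and therefore lie inside $\{\theta_1,0\}+\cdots+\{\theta_k,0\}\setminus 0$. By monotonicity of infima under set inclusion, the corresponding block-infimum $\tau_i$ satisfies $\tau_i\geq \tau$, so $\tau_i^{-N_{\alpha_i}}\leq \tau^{-N_{\alpha_i}}$. Multiplying across the blocks,
$$
\prod_i \tau_i^{-N_{\alpha_i}}\leq \tau^{-\sum_i N_{\alpha_i}}\leq \tau^{-kN'},\qquad N':=\max_{1\leq j\leq k/2}N_j.
$$
The prefactor $|\sum_i\theta_i|^{-1}$ contributes an additional $\tau^{-1}$ since $\sum_i\theta_i$ is itself one of the candidate $\omega$'s. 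Absorbing the (finite, $k$-dependent) combinatorial count of pairs $(p,\alpha)$ and the products $\prod_i C_{\alpha_i}$ into a new constant, we recover~\eqref{e:basicBound} with exponent $N_k:=1+k\max_{j\leq k/2}N_j$.

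The approach requires essentially no hard analysis; the main subtlety is bookkeeping, specifically verifying the set-theoretic inclusion giving $\tau_i\geq \tau$ and checking that the recursion $N_k=1+k\max_{j\leq k/2}N_j$ is well-founded. The latter is guaranteed precisely by the built-in constraint $\alpha_i\leq k/2$ in the definition of $s_{k,\semi}$, which is what makes this definition amenable to induction in the first place.
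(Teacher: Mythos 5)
Your proposal is correct and follows essentially the same route as the paper's proof: strong induction on $k$ (enabled by the constraint $\alpha_i\leq k/2$), the set-inclusion observation that each block's infimum dominates the global infimum $\tau$, and a recursive choice of $N_k$ (the paper takes $N_k=\sup\{1+\sum_iN_{\alpha_i}\}$, of which your $1+k\max_{j\leq k/2}N_j$ is just a cruder upper bound). No substantive difference.
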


\begin{lemma}
\label{l:induct}
Suppose that $\tilde{\mc{W}}\in (S^{m,n})^{\Theta_n}$ with
$
(\tilde{\mc{W}})_{\theta_1+\dots+\theta_n}=\tilde{w}_{\theta_1\dots\theta_n}
$
such that for all $\semi$ there is $\semi'$ satisfying
$$
\|\tilde{w}_{\theta_1\dots\theta_n}\|_{\semi}\leq \frac{\prod_{i=1}^n \|w_{\theta_i}\|_{\semi'}}{|\theta_i|}.
$$
Then for all $\semi$, there is $\semi'$ such that 
$$
s_{k,\semi}(\theta_1+\dots+\theta_n,\tilde{\mc{W}})\leq s_{nk,\semi'}((\theta_1,\dots,\theta_n),\mc{W}).
$$
\end{lemma}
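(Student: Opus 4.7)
The plan is to proceed by induction on $k$, unfolding the recursive definition of $s_{k,\semi}$ and reducing to the hypothesis on $\tilde{w}$ at the base. I interpret ``$\theta_1+\dots+\theta_n$'' on the left as shorthand for a $k$-tuple $(\omega_1,\dots,\omega_k)\in\Theta_n^k$ with $\omega_i=\theta_1^{(i)}+\dots+\theta_n^{(i)}$, and ``$(\theta_1,\dots,\theta_n)$'' on the right as the concatenated $nk$-tuple in $\Theta^{nk}$ built from these $\theta_j^{(i)}$.

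For $k=1$ the definition gives $s_{1,\semi}(\omega,\tilde{\mc{W}})=\|\tilde{w}_{\theta_1\dots\theta_n}\|_\semi/|\omega|$, which by hypothesis is bounded by $|\omega|^{-1}\prod_i \|w_{\theta_i}\|_{\semi'}/|\theta_i|$; this equals precisely the summand in the expansion of $s_{n,\semi'}((\theta_1,\dots,\theta_n),\mc{W})$ corresponding to $\alpha=(1,\dots,1)$ and the identity permutation, and nonnegativity of all other summands closes the base case. For the inductive step, unfold
$$s_{k,\semi}((\omega_1,\dots,\omega_k),\tilde{\mc{W}})=\frac{1}{|\sum_i\omega_i|}\sum_{p\in\mathrm{Sym}(k)}\sum_{\substack{|\alpha|=k\\ \alpha_j\leq k/2}}\prod_j s_{\alpha_j,\semi}(\omega_{p,\alpha,j},\tilde{\mc{W}})$$
and bound each factor $s_{\alpha_j,\semi}$ (on an $\alpha_j$-tuple of $\Theta_n$-elements) by $s_{n\alpha_j,\semi'}$ on the concatenated $n\alpha_j$-tuple in $\Theta^{n\alpha_j}$. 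The resulting upper bound is a sum of terms appearing in the expansion of $s_{nk,\semi'}$ on the concatenated $nk$-tuple: the composition $n\alpha=(n\alpha_1,\dots)$ is admissible because $\alpha_j\leq k/2$ implies $n\alpha_j\leq nk/2$, the outer prefactor agrees since $\sum_i\omega_i=\sum_{i,j}\theta_j^{(i)}$, and each $p\in\mathrm{Sym}(k)$ lifts to a block permutation in $\mathrm{Sym}(nk)$ acting on length-$n$ blocks. Nonnegativity of the remaining admissible summands then closes the induction.

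The main obstacle is the combinatorial bookkeeping: verifying that the map from admissible $(p,\alpha)$ on the LHS to admissible $(p',n\alpha)$ on the RHS is injective, so that distinct LHS summands correspond to distinct (nonnegative) RHS summands. This follows once one observes that the block-permutation embedding $\mathrm{Sym}(k)\hookrightarrow\mathrm{Sym}(nk)$ and the refinement map $\alpha\mapsto n\alpha$ are both injective. The single seminorm $\semi'=\semi'(k,\semi)$ is produced by iterating the hypothesis through the recursion, whose depth is at most $\lceil\log_2 k\rceil$, so the loss remains finite.
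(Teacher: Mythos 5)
Your proposal matches the paper's proof essentially exactly: both proceed by induction on $k$, dispose of $k=0,1$ directly from the hypothesis together with the nonnegativity of the extra summands in the expansion of $s_{n,\semi'}$, and in the inductive step unfold the definition of $s_{k,\semi}$, apply the inductive bound to each factor $s_{\alpha_j,\semi}$ (valid since $\alpha_j\leq k/2<k$), and observe that the resulting sum over admissible $(p,\alpha)$ with $p\in\mathrm{Sym}(k)$ injects into the sum over admissible $(p',\alpha')$ with $p'\in\mathrm{Sym}(nk)$, $\alpha'=n\alpha$ defining $s_{nk,\semi'}$. The paper leaves the injectivity of the map $(p,\alpha)\mapsto(p',n\alpha)$ and the finite propagation of seminorms implicit; you correctly identify both as the points that need checking, and your observations resolve them.
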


We say that $W\in \Psi^1$ is \emph{admissible} if
\begin{equation}
\label{e:wForm}
W= \sum_{\theta\in \Theta} e^{i\theta x}w_\theta(x,hD)
\end{equation}
where $w_\theta\in S^{1,0}$ and
for all $0\leq k$, $\semi$, and $N>0$ we have 
\begin{equation}
\label{e:estimates}
\sum_{\theta\in \Theta^k}s_{k,\semi}(\theta,\mc{W})\leq C_{k,\semi},\qquad \|w_\theta\|_{\semi}<C_{N,\semi}\langle \theta\rangle^{-N},
\end{equation}
where $\mc{W}=(w_\theta)_{\theta\in\Theta}$.

\begin{remark}
When $W$ is smooth and periodic i.e.  $\Theta= r\mathbb{Z}$, and $\|w_\theta\|_{\semi}\leq C_{N,\semi}\langle \theta\rangle^{-N}$, then $W$ is admissible.\end{remark}
\begin{remark}
\label{r:1}
If $W$ is an approximately almost periodic function  of the form
$$
W=\sum_{{\bf{n}}\in \mathbb{Z}^d}e^{i{\bf{n}}\cdot \omega x}w_{\bf{n}}(x,hD)
$$
with $\|w_{\bf{n}}\|_{\semi}\leq C_{N,\semi}\langle {\bf{n}}\rangle^{-N}$ and if $\omega=(\omega_1,\omega_2,\dots \omega_d)$ satisfies the diophantine condition~\eqref{e:diophantine},
then $W$ is admissible. To see this, without loss of generality, we assume that $\omega\in B(0,1)$. Then if $\theta\in \Theta$, $\theta= {\bf{n}}\cdot \omega$ for some  ${\bf{n}}\in\mathbb{Z}^d$.  In particular, if
$$
\theta_{{\bf{n}}_1},\dots, \theta_{{\bf{n}}_k}\in \Theta,\qquad \sum_{i=1}^k \theta_{{\bf{n}}_i}= {\sum_i\bf{n}_i}\cdot \omega,
$$
and hence, if $\sum_i\theta_{{\bf{n}_i}}\neq 0$, then $|\sum_{i=1}^k\theta_{{\bf{n}}_i}|\geq C|\sum_i{\bf{n}}_i|^{-\mu}.$

Using this, observe that by~\eqref{e:basicBound} there are $C_k$, $N_k$ such that 
$$
s_{k,\semi}(\theta_1,\dots,\theta_k)\leq C_k(\sum_i|{\bf{n}}_i|)^{\mu N_k}\prod_{i=1}^k C_N\langle {{\bf{n}}_i}\rangle^{-N}\leq C_k\prod_{i=1}^{k}C_N\langle {{\bf{n}}_i}\rangle^{-N+N_k\mu}
$$

We thus obtain the desired estimate by taking $N>N_{k}\mu+d$ and summing over ${\bf{n}}_i$, $i=1,\dots k$.

\end{remark}

\begin{remark}
\label{r:2}
Next, we verify that certain approximately limit periodic functions are admissible. Suppose that $\{m_n\}_{n=1}^\infty\subset \mathbb{Z}$ contains $0$ and satisfies $\{m_n\}_{n=1}^\infty=\{-m_n\}_{n=1}^\infty$. Suppose 
$$
W=\sum_n e^{im_n x/n}w_n(x,hD)
$$
and $\|w_n\|_K\leq C_{N,K}\langle \max(n, |m_n|/n)\rangle^{-N}$, then $w_n$ satisfies our conditions with $\mu_M\equiv 0$. Indeed, in this case, $\Theta= \{m_n/n\}_{n=1}^\infty$. Now, note that for $\theta_i\in \Theta$, $\theta_i=m_{n_i}/n_{i}$ 
$$
\sum_{i=1}^k\theta_i\neq 0\qquad\qquad \Rightarrow\qquad\qquad \Big|\sum_i\theta_i\Big|\geq \frac{1}{n_1n_2\cdots n_k}
$$
Using this, observe that by~\eqref{e:basicBound} there are $C_k$, $N_k$ such that 
$$
s_{k,\semi}(\theta_1,\dots,\theta_k)\leq C_k(n_1n_2\cdots n_k)^{N_k}{\|w_{\theta_1}\|_{\semi}\cdots\|w_{\theta_k}\|_{\semi}}
$$
In particular, for $N>N_k$
\begin{align*}
s_{k,\semi}(\theta_1,\dots,\theta_k)&\leq C_k\prod_{i=1}^k C_N^kn_i^{N_k} \langle \max(n_i, m_{n_i}/n_i)\rangle^{-N}\leq C_{N,k} \prod_{i=1}^k \langle n_i\rangle^{N_k-N}.
\end{align*}
We thus obtain the desired estimate by taking $N>N_{k}+1$ and summing over $n_i$, $i=1,\dots k$.
\end{remark}

\begin{theorem}
\label{t:main}
Suppose that $W(x,hD)\in \Psi^1$ is self-adjoint and admissible (i.e.~\eqref{e:wForm} and ~\eqref{e:estimates} hold). Let $0<\delta<1$,
$$
P:=-h^2\Delta+hW(x,hD).
$$
Then there are $a_j\in C_c^\infty(\mathbb{R}^3)$ such that for all $R>0$ there is $T>0$ satisfying for all $E\in[1-\delta,1+\delta]$, $\hat{\rho}\in C_c^\infty(\mathbb{R};[0,1])$ with $\hat\rho\equiv 1 $ on $[-T,T]$, and all $x,y\in B(0,R)$ the spectral projector $1_{(-\infty,E]}(P)$ satisfies
$$
1_{(-\infty,E]}(P)(x,y)=h^{-2}\int_{-\infty}^E\int \hat{\rho}(t) e^{it(\mu-|\xi|^2)+(x-y)\xi)/h}a(x,y,\xi;h)d\xi dtd\mu +O(h^\infty)_{C^\infty},
$$
where $a\sim \sum_j h^ja_j$. 
\end{theorem}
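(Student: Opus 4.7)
The proof decomposes into three stages: (i) a gauge transform reducing $P$ to an operator $\tilde P = H_0 + R$, where $H_0$ is a scattering pseudodifferential operator of class $\Psi^{2,0}$ and $R = O(h^\infty)$ is negligible, (ii) analysis of the spectral projector for $H_0$ via a limiting absorption principle and the Levitan--Avakumovi\'c--H\"ormander wave method, and (iii) conjugating back to recover the representation for $P$ via elementary functional calculus. The admissibility conditions on $W$ — in particular the convergence of $\sum_{\theta\in \Theta^k} s_{k,\semi}(\theta,\mc{W})$ — are precisely what guarantees the first step; Lemma \ref{l:induct} is the tool that makes the inductive construction of $G$ close.

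\textbf{Gauge transform.} I would construct $G \sim \sum_{k\ge 1} h^k G_k$ self-adjoint in $\Psi^0$ of the form $G_k = \sum_{\theta\in \Theta_k} e^{i\theta x} g_{k,\theta}(x,hD)$ by iteratively solving cohomological equations. At stage $k$, writing $\tilde P_{k-1} = e^{iG_{k-1}} P e^{-iG_{k-1}}$ (using Lemma \ref{l:conjugate}), the obstruction at order $h^k$ takes the form $\sum_\theta e^{i\theta x} w_{k,\theta}^{\sharp}(x,hD)$, and one sets $g_{k,\theta} = w_{k,\theta}^{\sharp}/(\theta^2 + 2\theta\xi)$ for $\theta\ne 0$, while the $\theta=0$ term is absorbed into $H_0$. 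The small denominators $|\theta^2 + 2\theta\xi|$ on the region $|\xi|\sim 1$ are controlled by the weights appearing in $s_{k,\semi}$, and the admissibility bound together with Lemma \ref{l:induct} (iterated $k$ times) show that $G_k \in S^{-k}$ with seminorms summable against $h^k$. Borel summation plus Lemma \ref{l:eiG} yield $e^{iG}\in\Psi^0$, and after this conjugation all $e^{i\theta x}$ factors with $\theta\ne 0$ have been removed from the symbol modulo $h^\infty S^{-\infty}$; the surviving $H_0 \in \Psi^{2,0}$ has purely scattering-classical symbol $|\xi|^2 + \text{(lower order, $x$-decaying)}$.

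\textbf{Wave method for $H_0$.} Choose a spectral cutoff $\chi\in C_c^\infty((1-2\delta, 1+2\delta))$ with $\chi\equiv 1$ on $[1-\delta,1+\delta]$. The function $\chi(H_0)$ is microlocalized where $|\xi|^2 \sim 1$, in particular fiber-compactly; there the Hamilton flow of $\sigma(H_0)$ reaches the radial source/sink set $\{x=\pm\infty,\,\pm\xi>0\}$ in the scattering boundary $\scatPhase$. Applying Lemmas \ref{l:source}, \ref{l:sink}, and the propagation estimate Lemma \ref{l:basicProp} gives a limiting absorption principle $(H_0 - E \mp i0)^{-1} : H_h^{s,k} \to H_h^{s,k-1}$ for $k>\tfrac12$ with the outgoing/incoming resolvents having the expected semiclassical outgoing property. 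Stone's formula then expresses $\chi(H_0) \1_{(-\infty,E]}(H_0) \chi(H_0)$ in terms of these resolvents. Using the identity
\begin{equation*}
  \chi(H_0)\1_{(-\infty,E]}(H_0)\chi(H_0) = \frac{1}{2\pi h}\int_{-\infty}^E\!\!\int \hat\rho(t)\, e^{it(\mu-H_0)/h}\,\chi(H_0)^2\, dt\, d\mu + O(h^\infty),
\end{equation*}
valid provided $T$ is chosen larger than the escape time from $B(0,R)$ under the rescaled Hamilton flow (finite speed of propagation combined with the outgoing property kills the tail), a standard WKB/Hamilton--Jacobi parametrix for $e^{-itH_0/h}\chi(H_0)$ of the form $\int e^{i(\varphi(t,x,\xi) - y\xi)/h} b(t,x,y,\xi;h)\, d\xi$ reduces, via the non-degeneracy of the phase in $t$ and the fact that $H_0$ differs from $|\xi|^2$ only by a scattering-decaying correction, to the stated oscillatory integral with phase $t(\mu-|\xi|^2)+(x-y)\xi$ after a stationary-phase change of variables. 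Compact support of $a_j$ in $\xi$ comes from $\chi$ and in $(x,y)$ from the ambient compact set.

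\textbf{Main obstacle.} The most delicate point is the gauge transform of Step 1: the combinatorics of iterating Lemma \ref{l:induct} through infinitely many steps, together with the need to keep $G_k$ scattering-polyhomogeneous with uniform seminorm bounds so that $e^{iG}$ preserves $\Psi^{2,0}$ and does not destroy the scattering structure of $H_0$. Translating the combinatorial bound \eqref{e:basicBound} into actual continuity statements at each $k$ requires careful bookkeeping of which seminorm on $S^{m,n}$ appears where; conversely, the finite-propagation/outgoing argument in Step 3 is subtle because, although in dimension one there are no resonant zones, the flow of $H_0$ is only approximately that of $|\xi|^2$, and so the choice of $T = T(R)$ must use the scattering-compactified flow geometry to guarantee the waves exit $B(0,R)$ transversally before reflecting back.
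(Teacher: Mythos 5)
Your overall architecture — gauge transform to a scattering operator, limiting absorption + wave method, conjugate back — matches the paper's. But several of the intermediate claims are either wrong in detail or skip genuine difficulties.

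\textbf{The gauge transform scheme is wrong for this class.} You propose setting $g_{k,\theta} = w_{k,\theta}^{\sharp}/(\theta^2+2\theta\xi)$. That is the divisor that arises when the oscillations are at semiclassical frequency, i.e. $e^{i\theta x/h}$; in the present normalization the oscillations are $e^{i\theta x}$ with $\theta = O(1)$, so at symbol level the cohomological equation is a transport equation, not a symbolic division. Concretely, with $G$ of principal symbol $\sum_\theta e^{i\theta x}g_\theta(x,\xi)$, the bracket $\{\sigma(G),\xi^2\}$ gives $-2\xi\, e^{i\theta x}(i\theta + \partial_x)g_\theta$, and since the $w_\theta(x,\xi)$ genuinely depend on $x$ (this is the whole point of allowing symbolic perturbations), one must solve the ODE $(D_x+\theta)g_\theta = i\,w_{\theta,k}\chi_1(|\xi|)/(2\xi)$ in $x$. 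This is the content of Lemma~\ref{l:integrate}, which is engineered to produce a \emph{scattering}-decaying error (an $S^{k,-\infty}$ remainder) and a $|\theta|^{-1}$ gain. A pure division by $\theta^2+2\theta\xi$ does not produce any $x$-decay and cannot be made into an element of the scattering calculus; and it fails outright once $w_\theta$ depends on $x$.

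\textbf{The residual operator is not globally scattering.} You claim all $e^{i\theta x}$ factors with $\theta\neq0$ are removed modulo $h^\infty S^{-\infty}$, yielding $H_0\in \Psi^{2,0}$. In fact the gauge transform works only on the frequency shell $|\xi|\sim 1$ where the cutoff $\chi_1$ is nonzero; outside it the oscillatory terms survive and appear in the form $(1-\chi(-h^2\Delta-1))\,h\tilde W\,(1-\chi(-h^2\Delta-1))$. The operator one actually studies is $\tilde P_G = -h^2\Delta + hQ + (1-\chi)h\tilde W(1-\chi) + O(h^N)_{\Psi^{-\infty}}$ with $Q\in\Psi^{-\infty,0}$ scattering-decaying but $\tilde W$ only a standard $\Psi^1$. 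This is why the elliptic estimate~\eqref{e:highFrequency} is needed: to localize the analysis to the region where the operator is genuinely a scattering perturbation of $-h^2\Delta$. Your claim that $H_0\in\Psi^{2,0}$ globally would let you skip this step, but it is not true.

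\textbf{The spectral comparison step is missing.} You say $R=O(h^\infty)$ is \emph{negligible}, but $\1_{(-\infty,E]}$ is a discontinuous functional; $\|R\|$ small does not imply $\1_{(-\infty,E]}(P_G)\approx\1_{(-\infty,E]}(P_G-R)$, and indeed for generic $E$ the spectral projectors of two nearby operators may differ by something of order $1$. The paper handles this by invoking a comparison lemma of Parnovski--Shterenberg (\cite[Lemma 4.2]{PaSh:16}) which bounds the difference in terms of $\|\1_{[E-\mu,E+\mu]}(\tilde P_G)f\|$ plus an $O(h^{3N}/\mu)$ error, and then bounds the small-window projector via the parametrix for the spectral measure (Lemma~\ref{l:spectralMeasure}). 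Without an argument of this kind your step (iii) does not close. Relatedly, your Borel-summed $G$ giving an $O(h^\infty)$ remainder is not what the paper constructs — each application of Corollary~\ref{c:gauge} fixes a finite $N$ with $O(h^N)$ error; the theorem then follows by taking $N$ large.

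Your identification of the ``main obstacle'' (the combinatorics of iterating Lemma~\ref{l:induct} and the finite-propagation argument) is fair, but the items above are the ones that a referee would flag first.
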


After putting $h=\lambda^{-1}$, $W(x,hD)=h(W_1(x)hD_x+hD_xW_1(x))+h^2W_0(x)$, an application of the method of stationary phase, the analysis in Remarks~\ref{r:1} and~\ref{r:2}, and an application Theorem~\ref{t:main} proves Theorems~\ref{t:almostPeriodic} and~\ref{t:limitPeriodic}.
(See~\cite{Iv:18} for a related problem.)
\section{Gauge transforms}

Before gauge transforming our operator, we need the following symbolic lemma which allows us to solve away errors.
\begin{lemma}
\label{l:integrate}
Suppose that $a\in S^{k,0}$. Then, there is $b\in S^{k,0}$ such that $ (D_x+\theta)b-a=r\in S^{k,-\infty}$ and
$$
\|b\|^{k,0}_{\beta,\alpha} \leq C_{\alpha\beta k}|\theta|^{-1}\|a\|^{k,0}_{\beta,\alpha+2},\qquad \|r\|^{k,-N}_{\beta,\alpha}\leq C_{\alpha\beta N }|\theta|^{-1}\|a\|^{k,0}_{\beta,\alpha+N+2}f
$$
\end{lemma}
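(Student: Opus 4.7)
The plan is to solve $(D_x+\theta)b=a$ modulo $S^{k,-\infty}$ by iterating an ansatz that gains one order of $x$-decay at each step, then to turn the resulting formal series into an honest symbol via Borel summation. Setting $b_0:=a/\theta$, a direct computation gives $(D_x+\theta)b_0-a=D_x a/\theta$, and since $D_x\colon S^{k,0}\to S^{k,-1}$, the error gains one power of $\langle x\rangle^{-1}$. Iterating, I define $b_j:=(-1)^j D_x^j a/\theta^{j+1}\in S^{k,-j}$, and a telescoping calculation shows
$$
(D_x+\theta)\sum_{j=0}^{N-1}b_j-a=\frac{(-1)^{N-1}D_x^N a}{\theta^N}\in S^{k,-N}.
$$

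To make sense of the formal series $\sum_j b_j$, I Borel-sum it. Fix $\psi\in C^\infty(\mathbb{R})$ with $\psi=0$ on $|t|\leq 1$, $\psi=1$ on $|t|\geq 2$, and choose a sequence $R_j\to\infty$ growing so rapidly that
$\|\psi(x/R_j)b_j\|^{k,0}_{\beta,\alpha}\leq 2^{-j}|\theta|^{-1}\|a\|^{k,0}_{\beta,\alpha+2}$
for every $(\alpha,\beta)$ with $\alpha+\beta\leq j$. This is feasible because $\psi(x/R_j)b_j$ is supported on $\{|x|\geq R_j\}$, where the built-in decay $\langle x\rangle^{-j}$ coming from $b_j\in S^{k,-j}$ produces a gain of $R_j^{-j}$, while each Leibniz derivative of $\psi(x/R_j)$ carries an inverse power of $R_j$. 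Defining
$$
b:=b_0+b_1+b_2+\sum_{j\geq 3}\psi(x/R_j)b_j\in S^{k,0},
$$
one has $b-\sum_{j=0}^{N-1}b_j\in S^{k,-N}$ for every $N$: the corrections $(\psi(x/R_j)-1)b_j$ for $j\leq N-1$ are compactly supported in $x$ hence already in $S^{k,-\infty}$, while the tail $\sum_{j\geq N}\psi(x/R_j)b_j$ converges in $S^{k,-N}$ by the choice of $R_j$.

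Setting $r:=(D_x+\theta)b-a$ and combining the previous two displays, $r\in S^{k,-N}$ for every $N$, so $r\in S^{k,-\infty}$. The seminorm bound on $b$ is read off from the explicit three-term partial sum $b_0+b_1+b_2$, whose $(\alpha,\beta)$-seminorm is controlled by $C|\theta|^{-1}\|a\|^{k,0}_{\beta,\alpha+2}$ (the two extra $x$-derivatives coming from $b_2$ involving $D_x^2a$), plus the Borel tail which is summably small in the same seminorm by construction. The estimate for $r$ in $S^{k,-N}$ follows from the explicit principal error $(-1)^{N-1}D_x^N a/\theta^N$, whose $(k,-N)$-seminorm is bounded by $|\theta|^{-N}\|a\|^{k,0}_{\beta,\alpha+N}\leq|\theta|^{-1}\|a\|^{k,0}_{\beta,\alpha+N+2}$, together with the remainder $(D_x+\theta)(b-\sum_{j<N}b_j)$, which lies in the same space with smaller norm.

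The main obstacle is purely the seminorm bookkeeping in the Borel step: one must choose a single sequence $R_j$ that simultaneously controls every $S^{k,-N}$ seminorm while preserving the claimed loss of only two $x$-derivatives in the bound for $b$. This is handled by the standard diagonal argument, where $R_j$ is taken large enough that the finitely many seminorm conditions indexed by $(\alpha,\beta)\leq j$ are each satisfied with a factor $2^{-j}$ to spare.
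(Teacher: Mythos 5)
Your formal series $b_j=(-1)^jD_x^j a/\theta^{j+1}$ and the telescoping identity are correct, and the Borel-summation idea is a natural way to try to promote the asymptotic sum to a genuine symbol. However, the construction cannot give the seminorm bounds in the lemma, and the obstruction is precisely the one the paper's proof is designed to handle: the statement must hold \emph{uniformly for all nonzero $\theta$}, including $|\theta|\to 0$, because in the gauge transform $\Theta$ may contain arbitrarily small nonzero frequencies (this is the content of the diophantine/limit-periodic assumptions, and the $|\theta|^{-1}$ loss is exactly what the $s_{k,\mathcal N}$ bookkeeping has to absorb). Your $b_j$ scales like $|\theta|^{-j-1}$, so already $\|b_1\|^{k,0}_{\beta,\alpha}\approx|\theta|^{-2}\|a\|^{k,0}_{\beta,\alpha+1}$, which is worse than $|\theta|^{-1}$ for $|\theta|<1$; the same happens for the remainder term $(-1)^{N-1}D_x^Na/\theta^N$, whose stated bound you obtain only by using $|\theta|^{-N}\le|\theta|^{-1}$, valid only when $|\theta|\ge 1$. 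The formal-series ansatz simply degenerates as $\theta\to 0$.

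There is also a secondary issue with the Borel step. To make $\|\psi(x/R_j)b_j\|^{k,0}_{\beta,\alpha}\le 2^{-j}|\theta|^{-1}\|a\|^{k,0}_{\beta,\alpha+2}$ you must choose $R_j$ at least of size $(\|a\|^{k,0}_{\beta,\alpha+j}/\|a\|^{k,0}_{\beta,\alpha+2})^{1/j}$, so $R_j$ depends on $a$. That is fine for the bound on $b$ itself, but the compactly supported corrections $(\psi(x/R_j)-1)b_j$ with $3\le j<N$ contribute to $r$ with $(k,-N)$-seminorm of size roughly $R_j^{N-j}\|a\|^{k,0}_{\beta,\alpha+j}$ (since on $|x|\le 2R_j$ one pays $\langle x\rangle^N\lesssim R_j^N$), so the resulting constant $C_{\alpha\beta N}$ in the $r$-estimate depends on $a$ through $R_j$, which the lemma does not allow. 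The paper avoids both problems by splitting into $|\theta|\ge 1$ and $|\theta|\le 1$: for large $\theta$ it inverts $D_x+\theta$ directly on the Fourier side with a cutoff near $\eta=-\theta$ (no infinite series, no Borel sum, uniform constants), and for small $\theta$ it uses the explicit integral $L\tilde a=i\int_0^x e^{i\theta(s-x)}\tilde a(s,\xi)\,ds$, splitting at $|x|\sim|\theta|^{-1}$ and integrating by parts to extract exactly the $|\theta|^{-1}$ loss with two extra $x$-derivatives. You would need to add an argument of this second type to cover the small-$\theta$ regime, and to replace the Borel summation by a construction with $a$-independent constants, before the proof is complete.
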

\begin{proof}

\noindent {\bf{Case 1:}} $|\theta|\geq 1$. Let $\chi \in C_c^\infty(\mathbb{R})$ with $\chi \equiv 1$ on $[-1/3,1/3]$ and $\supp \chi \subset (-1,1)$. Then define
$$
b(x,\xi):=\frac{1}{2\pi}\int e^{i(x-y)\eta}\frac{1-\chi(\theta+\eta)}{\eta+\theta}a(y,\xi)dy d\eta.
$$
where the integral in $y$ interpreted as the Fourier transform. Then, $(D_y+\theta)b-a=r$ where
$$
r(x,\xi):=-\frac{1}{2\pi}\int e^{i(x-y)\eta}\chi(\theta+\eta)a(y,\xi)dy d\eta=-\frac{1}{2\pi}\int e^{i(x-y)\eta}\chi(\theta+\eta)|\eta|^{-N}D_y^Na(y,\xi)dy d\eta
$$
Then, since $\phi_\theta:=\chi(\theta)|\eta-\theta|^{-N}$ is smooth and compactly supported with seminorms bounded uniformly in $|\theta|\geq 1$,
\begin{align*}
| D_x^\alpha D_\xi^\beta r(x,\xi)|&=\Big|-\frac{1}{2\pi}\int e^{i\theta(x-y)}\hat{\phi}_{\theta}(y-x) D_y^{\alpha+N} D_\xi^\beta a(y,\xi)dy \Big|\\
&\leq C_{N,M}\int |\theta|^{-1}\langle x-y\rangle^{-M}\langle y\rangle^{-\alpha-N}\langle \xi\rangle^{k-\beta} \|a\|^{k,0}_{\beta,\alpha+N+1}dy\\
& \leq C|\theta|^{-1}\langle x\rangle^{-\alpha-N}\langle\xi\rangle^{k-\beta}\|a\|^{k,0}_{\beta,\alpha+N+1},
\end{align*}
and
\begin{align*}
|D_x^\alpha D_\xi^\beta b(x,\xi)|&=\Big|\frac{1}{2\pi}\int e^{i(x-y)\eta}\Big(\frac{1-(x-y)D_\eta}{1+|x-y|^2}\Big)^N\frac{1-\chi(\theta+\eta)}{\eta+\theta}\Big(\frac{1 +\eta D_y}{1+|\eta|^2}\Big)^2D_y^\alpha D_\xi^\beta a(y,\xi)dy d\eta\Big|\\
&\leq C_N \Big|\int \langle x-y\rangle^{-N}\langle \eta\rangle^{-2}\langle \eta+\theta\rangle^{-1}\langle \xi\rangle^{k-\beta}\langle y\rangle^{-\alpha}\|a\|_{\beta,\alpha+2}^{k,0}dy d\eta\Big|\\
&\leq C|\theta|^{-1}\langle \xi\rangle^{k-\beta}\langle x\rangle^{-\alpha}\|a\|_{\beta,\alpha+2}^{k,0}.
\end{align*}

\noindent {\bf{Case 2:}} $|\theta|\leq 1$. Define
$
L:S^{k,\ell}\to C^\infty (\mathbb{R}^2)
$
by 
$$
L\tilde{a}:=i\int_{0}^x e^{i\theta (s-x)} \tilde{a}(s,\xi)ds .
$$
Then, 
$
(D_x+\theta)L\tilde{a}=\tilde{a}.
$

Moreover, if $\tilde{a}$ vanishes at $x=0$, then
\begin{align*}
 L\tilde{a}&=i\int_{0}^x [ \frac{D_s}{\theta}e^{i\theta (s-x)}]\tilde{a}(s,\xi)ds=-i\theta^{-1}\int_{0}^x e^{i\theta (s-x)}D_s \tilde{a}(s,\xi)ds +\frac{1}{\theta} \tilde{a}(x,\xi)\\
&=-\theta^{-1}LD_x\tilde{a}+ \theta^{-1}\tilde{a}
\end{align*}
In particular, 
$$
D_xL\tilde{a}=\tilde{a}-\theta L\tilde{a}= LD_x \tilde{a}
$$

Now, suppose that $\tilde{A}\in S^{k,0}$ and $\tilde{a}$ vanishes to infinite order at $x=0$.  Then, for $xr\geq 0$ with $|x|\leq |r|$ 
\begin{align*}
|L\tilde{a}(x,\xi)|&\leq |r|\|\tilde{a}\|^{k,0}_{0,0}\langle \xi\rangle^k
\end{align*}
For $|x|\geq |r|$, 
\begin{align*}
\Big|L\tilde{a}(x,\xi)\Big|&\leq \Big|\int_0^r e^{i\theta s} \tilde{a}(s,\xi)ds\Big|+|\theta|^{-1}\Big(\Big|\int_r^x e^{i\theta s}D_s\tilde{a}(s,\xi)ds\Big|+|\tilde{a}(x,\xi)|+|\tilde{a}(r,\xi)|\Big)\\
&\leq (|r|+2|\theta|^{-1}) \|\tilde{a}\|^{k,0}_{0,0}\langle \xi\rangle^k+|\theta|^{-2}\Big(\Big|\int_r^x e^{i\theta s}D^2_s\tilde{a}(s,\xi)ds\Big|+|D_x\tilde{a}(x,\xi)|+|D_x\tilde{a}(r,\xi)|\Big)\\
&\leq \langle \xi\rangle^k\big( (|r|+2|\theta|^{-1}) \|\tilde{a}\|^{k,0}_{0,0} +|\theta|^{-2}\langle \xi\rangle^k(C\|D_x^2\tilde{a}\|^{k,-2}_{0,0}\langle r\rangle^{-1}+2\|D_x\tilde{a}\|^{k,-1}_{0,0}\langle r\rangle^{-1})\big)
\end{align*}
Optimizing in $r$, we obtain $|r|=|\theta|^{-1}$ and in particular,
$$
\|L\tilde{a}\|_{0,0}^{k,0}\leq C|\theta|^{-1}\|\tilde{a}\|_{0,2}^{k,0}
$$
Therefore, since $D_\xi$ commutes with $L$, if $b\in S^{k,0}$ vanishes to infinite order at $x=0$, we have
$$
\|L \tilde{a}\|^{k,0}_{\beta,0}\leq C|\theta|^{-1}\|\tilde{a}\|^{k,0}_{\beta,2}
$$

Now, consider
$$
D_x L\tilde{a}= \theta^{-1}(-LD_x^2\tilde{a}+D_x\tilde{a}) 
$$
and define 
$$
\tilde{a}_\pm(\xi):=i\int _0^{\pm \infty} e^{i\theta s}D_s^2 \tilde{a}(s)ds.
$$
Arguing as above, we can see that
$$
|\partial_\xi^\beta \tilde{a}_{\pm}(\xi)|\leq C|\theta|\|\tilde{a}\|_{\beta,2}^{k,0}\langle \xi\rangle^{k-\beta}.
$$

Fix $c_{\pm}(x)\in C_c^\infty $ such that $\int c_{\pm}dx=1$, $\supp c_\pm \subset \{\pm x>0\}$. Then, 
$$
\int e^{i\theta s} D_s^2c_{\pm}(s)ds\geq c|\theta|^2,
$$
and putting $\tilde{a}_{mod}(x,\xi)=\tilde{a}(x,\xi)-\frac{c_+(x)\tilde{a}_+(\xi)}{\int e^{i\theta s} D_s^2c_{+}(s)ds}-\frac{c_-(x)\tilde{a}_-(\xi)}{\int e^{i\theta s} D_s^2c_{-}(s)ds}$,
we have 
$$
\int_0^\infty e^{i\theta s} D_s^2D_\xi^\beta \tilde{a}_{mod}(x,\xi)ds=\int_0^{-\infty}e^{i\theta s} D_s^2D_\xi^\beta\tilde{a}_{mod}(s,\xi)ds=0.
$$
Moreover, since $\tilde{a}_{mod}$ vanishes to infinite order at $0$, we can integrate by parts to see that
$$
\int_0^\infty e^{i\theta s}D_s^kD_\xi^\beta \tilde{a}_{mod}ds=\int_0^{-\infty}e^{i\theta s} D_s^kD_\xi^\beta\tilde{a}_{mod}ds=0,\qquad k\geq 2.
$$

Finally, note that for $\alpha\geq 1$,
$$
D_x^\alpha L\tilde{a}_{mod}=\theta^{-1}(-LD_x^{\alpha+1}\tilde{a}_{mod}+D_x^\alpha\tilde{a}_{mod})
$$
and we have 
$$
\Big|\int_0^x e^{is\theta}D_s^{\alpha+1}D_\xi^\beta\tilde{a}_{mod}ds\Big|=\Big|\int_x^{\sgn x \infty }e^{is\theta}D_s^{\alpha+1}D_\xi^\beta\tilde{a}_{mod}ds\Big|\leq C_N\|\tilde{a}\|_{\beta,\alpha+1}^{k.0}\langle x\rangle^{-\alpha}\langle \xi\rangle^{k-\beta}.
$$

To complete the proof we let $\chi \in C_c^\infty(\mathbb{R})$ with $\chi \equiv 1$ near $0$ and put $\tilde{a}=(1-\chi(x))a(x,\xi)$, $b=L\tilde{a}_{mod}.$

\end{proof}

Next, we need a lemma which controls scattering symbols after conjugation by $e^{i\theta x}$.

\begin{lemma}
\label{l:conj}
Suppose that $B\in \Psi^{n,m}$ and $\theta \in \mathbb{R}$, $|\theta|\leq Ch^{-1}$. Then, there is $B_\theta\in \Psi^{n,m}$ such that 
$$
e^{i\theta x}Be^{-i\theta x}=B_{\theta}.
$$
and $\WFs(B_\theta)=\WFs(B).$ Moreover, if $B=b(x,hD)$, then $B_\theta=b_\theta(x,hD)$ where
$$
b_\theta(x,\xi) =b(x,\xi-h\theta)\sim \sum_{j=0}^\infty \frac{h^j(-1)^j}{j!}\langle \theta,\partial_\xi\rangle^j b.
$$
In particular, 
$$
\|b-b_{\theta}\|^{n-1,m}_{\alpha,\beta}\leq \|b\|^{n,m}_{\alpha,\beta+1}h|\theta|\langle h|\theta|\rangle^{n-|\beta|-1}.
$$
\end{lemma}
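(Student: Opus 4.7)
The identity $b_\theta(x,\xi)=b(x,\xi-h\theta)$ comes from a direct oscillatory-integral computation: for $u\in\mathcal{S}(\mathbb{R})$,
$$
e^{i\theta x}\,b(x,hD)\bigl(e^{-i\theta\cdot}u\bigr)(x)=\frac{1}{2\pi h}\int e^{\frac{i}{h}(x-y)\xi}e^{i\theta(x-y)}b(x,\xi)u(y)\,dy\,d\xi,
$$
and the change of variables $\xi\mapsto\xi-h\theta$ turns this into $b(x,hD-h\theta)u(x)$. Since $|h\theta|\leq C$, the weights $\langle\xi\rangle$ and $\langle\xi-h\theta\rangle$ are equivalent uniformly in $h,\theta$, so $b_\theta\in S^{n,m}$ whenever $b\in S^{n,m}$, and $B_\theta\in\Psi^{n,m}$. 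Polyhomogeneity is preserved term by term since each $b_j(x,\xi-h\theta)$ lies in the same class as $b_j$.

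For $\WFs(B_\theta)=\WFs(B)$, I would first check that $e^{i\theta x}\colon H_h^{s,k}\to H_h^{s,k}$ is bounded with norm $\lesssim\langle h\theta\rangle^{|s|}$, since each commutator of $hD_x$ with $e^{i\theta x}$ produces a factor $h\theta$. Consequently conjugation by $e^{i\theta x}$ preserves $O(h^\infty)_{H_h^{-N,-N}\to H_h^{N,N}}$ estimates uniformly in the regime $|h\theta|\leq C$. Since $\sigma(A_\theta)(x,\xi)=\sigma(A)(x,\xi-h\theta)$ and a bounded shift in $\xi$ does not affect the behavior of a symbol at either fiber or spatial infinity, $\Ells(A_\theta)=\Ells(A)$. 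Combining these facts with the composition identity $(AB)_\theta=A_\theta B_\theta$ and applying the defining characterization of $\WFs$ in both directions gives the wavefront set equality.

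The asymptotic expansion is Taylor's theorem applied to $b(x,\xi-h\theta)$ in the fiber variable,
$$
b(x,\xi-h\theta)=\sum_{j=0}^{N-1}\frac{(-h\theta)^j}{j!}\partial_\xi^j b(x,\xi)+\frac{(-h\theta)^N}{(N-1)!}\int_0^1(1-s)^{N-1}\partial_\xi^N b(x,\xi-sh\theta)\,ds,
$$
which exhibits the $j$-th term at order $h^j$ with a gain of $j$ in the fiber index, and a remainder in $h^N S^{n-N,m}$. For the quantitative seminorm bound, I would apply the integral form with $N=1$,
$$
(b-b_\theta)(x,\xi)=h\theta\int_0^1\partial_\xi b(x,\xi-sh\theta)\,ds,
$$
differentiate $\alpha$ times in $x$ and $\beta$ times in $\xi$, apply the defining bound $\|b\|^{n,m}_{\alpha,\beta+1}$ to the integrand, and absorb the shift using $\langle\xi-sh\theta\rangle\lesssim\langle\xi\rangle\langle h\theta\rangle$ (Peetre's inequality) to produce the factor $\langle h|\theta|\rangle^{n-|\beta|-1}$.

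The step requiring the most care is the wavefront set equality, because $e^{i\theta x}$ is not itself an element of the scattering calculus and $|h\theta|$ need not tend to zero. Once one verifies the uniform boundedness of $e^{i\theta x}$ on scattering Sobolev spaces in this regime, however, the rest is routine: everything reduces to the composition identity and the manifest invariance of ellipticity and decay under bounded shifts of $\xi$.
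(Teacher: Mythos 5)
Your proposal is correct and follows essentially the same route as the paper: the change of variables $\xi\mapsto\xi-h\theta$ in the quantization formula, the weight equivalence $\langle\xi-h\theta\rangle\asymp\langle\xi\rangle$ under the hypothesis $|h\theta|\leq C$ (Peetre's inequality) to show membership in $S^{n,m}$, Taylor's theorem for the expansion, and the first-order integral remainder for the quantitative seminorm bound. The one place you add detail the paper leaves implicit is the wavefront-set equality: the paper simply states it without comment, whereas you spell out the boundedness of $e^{i\theta x}$ on scattering Sobolev spaces and the invariance of $\Ells$ under bounded $\xi$-shifts, both of which are indeed the facts one would cite to justify it.
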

\begin{proof}
Write 
$
B=b(x,hD)+O(h^\infty)_{\Psi^{-\infty,-\infty}}.
$
Then, 
$$
e^{i\theta x}b(x,hD)e^{-i\theta x}=b_\theta (x,hD),\qquad b_\theta(x,\xi)=b(x,\xi-h\theta).
$$
Now, 
\begin{align*}
|\partial_x^\alpha \partial_\xi ^\beta b_\theta (x,\xi) |&=\Big|\partial_y^\alpha \partial_\eta^\beta b(y,\eta)_{y=x,\eta=\xi+h\theta}\Big|\leq C_{\alpha \beta}\langle x\rangle^{m-|\alpha|}\langle \xi-h\theta\rangle^{n-|\beta|}\\
&\leq C_{\alpha \beta}\langle h\theta\rangle^{n-|\beta|}\langle x\rangle^{m-|\alpha|}\langle \xi\rangle^{n-|\beta|}\leq \tilde{C}_{\alpha\beta}\langle x\rangle^{m-|\alpha|}\langle \xi\rangle^{n-|\beta|}
\end{align*}
and the first part of the lemma follows from Taylor's theorem.

Note also that 
\begin{align*}
\partial_{x}^\alpha\partial_\xi^\beta(b(x,\xi)-b_\theta(x,\xi)) &=h\int_{0}^1 -\langle \partial_x^\alpha\partial^{\beta+1}_{\xi}b(x,\xi-th\theta),\theta\rangle dt\\
&\leq \|b\|^{n,m}_{\alpha,\beta+1}h|\theta|\langle \xi\rangle^{n-|\beta|-1}\langle x\rangle^{m-|\alpha|}\langle h\theta\rangle^{n-|\beta|-1}
\end{align*}
\end{proof}

\begin{lemma}
\label{l:commute}
Suppose that $\theta_1,\theta_2\in B(0,Dh^{-1})$ and that $a \in S^{m_1,n_1}$ and $b\in S^{m_2,n_2}$. Then, 
$$
h^{-1}[e^{i\theta_1 x}Op_h(a),e^{i\theta_2x}Op_h(b)]=e^{i(\theta_1+\theta_2)x}(h^{-1}[Op_h(a),Op_h(b)]+|\theta|c_2(x,hD))
$$
where the map $L:S^{m_1,n_1}\times S^{m_2,n_2}\to S^{m_1+m_2-1,n_1+n_2}$, $(a,b)\mapsto c_2$ is bounded uniformly in $h$ with bound depending only on the constant $D$.
\end{lemma}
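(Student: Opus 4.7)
The plan is to reduce everything to Lemma~\ref{l:conj} plus a one-term Taylor expansion in $\xi$. First I would pull out the exponential on the left by writing
\begin{equation*}
e^{i\theta_1 x}\Oph(a)\cdot e^{i\theta_2 x}\Oph(b) \;=\; e^{i(\theta_1+\theta_2)x}\cdot\bigl(e^{-i\theta_2 x}\Oph(a) e^{i\theta_2 x}\bigr)\Oph(b) \;=\; e^{i(\theta_1+\theta_2)x}\Oph(\tilde a)\Oph(b),
\end{equation*}
where Lemma~\ref{l:conj} (applied with the phase $-\theta_2$) gives $\tilde a(x,\xi)=a(x,\xi+h\theta_2)$. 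A symmetric factorization on the right gives $e^{i\theta_2 x}\Oph(b)\cdot e^{i\theta_1 x}\Oph(a) = e^{i(\theta_1+\theta_2)x}\Oph(\tilde b)\Oph(a)$ with $\tilde b(x,\xi)=b(x,\xi+h\theta_1)$. Subtracting and dividing by $h$,
\begin{equation*}
h^{-1}[e^{i\theta_1 x}\Oph(a),\, e^{i\theta_2 x}\Oph(b)] = e^{i(\theta_1+\theta_2)x}\!\left(h^{-1}[\Oph(a),\Oph(b)] + R\right), \quad R := h^{-1}\Oph(\tilde a - a)\Oph(b) - h^{-1}\Oph(\tilde b - b)\Oph(a).
\end{equation*}

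Next I would use the fundamental theorem of calculus,
\begin{equation*}
\tilde a(x,\xi)-a(x,\xi) \;=\; h\theta_2\int_0^1 \partial_\xi a(x,\xi+th\theta_2)\,dt,
\end{equation*}
and similarly for $\tilde b - b$. Since the hypothesis $|\theta_j|\leq Dh^{-1}$ forces $|h\theta_j|\leq D$, Peetre's inequality yields $\langle \xi+th\theta_j\rangle^{s}\leq C_{s,D}\langle \xi\rangle^{s}$ uniformly, and so
\begin{equation*}
\|\tilde a - a\|_{\alpha,\beta}^{m_1-1,\,n_1} \leq C\,h|\theta_2|\,\|a\|_{\alpha,\beta+1}^{m_1,n_1}, \qquad \|\tilde b - b\|_{\alpha,\beta}^{m_2-1,\,n_2} \leq C\,h|\theta_1|\,\|b\|_{\alpha,\beta+1}^{m_2,n_2},
\end{equation*}
with constants depending only on $D$ and the multi-indices. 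This is precisely the same gain recorded in Lemma~\ref{l:conj}, so no new work beyond invoking that lemma is needed.

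Finally, the continuity of the composition map $\Psi^{m_1-1,n_1}\times\Psi^{m_2,n_2}\to \Psi^{m_1+m_2-1,n_1+n_2}$ (and similarly for the other product) shows that $h^{-1}\Oph(\tilde a-a)\Oph(b)$ lies in $|\theta_2|\,\Psi^{m_1+m_2-1,n_1+n_2}$ uniformly in $h$, and analogously $h^{-1}\Oph(\tilde b-b)\Oph(a)\in |\theta_1|\,\Psi^{m_1+m_2-1,n_1+n_2}$. Setting $|\theta|:=|\theta_1|+|\theta_2|$, we may write $R = |\theta|\,c_2(x,hD)$ with $c_2\in S^{m_1+m_2-1,n_1+n_2}$ depending bilinearly and continuously on $(a,b)$, and uniformly in $h$ (and in $\theta_1,\theta_2$ subject to $|h\theta_j|\le D$). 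The main bookkeeping obstacle — and essentially the only one — is verifying that shifting $\xi\mapsto \xi+th\theta_j$ does not ruin the $\xi$-seminorm estimates when $m_i-|\beta|-1<0$; this is handled cleanly by Peetre thanks to $|h\theta_j|\le D$, which is why the bound depends only on $D$.
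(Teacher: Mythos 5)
Your proposal is correct and follows essentially the same route as the paper: factor out $e^{i(\theta_1+\theta_2)x}$, recognize the remaining factors as the conjugated symbols $a_{-\theta_2}$, $b_{-\theta_1}$ from Lemma~\ref{l:conj}, and bound $h^{-1}(a_{-\theta_2}-a)$, $h^{-1}(b_{-\theta_1}-b)$ by $|\theta_2|\,S^{m_1-1,n_1}$, $|\theta_1|\,S^{m_2-1,n_2}$ via the Taylor/FTC estimate already recorded there (the $\langle h\theta\rangle$ factor being controlled by $D$). The paper's proof is exactly this two-line computation followed by an appeal to Lemma~\ref{l:conj}.
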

\begin{proof}
Note that 
\begin{align*}
[e^{i\theta_1 x}Op_h(a),e^{i\theta_2x}Op_h(b)]&=e^{i(\theta_1+\theta_2)x}(Op_h(a_{-\theta_2})Op_h(b)-Op_h(b_{-\theta_1})Op_h(a))\\
&=e^{i(\theta_1+\theta_2)x}([Op_h(a),Op_h(b)]+(Op_h(a_{-\theta_2}-a))Op_h(b)\\
&\qquad-(Op_h(b_{-\theta_1}-b))Op_h(a))).
\end{align*}
We now apply Lemma~\ref{l:conj} to finish the proof.
\end{proof}

Using Lemma~\ref{l:commute}, we can see that if $\Theta_1,\Theta_2\subset B(0,Dh^{-1})$
\begin{equation}
\label{e:conjugateForm}
G=\sum_{\theta\in \Theta_1} e^{i\theta x}g_\theta(x,hD),\quad g_\theta \in S^{m_1,n_1}\qquad B=\sum_{\theta\in \Theta_2}e^{i\theta x}b_\theta(x,hD),\quad b_\theta\in S^{m_2,n_2}
\end{equation}
 then,
 $$
 h^{-1}[G,B]= \sum_{\theta_i \in \Theta_1
 ,\theta_j\in \Theta_2}e^{i(\theta_1+\theta_2)x}\tilde{g}_{\theta_1,\theta_2}(x,hD)
 $$
where, for all $m_i,n_i$, $i=1,2$ and $\alpha,\beta\in \mathbb{N}$, there are $K,C>0$ such that 
$$
\|\tilde{g}_{\theta_1,\theta_2}\|^{m_1+m_2-1,n_1+n_2}_{\alpha\beta}\leq C(1+\max(|\theta_1|,|\theta_2|)\|g_{\theta_1}\|_{\beta+K,\alpha+K}^{m_1,n_1}\|b_{\theta_2}\|_{\beta+K,\alpha+K}^{m_2,n_2}
$$

Thus, applying Lemma~\ref{l:conjugate}, we have the following lemma:
\begin{lemma}
\label{l:conjugateFull}
Let  $G\in \Psi^{-\infty}$ self-adjoint and $B$ are as in~\eqref{e:conjugateForm} with $m_1=m_2=-
\infty$ and $n_1=n_2=0$. Then,
$$
e^{iG}Be^{-iG}=B+\sum_{j=1}^{k-1}\sum_{\substack{{\bf\Phi}\in \Theta_1^j\\\theta\in \Theta_2}}h^je^{i(\sum_{i=1}^{j}{\bf{\Phi}_i}+\theta)x} \tilde{g}_{{\bf\Phi},\theta} +O(h^{k})_{H_h^{-N}\to H_h^{N}}
$$
where for any $\sum_{i=0}^j N_i=N$, $\alpha,\beta$ there are $K$ and $C_{N\alpha\beta j}$ such that
$$
\|\tilde{g}_{{\bf\Phi},\theta}\|^{-N,0}_{\beta,\alpha}\leq C_{j\alpha\beta}(1+|\theta|)\|b_{\theta}\|_{\beta+K,\alpha+K}^{-N_0,0}\prod_{i=1}^j(1+|{\bf\Phi}_i|)\|g_{{\bf{\Phi}}_i}\|_{\beta+K,\alpha+K}^{-N_i,0}.
$$
\end{lemma}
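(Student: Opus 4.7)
The plan is to combine Lemma~\ref{l:conjugate} with an induction based on Lemma~\ref{l:commute} (and, more precisely, the displayed estimate following it).

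First, since $G\in \Psi^{-\infty}$ and $B\in\Psi^{-\infty}$, Lemma~\ref{l:conjugate} applied with $P=B$ gives
\begin{equation*}
e^{iG}Be^{-iG}=\sum_{j=0}^{k-1}\frac{i^j}{j!}\ad_G^jB+O(h^k)_{H_h^{-N}\to H_h^{N}}
\end{equation*}
for any $N$, where we have freely used that the order of $B$ can be taken arbitrarily negative to absorb the $H_h^s\to H_h^{s+k-m}$ remainder into $H_h^{-N}\to H_h^N$.

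Second, I will show by induction on $j\ge 0$ that
\begin{equation*}
\ad_G^jB=h^j\sum_{\substack{{\bf\Phi}\in\Theta_1^j\\\theta\in\Theta_2}} e^{i(\sum_{i=1}^j{\bf\Phi}_i+\theta)x}\,\tilde g_{{\bf\Phi},\theta}(x,hD)
\end{equation*}
with $\tilde g_{{\bf\Phi},\theta}\in S^{-\infty,0}$ satisfying the claimed seminorm bound. The base case $j=0$ is immediate with $\tilde g_\theta=b_\theta$. For the inductive step, write $\ad_G^{j+1}B=[G,\ad_G^jB]$, expand the outer $G=\sum_{{\bf\Phi}_{j+1}\in\Theta_1}e^{i{\bf\Phi}_{j+1}x}g_{{\bf\Phi}_{j+1}}(x,hD)$, and apply the formula displayed just after Lemma~\ref{l:commute} to each term. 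This multiplies $h^j$ by an extra factor of $h$, appends the new phase $e^{i{\bf\Phi}_{j+1}x}$, and, by the bound following Lemma~\ref{l:commute}, yields a symbol bounded by
\begin{equation*}
\bigl\|\tilde g_{{\bf\Phi},\theta}^{(j+1)}\bigr\|^{-N,0}_{\beta,\alpha}\le C_{j\alpha\beta}(1+|{\bf\Phi}_{j+1}|)\,\|g_{{\bf\Phi}_{j+1}}\|^{-N_{j+1},0}_{\beta+K',\alpha+K'}\,\bigl\|\tilde g^{(j)}_{{\bf\Phi}',\theta}\bigr\|^{-N',0}_{\beta+K',\alpha+K'},
\end{equation*}
with $N=N'+N_{j+1}$ available since $G\in\Psi^{-\infty}$. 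Iterating the inductive hypothesis to distribute the total negative order $N=\sum_{i=0}^j N_i$ among $b_\theta$ and the $g_{{\bf\Phi}_i}$'s yields the product bound in the statement.

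Third, the convergence of the sums over $\Theta_1^j\times \Theta_2$ is not needed here because the claim is a symbolic identity term by term; once Lemma~\ref{l:conjugate} is in place, finiteness of the outer sum $j=1,\dots,k-1$ means we only iterate Lemma~\ref{l:commute} finitely many times, so the constants $C_{j\alpha\beta}$ and the seminorm shifts $K$ can be chosen uniformly for each fixed $j$. The main bookkeeping obstacle is the growth of the seminorm indices $(\alpha+K,\beta+K)$ with each application of Lemma~\ref{l:commute}; this is harmless because the induction runs for at most $k-1$ steps and $K$ depends only on the calculus, not on the specific $\theta_i$ or $\mathbf{\Phi}_i$. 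Putting the expansion of $e^{iG}Be^{-iG}$ together with the inductively established form of $\ad_G^jB$ and absorbing $i^j/j!$ into $\tilde g_{{\bf\Phi},\theta}$ completes the proof.
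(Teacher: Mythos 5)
Your proposal is correct and follows essentially the same route as the paper: the paper obtains Lemma~\ref{l:conjugateFull} precisely by combining the Taylor expansion of Lemma~\ref{l:conjugate} with the iterated application of Lemma~\ref{l:commute} (via the displayed formula and seminorm bound for $h^{-1}[G,B]$ that immediately precede the lemma). The bookkeeping points you flag — fixed finite number of iterations, uniform seminorm shifts $K$, and deferring summability over $\Theta_1^j\times\Theta_2$ to the later application — are exactly how the paper treats them.
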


\subsection{The gauge transform}

We are now in a position to prove the inductive lemma used for gauge transformation.
\begin{lemma}
\label{l:gauge}
Suppose that $0<a<b$ and 
$$
\WF\big(\tilde{P}-(P_0+hQ_k+h^{1+k}W_k+h^NR_k\big)\cap \{|\xi|\in [a,b]\}=\emptyset
$$
where $Q_k\in \Psi^{-\infty,0}$, $R_k\in \Psi^{-\infty}$, 
$$
W_k=\sum_{\theta \in \Theta \setminus 0} e^{i\theta x}w_{\theta,k}(x,hD)
$$
with $\{w_{\theta,k}\}_{\theta\in \Theta}$ satisfying~\eqref{e:estimates} and $W_k,Q_k$ self adjoint.
Then there is $G\in h^{-\delta+k(1-\delta)}S^{-\infty,0}$ self adjoint such that 
$$
\WF\big(\tilde{P}_G-(P_0+hQ_{k+1}+h^{1+k+1}W_{k+1} +h^{N}R_{k+1})\big)\cap \{|\xi|\in [a,b]\}=\emptyset
$$
where $R_{k+1}\in \Psi^{-\infty}$, 
$$
Q_{k+1}=Q_k+h^{k+1}\tilde{Q}_k\in \Psi^{-\infty,0}
$$
with $\tilde{Q}_k$ self adjoint and $W_{k+1}$ is self adjoint with
$$
 W_{k+1}=\sum_{\theta\in \Theta_{\lceil \frac{N}{k+1}-1\rceil}\setminus 0}e^{i\theta x}w_{\theta,k+1}(x,hD)
$$ 
satisfies~\eqref{e:estimates} with  $\Theta,$ replaced by
$
\Theta_{\lceil \tfrac{N}{k+1}-1\rceil}.
$
\end{lemma}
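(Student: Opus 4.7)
The proof is by the standard cohomological gauge-transform step: choose $G$ so that $i[G,P_0]$ cancels $h^{1+k}W_k$ on $|\xi|\in[a,b]$ modulo one extra power of $h$, while preserving the almost-periodic structure. I fix $\chi\in C_c^\infty(\mathbb{R})$ that equals $1$ on a neighborhood of $\{|\xi|\in[a,b]\}$ and vanishes near $\xi=0$, and seek
\[
G \;=\; \sum_{\theta\in\Theta\setminus 0} e^{i\theta x}\,g_\theta(x,hD),\qquad g_\theta\in S^{-\infty,0}.
\]
A direct symbol computation shows that the leading symbol of $i[e^{i\theta x}g_\theta(x,hD),P_0]$ equals $-2ih\xi\,e^{i\theta x}(D_x+\theta)g_\theta$, so the cancellation reduces to the transport equation
\[
(D_x+\theta)\,g_\theta \;=\; \tfrac{-ih^k\chi(\xi)}{2\xi}\,w_{\theta,k}(x,\xi) \pmod{S^{-\infty,-\infty}},
\]
which is solved by Lemma~\ref{l:integrate} with $\|g_\theta\|_{\semi}\lesssim h^k|\theta|^{-1}\|w_{\theta,k}\|_{\semi'}$.

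Next, I sum over $\theta$. The factor $|\theta|^{-1}\|w_{\theta,k}\|_{\semi'}$ is precisely $s_{1,\semi'}(\theta,\mathcal{W}_k)$ up to constants, so the inductive hypothesis~\eqref{e:estimates} on $\mathcal{W}_k$ guarantees absolute convergence of the series in every $S^{-\infty,0}$ seminorm, and the overall scaling $h^{-\delta+k(1-\delta)}$ is tracked through the induction by the size of $w_{\theta,k}$ from the previous step (the $h^{-\delta}$ loss reflecting the small-divisor price paid in the cohomological solution). Symmetrizing $G\mapsto \tfrac12(G+G^*)$ via Lemma~\ref{l:conj} keeps $G$ in the same class while making it self-adjoint.

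With $G$ chosen, I expand
\[
e^{iG}\tilde P e^{-iG} \;=\; \tilde P + i[G,P_0] + i[G,hQ_k + h^{1+k}W_k] + \sum_{j\geq 2}\tfrac{i^j}{j!}\ad_G^j\tilde P + O(h^N)_{H_h^{-N}\to H_h^N}
\]
using Lemma~\ref{l:conjugate}. The cohomological choice eliminates $i[G,P_0]+h^{1+k}W_k$ up to $h^{2+k}$ errors and terms microlocalized outside $\{|\xi|\in[a,b]\}$ that are absorbed into $R_{k+1}$. The remaining pieces (the commutator with $hQ_k$, the subleading part of the cohomological step, and the higher iterated brackets) are re-expanded by Lemma~\ref{l:conjugateFull} into contributions of the form $h^{2+k}e^{i\Phi x}\tilde w_\Phi(x,hD)$ with $\Phi=\theta_1+\cdots+\theta_j$, $\theta_i\in\Theta$. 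The $\Phi=0$, self-adjoint, diagonal pieces are collected into $h^{1+k}\tilde Q_k$ to form $Q_{k+1}=Q_k+h^{1+k}\tilde Q_k$, and the $\Phi\neq 0$ contributions make up $h^{2+k}W_{k+1}$.

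The main technical hurdle is verifying that $W_{k+1}$ satisfies the almost-periodic bound~\eqref{e:estimates} on the enlarged frequency set $\Theta_{\lceil N/(k+1)-1\rceil}$. The truncation depth arises because the $j$th iterated bracket contributes a factor $h^{j(k-(k+1)\delta)}$, so only $j\lesssim N/(k+1)$ brackets produce terms that cannot be absorbed into $R_{k+1}$. For each such $j$, Lemma~\ref{l:conjugateFull} writes $\tilde w_\Phi$ as a sum over decompositions $\Phi=\theta_1+\cdots+\theta_j$ of products of $g_{\theta_i}$'s and $w_{\theta,k}$'s, and Lemma~\ref{l:integrate} equips each $g_{\theta_i}$ with the small-denominator factor $|\theta_i|^{-1}$. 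This is exactly the combinatorial pattern encoded in $s_{j,\semi}$, so Lemma~\ref{l:induct} bounds the new seminorms by $s_{jk,\semi'}((\theta_1,\ldots,\theta_j),\mathcal W)$, and~\eqref{e:estimates} for $\mathcal{W}$ closes the induction. Self-adjointness of $W_{k+1}$ and $\tilde Q_k$ is inherited from that of $G$, $W_k$, $Q_k$ via the alternating-sign structure of the Baker--Campbell--Hausdorff expansion.
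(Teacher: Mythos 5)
Your proposal is correct and follows essentially the same route as the paper: solve the transport equation $(D_x+\theta)g_\theta\approx iw_{\theta,k}\chi_1(|\xi|)/2\xi$ frequency-by-frequency via Lemma~\ref{l:integrate}, build $G=h^k\sum_\theta e^{i\theta x}g_\theta(x,hD)$, expand $e^{iG}\tilde P e^{-iG}$ using Lemma~\ref{l:conjugateFull}, split the resulting $e^{i\Phi x}$-terms according to $\Phi=0$ versus $\Phi\neq 0$ to form $Q_{k+1}$ and $W_{k+1}$, and close the estimates with Lemma~\ref{l:induct}. Two small remarks: the paper makes $G$ self-adjoint by adjusting lower-order terms in $g_\theta$ rather than symmetrizing $G\mapsto\tfrac12(G+G^*)$ (either works here since $W_k$ self-adjoint already forces $e^{i\theta x}g_\theta+e^{-i\theta x}g_{-\theta}$ self-adjoint to leading order), and your gloss that the $h^{-\delta+k(1-\delta)}$ in the lemma statement reflects a ``small-divisor price'' is not what happens --- the small divisors are fully controlled by the admissibility/summability hypothesis and the paper's proof actually yields the sharper $G\in h^kS^{-\infty,0}$; the weaker power in the statement is just slack.
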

\begin{proof}
Let $\chi \in C_c^\infty(0,\infty)$ such that $\chi \equiv 1$ near $[1/2,2]$ and 
$$
\tilde{P}\chi(|hD|)=(P_0+hQ_k+h^kW_k-h^NR_k)\chi(|hD|)+O(h^\infty)_{\Psi^{-\infty}}
$$
We aim to use the fact that $P_0$ dominates $\tilde{P}$ to conjugate away $W_k$.
Therefore, we look for $G$ such that, modulo lower order terms,
$$
ih^{-1-k}[P_0,G]=W_k.
$$
To do this, we solve
$$
2\xi\partial_x g= \sigma_{-\infty}(W_k\chi(|hD|)).
$$
Now, 
$$
W_k\chi(|hD|)=\sum_{\theta\in \Theta\setminus \{0\}}e^{i\theta x} (w_{\theta}\chi(|\xi|))(x,hD)
$$
where $w_{\theta}\in S^{-\infty, 0}$ satisfy~\eqref{e:estimates}. Let $\chi_i\in C_c^\infty(0,\infty)$, $i=1,2$, such that $\chi_1,\chi_2\equiv 1$ near $[a,b]$ and $\supp\chi_2\subset \supp \chi_1\subset \supp \chi$. By, Lemma~\ref{l:integrate}, there is $g_\theta\in S^{-\infty, 0}$ such that 
$$
(D_x+\theta)g_\theta(x,\xi)-iw_{\theta,k}\chi_1(|\xi|)/2\xi\in S^{-\infty,-\infty},\qquad
\|g_\theta\|^{-N,0}_{\beta,\alpha} \leq C_{\alpha\beta N}|\theta|^{-1}\|w_{\theta}\chi_1\|^{-N,0}_{\beta,\alpha+2}.
$$

Modifying lower order terms in $g_{\theta}$ to make $e^{i\theta x} g_\theta+e^{-i\theta x}g_{-\theta}$ self adjoint, we put
$$
G:=h^{k}\sum_{\theta\in \Theta\setminus \{0\}} e^{i\theta x}g_\theta(x,hD).
$$
Then,  $G\in h^{k}S^{-\infty}$, and, letting $\tilde{k}=(k+1)$, by Lemma~\ref{l:conjugateFull}, for any $N_1$
\begin{align*}
\chi_2(|hD|)\tilde{P}_G&=\chi_2(|hD|)(P_0+hQ_k)+\sum_{j=2}^{N_1-1}\sum_{\substack{{\bf\Phi}\in \Theta^j}}e^{i(\sum_{i=1}^{j}{\bf{\Phi}_i})x} h^{j\tilde{k}}\tilde{g}^1_{{\bf\Phi}} (x,hD)\\
&+\sum_{j=1}^{N_1-1}\sum_{{\bf\Phi}\in \Theta^{j}}h^{j\tilde{k}+1}e^{i(\sum_{i=1}^{j}{\bf{\Phi}_i})x} \tilde{g}^2_{{\bf\Phi}}(x,hD)+O(h^{N_1\tilde{k}})_{H_h^{-N}\to H_h^{N}} +O(h^N)_{H_h^{-N}\to H_h^N}
\end{align*}
where for ${\bf\Phi}\in \Theta^n$,
$$
\|\tilde{g}^\ell_{{\bf\Phi}}\|^{-N,0}_{\alpha\beta}\leq C_{j\alpha\beta N}(1+\|Q_k\|_{\alpha+K,\beta+K}^{-N,0})\prod_{i=1}^{n}(1+|{\bf\Phi}_i|)|{\bf{\Phi}}_i|^{-1}\|w_{{\bf{\Phi}}_i}\chi_1\|_{\beta+K,\alpha+K+2}^{-N,0}.
$$

In particular, putting $N_1=\lceil \frac{N}{k+1}\rceil$, and
$$
W_{k+1}=\sum_{j=2}^{N_1-1}\sum_{\substack{{\bf\Phi}\in \Theta^j\\\sum{\bf{\Phi}}_i\neq0}}e^{i(\sum_{i=1}^{j}{\bf{\Phi}_i})x} h^{j\tilde{k}}\tilde{g}^1_{{\bf\Phi}} (x,hD)\\+\sum_{j=1}^{N_1-1}\sum_{\substack{{\bf\Phi}\in \Theta^{j}\\\sum{\bf{\Phi_i}}\neq 0}}h^{j\tilde{k}+1}e^{i(\sum_{i=1}^{j}{\bf{\Phi}_i})x} \tilde{g}^2_{{\bf\Phi}}(x,hD)
$$
and 
$$
Q_{k+1}=Q_k+\sum_{j=2}^{N_1-1}\sum_{\substack{{\bf\Phi}\in \Theta^j\\\sum{\bf{\Phi}}_i=0}}h^{j\tilde{k}}\tilde{g}^1_{{\bf\Phi}} (x,hD)\\+\sum_{j=1}^{N_1-1}\sum_{\substack{{\bf\Phi}\in \Theta^{j}\\\sum{\bf{\Phi_i}}=0}}h^{j\tilde{k}+1} \tilde{g}^2_{{\bf\Phi}}(x,hD)
$$
we have by Lemma~\ref{l:induct} that $W_{k+1}$ satisfies~\eqref{e:estimates} with  $\Theta$ replaced by
$\Theta=\Theta_{\lceil N/(k+1)-1\rceil}.$
\end{proof}

The following is now an immediate corollary of the previous lemma
\begin{corollary}
\label{c:gauge}
Let $P=-h^2\Delta+hW$ where $W$ is admissible and $0<a<b$. Then for all $N$ there is $G\in \Psi^0$ self-adjoint such that 
$$
e^{iG}Pe^{-iG}= -h^2\Delta +hQ+(1-\chi(h^2\Delta-1))h\tilde{W}(1-\chi(h^2\Delta-1))+O(h^N)_{\Psi^{-\infty}}
$$
where $Q\in \Psi^{-\infty,0}$, $\tilde{W}\in \Psi^1$, are self adjoint,  and $\chi \in C_c^\infty $ with $\chi \equiv 1$ on $[a,b]$.
\end{corollary}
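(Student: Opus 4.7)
The plan is to iterate Lemma~\ref{l:gauge} a finite number of times and then assemble the successive gauge transforms into a single self-adjoint $G \in \Psi^0$.

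\textbf{Base case and iteration.} First, I would observe that $P = P_0 + hW$ with $P_0 = -h^2\Delta$ satisfies the hypothesis of Lemma~\ref{l:gauge} at the base case $k=0$, taking $Q_0 = 0$, $W_0 = W$ (admissible by assumption), and $R_0 = 0$. The microlocalization restriction near $|\xi| \in [a,b]$ holds vacuously here; the part of $P$ microlocally supported away from $|\xi| \in [a,b]$ will be untouched by the gauge transforms and ultimately absorbed into the $\tilde{W}$ term. Fixing $0 < \delta < 1/2$, I would apply the lemma iteratively: at step $k$, it yields a self-adjoint $G_k \in h^{-\delta + k(1-\delta)} S^{-\infty,0}$ such that $e^{iG_k}$-conjugation replaces the oscillatory remainder $h^{1+k}W_k$ with $h^{2+k}W_{k+1}$ (now supported over the enlarged frequency set $\Theta_{\lceil N/(k+1)-1\rceil}$), while updating $Q_{k+1} = Q_k + h^{k+1}\tilde{Q}_k$ with $\tilde{Q}_k$ self-adjoint. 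I would iterate $N_1$ times, with $N_1$ chosen so that $N_1(1-\delta) - \delta \geq N$; then the oscillatory remainder drops to $O(h^N)_{\Psi^{-\infty}}$.

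\textbf{Packaging the gauge transforms.} Next, I would combine the $G_k$ via the Baker-Campbell-Hausdorff identity applied to $e^{iG_{N_1-1}}\cdots e^{iG_0}$, formally
\[
G = \sum_{k=0}^{N_1-1} G_k + \tfrac{i}{2}\!\sum_{0 \leq j < k \leq N_1 - 1}\![G_j, G_k] + \cdots .
\]
Because each $G_k$ has strictly decreasing $h$-magnitude for $\delta < 1/2$, and commutators of elements of $\Psi^{-\infty,0}$ gain a power of $h$, the BCH series is a well-defined asymptotic expansion yielding $G \in \Psi^0$ self-adjoint. Lemma~\ref{l:eiG} ensures each $e^{iG_k} \in \Psi^0$, so the composition can be identified with $e^{iG}$ modulo $O(h^\infty)_{\Psi^{-\infty}}$.

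\textbf{Extracting the final form.} After the iteration,
\[
e^{iG}Pe^{-iG} = P_0 + hQ + h\tilde{W}_{\mathrm{off}} + O(h^N)_{\Psi^{-\infty}},
\]
where $Q = Q_{N_1} \in \Psi^{-\infty,0}$ is self-adjoint, and $h\tilde{W}_{\mathrm{off}}$ is microlocally supported outside $|\xi| \in [a,b]$ (since only there did we remove the oscillatory coefficients). Using that $1-\chi(h^2\Delta-1)$ is elliptic on the microsupport of $\tilde{W}_{\mathrm{off}}$, I would symmetrize to write
\[
h\tilde{W}_{\mathrm{off}} = (1-\chi(h^2\Delta-1))\, h\tilde{W}\, (1-\chi(h^2\Delta-1)) + O(h^\infty)_{\Psi^{-\infty}}
\]
for a self-adjoint $\tilde{W} \in \Psi^1$, completing the proof.

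\textbf{Main obstacle.} The main technical point is to verify that the combined $G$ genuinely lies in $\Psi^0$ despite the $h^{-\delta}$ prefactor in $G_0$ and despite the successive enlargement $\Theta \to \Theta_{\lceil N/(k+1)-1\rceil}$ of the frequency set; admissibility of $W_{k+1}$ via Lemma~\ref{l:induct} is essential, as is careful tracking of symbol classes through the commutators appearing in BCH. A secondary issue is to introduce the cutoff $1-\chi(h^2\Delta-1)$ symmetrically so that self-adjointness of the $\tilde{W}$-term is preserved.
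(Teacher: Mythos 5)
Your proposal is correct and matches the paper's intended argument: the paper offers no written proof, simply declaring the corollary ``immediate'' from Lemma~\ref{l:gauge}, and your finite iteration of that lemma (starting from $Q_0$, $W_0$, $R_0$ built from the admissible $W$) followed by a Baker--Campbell--Hausdorff packaging of the product of unitaries into a single $e^{iG}$ with $G\in\Psi^0$ self-adjoint is exactly the standard way to make that precise. The only bookkeeping point worth noting is that the $\theta=0$ term of $W$ should be placed into $Q_0$ rather than $W_0$, since the lemma's $W_k$ sums over $\Theta\setminus\{0\}$.
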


\section{Limiting absorption for the gauge transformed operator}
Throughout this section, we work with an operator
\begin{equation}
\label{e:resolveForm}
\begin{gathered}
P=P_0+h(1-\chi(-h^2\Delta-1))W(x,hD)(1-\chi(-h^2\Delta-1))),\\ P_0\in S^{2,0},\,\sigma_{2,0}(P_0)=|\xi|^2,\qquad \sigma_{1,-1}(h^{-1}\Im P_0)=0.
\end{gathered}
\end{equation}
where $\chi \in C_c^\infty(\mathbb{R})$ with $\chi\equiv 1$ in a neighborhood of $[-\delta,\delta]$. and $W\in \Psi^1$. We will show that for $E\in [1-\delta,1+\delta]$, $R_{\pm}(E):=(P-E\mp  i0)^{-1}$ exist as limiting absorption type limits. Moreover, we will show that $R_{\pm}(E)$ satisfy certain outgoing/incoming properties.

Throughout this section, we let $\chi_i\in C_c^\infty(\mathbb{R})$ $i=1,2,3$ with 
\begin{equation}\label{e:chi}
\begin{gathered}
\chi_i\equiv 1\text{ near }[-\delta,\delta],\qquad \supp \chi_i\subset \{\chi_{i-1}\equiv 1\},\, i=2,3, \qquad \supp\chi_{1}\subset\{ \chi\equiv 1\},\\
\psi_i:=(1-\chi_i((-h^2\Delta-1))),\qquad X_i:=\chi_i((-h^2\Delta-1))
\end{gathered}
\end{equation}

\subsection{Elliptic Estimates}
We first obtain estimates in the elliptic region where the perturbation of $P_0$ is supported.
\begin{lemma}
With $\psi_i$ as in~\eqref{e:chi},
\begin{equation}
\label{e:highFrequency}
c\|\psi_2u\|_{H_h^{s+2,k}}\leq \|\psi_3 (P-E\pm i\e)u\|_{H_h^{s,k}}+Ch^N\|u\|_{H_h^{-N,-N}}.
\end{equation}
\end{lemma}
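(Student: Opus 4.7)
The plan is to apply Lemma~\ref{l:elliptic} after constructing a frequency-localized left parametrix. Observe first that the principal symbol of $P - E \pm i\e$ in $\Psi^2$ is $|\xi|^2 - E$, and since $\chi_2 \equiv 1$ on a neighborhood of $[-\delta,\delta]$, there is $\delta' > \delta$ with $\WF(\psi_2) \subset \{\,||\xi|^2-1| \geq \delta'\,\}$; consequently $||\xi|^2 - E \mp i\e| \geq \delta' - \delta > 0$ on this set, uniformly in $E \in [1-\delta, 1+\delta]$ and $\e \geq 0$. The perturbation $h(1-\chi(-h^2\Delta-1))W(1-\chi(-h^2\Delta-1))$ does not affect the principal symbol, so this ellipticity is for $P - E \pm i\e$ itself.

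The nesting $\supp \chi_3 \subset \{\chi_2 \equiv 1\}$ translates to the pointwise identity $\chi_3(|\xi|^2-1)(1-\chi_2(|\xi|^2-1)) = 0$, i.e.\ $X_3 \psi_2 = 0$. Motivated by this, I would construct $Q \in \Psi^{-2}$ via the standard iterative elliptic parametrix scheme starting from
$$q_0(\xi) := \frac{1 - \chi_2(|\xi|^2-1)}{|\xi|^2 - E \mp i\e},$$
arranging at each step that the corrector has symbol supported in the $\xi$-support of $q_0$, hence in $\{\chi_3(|\xi|^2-1) = 0\}$, by composing the error on the left with $q_0$. The uniform ellipticity makes this iteration uniform in $E$ and $\e \geq 0$, and yields
$$Q(P - E \pm i\e) = \psi_2 + O(h^\infty)_{\Psi^{-\infty}}.$$

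Because $X_3$ is a Fourier multiplier with symbol $\chi_3(|\xi|^2-1)$, the composition $QX_3$ has symbol equal to the pointwise product of the symbol of $Q$ with $\chi_3(|\xi|^2-1)$, which vanishes by construction; thus $QX_3 = 0$ exactly, so $Q(P - E \pm i\e)u = Q\psi_3(P - E \pm i\e)u$. Combining this with $\psi_2 u = Q(P - E \pm i\e)u + O(h^\infty)_{\Psi^{-\infty}}u$ and the boundedness $Q \colon H_h^{s,k} \to H_h^{s+2,k}$ from the mapping properties of $\Psi^{-2}$ on weighted Sobolev spaces produces the desired estimate, with the $O(h^\infty)_{\Psi^{-\infty}}$ remainder absorbed into $C_N h^N \|u\|_{H_h^{-N,-N}}$. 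The only nonstandard technical point to verify is that the parametrix iteration can be arranged to preserve the frequency-support condition on $Q$ so that $QX_3 = 0$ holds exactly; apart from that, this is the classical elliptic argument, and uniformity in $\e \geq 0$ follows for free from $||\xi|^2 - E \mp i\e| \geq ||\xi|^2 - E|$.
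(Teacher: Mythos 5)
Your core observation — that on $\WF(\psi_2)$ the operator is elliptic and that the nested supports force $X_3\psi_2 = 0$ — is correct, and the parametrix construction preserving the $\xi$-support in $\{\chi_3 = 0\}$ can be carried out. However, there is a genuine gap in the remainder estimate, and it matters for the downstream uses of the lemma. Because $W \in \Psi^1$ is almost periodic and has \emph{no} decay in $x$, it is not a scattering pseudodifferential operator, so the full operator $P - E \pm i\e$ lies only in $\Psi^2$, not in $\Psi^{2,0}$. Your parametrix $Q$ and, crucially, the error $R = Q(P-E\pm i\e) - \psi_2$ are therefore elements of $h^\infty\Psi^{-\infty}$ in the non-scattering class only. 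Such an $R$ satisfies $\|Ru\|_{H_h^{s+2,k}} \le C_N h^N\|u\|_{H_h^{-N,k}}$ (conjugating by $\langle x\rangle^{k}$ preserves $\Psi^{-\infty}$ but does not produce spatial decay), and for $k > -N$ this is a strictly larger norm than the $C_N h^N\|u\|_{H_h^{-N,-N}}$ the lemma asserts. The distinction is not cosmetic: in Lemma~\ref{l:resolve} and the limiting absorption argument the estimate is applied to $u \in H_h^{s,k_+}$ with $k_+ < -\tfrac12$, i.e.\ $u$ grows in $x$, and the scattering-class remainder is what lets those bounds close.

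The paper obtains the correct weight by not putting $hW$ into the operator that is inverted. Since $\psi_3(1-\chi) = 1-\chi$, one has $\psi_3(P-E\pm i\e) = \psi_3(P_0 - E\pm i\e) + h(1-\chi)W(1-\chi)$, and $\psi_3(P_0 - E\pm i\e)$ \emph{is} in $\Psi^{2,0}$; the scattering elliptic estimate of Lemma~\ref{l:elliptic} applied to it yields the $C_N h^N\|u\|_{H_h^{-N,-N}}$ remainder. The perturbation is then handled by brute force: $\|h(1-\chi)W(1-\chi)u\|_{H_h^{s,k}} \le Ch\|(1-\chi)u\|_{H_h^{s+1,k}} \le Ch\|\psi_2 u\|_{H_h^{s+2,k}}$, using $(1-\chi) = (1-\chi)\psi_2$, and the factor of $h$ permits absorption into the left-hand side for $h$ small. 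Your scheme would need this same absorption step for the $hW$ term, and once you add it the parametrix becomes unnecessary and you have recovered the paper's argument.
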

\begin{proof}
Observe that
$$
\psi _i(P-E)= \psi_i (P_0-E)+h(1-\chi(-h^2\Delta-1))W(x,hD)(1-\chi(-h^2\Delta-1)))
$$
since $\psi_i(1-\chi(-h^2\Delta-1)))=(1-\chi(-h^2\Delta-1))$. Note that $\WFs(\psi_2)\subset\Ells(\psi_3 (P_0-E))$, and hence by Lemma~\ref{l:elliptic}, for $\e>0$,
\begin{equation*}
\begin{aligned}
\|\psi_3 (P-E\pm i\e)u\|_{H_h^{s,k}}&\geq \|\psi_3 (P_0-E\pm i\e)u\|_{H_h^{s,k}}-Ch\|(1-\chi)u\|_{H_h^{s+1,k}}\\
&\geq c\|\psi_2 u\|_{H_h^{s+2,k}}-Ch\|(1-\chi)u\|_{H_h^{s+1,k}}-Ch^N\|u\|_{H_h^{-N,-N}}\\
&\geq c\|\psi_2u\|_{H_h^{s+2,k}}^2-Ch^N\|u\|_{H_h^{-N,-N}}^2.
\end{aligned}
\end{equation*}
Here, in the last line we have used that $(1-\chi)=(1-\chi)\psi_2$.
\end{proof}

\subsection{Propagation estimates}
Consider
$
\tilde{P}_{E}:=\langle x\rangle^{1/2}(P_0-E)\langle x\rangle^{1/2}
$
so that $\tilde{P}_{E}\in \Psi^{-\infty,1}$ is self-adioint and 
$$
\sigma_{2,1}(\tilde{P}_{E})=\langle x\rangle(\xi^2-E)=:\tilde{p}.
$$
Note that
$$
H_{\tilde{p}}=2\xi\langle x\rangle\partial_x -(\xi^2-E)x\langle x\rangle^{-1},
$$
and therefore, letting,
\begin{gather*}
L_+=\bigcup_{\pm}L_{+,\pm},\qquad L_{+,\pm}:=\{\xi=\pm \sqrt{E},x=\pm \infty\},\\
 L_-=\bigcup_{\pm}L_{-,\pm},\qquad L_{-,\pm}:=\{\xi=\mp \sqrt{E},x=\pm \infty\},
\end{gather*}
we have that $L_{+,\pm}$ are radial sinks for $\tilde{p}$ and $L_{-,\pm}$ are radial sources (see~\cite[Definition E.50]{DyZw:19}). 

\begin{lemma}
\label{l:radial}
Let $B_+,B_-\in \Psi^{\comp,0}$, 
\begin{equation}
\label{e:wfs}
\begin{gathered}
L_\pm\subset \Ells(B_\pm),\qquad \WFs(B_\pm)\cap L_\mp=\emptyset,\qquad \{p=E\}\subset (\Ells(B_-)\cup \Ells(B_+))
\end{gathered}
\end{equation}
and $B'_\pm\in\Psi^{\comp,0}$ with the same property, and $\WFs(B'_{\pm})\subset \Ells(B_{\pm}).$ Then, for all $k_+<-\frac{1}{2}$ and $k_->k_-'>-\frac{1}{2}$, and $N$ there is $C>0$ and $\delta>0$ such that for $\e\geq 0$, $E\in[1-\delta,1+\delta]$, and $u\in \mc{S}'(\mathbb{R})$ with $B_\pm (P_0-E-i\e)u\in H_{h}^{0,k_{\pm}}$, and $B_- u\in H_h^{0,k_-'}$, 
\begin{multline*}
\|B_+'u\|_{H_h^{0,k_+}}+\|B_-'u\|_{H_h^{0,k_-}}\\\leq Ch^{-1}(\|B_+(P_0-E-i\e)u\|_{H_h^{0,k_{+}+1}}+\|B_-(P_0-E-i\e)u\|_{H_h^{0,k_{-}+1}})+Ch^N\|u\|_{H_h^{-N,-N}}.
\end{multline*}
Similarly, for all $\tilde{k}_+>\tilde{k}_+'>-\frac{1}{2}$ and $\tilde{k}_-<-\frac{1}{2}$, and $N$ there are $C>0$ and $\delta>0$ such that for $\e\geq 0$, $E\in[1-\delta,1+\delta]$, and $u\in \mc{S}'(\mathbb{R})$ with $B_\pm (P_0-E+i\e)u\in H_{h}^{0,\tilde{k}_{\pm}}$, and $B_+u\in H_h^{0,\tilde{k}_+'}$, 
\begin{multline*}
\|B_+'u\|_{H_h^{0,\tilde{k}_+}}+\|B_-'u\|_{H_h^{0,\tilde{k}_-}}\\\leq Ch^{-1}(\|B_+(P_0-E+i\e)u\|_{H_h^{0,\tilde{k}_{+}+1}}+\|B_-(P_0-E+i\e)u\|_{H_h^{0,\tilde{k}_{-}+1}})+Ch^N\|u\|_{H_h^{-N,-N}}.
\end{multline*}

\end{lemma}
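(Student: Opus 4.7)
The plan is to reduce to a scattering operator by conjugation and then assemble the three standard microlocal estimates: radial source, radial sink, and propagation. Set $v := \langle x\rangle^{-1/2} u$, so that $(P_0 - E \mp i\e)u = f$ is equivalent to $(\tilde P_E \mp i\e \langle x\rangle) v = \langle x\rangle^{1/2} f$. This is exactly the form $P - i\e \langle x\rangle^{n}$ appearing in Lemmas~\ref{l:source} and~\ref{l:sink} with $n=1$, and the weight shift $\|u\|_{H_h^{0,k}} = \|v\|_{H_h^{0, k + 1/2}}$ identifies the threshold $k_u = -\tfrac{1}{2}$ with the scattering threshold $k_v = 0 = (n-1)/2$. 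A direct check of $H_{\tilde p} = 2\xi \langle x\rangle \partial_x - (\xi^2 - E)\tfrac{x}{\langle x\rangle}\partial_\xi$ on $\{\tilde p = 0\} = \{\xi = \pm\sqrt E\}$ confirms that the flow moves in the $\pm x$ direction there, so $L_+$ consists of two radial sinks and $L_-$ of two radial sources of $\tilde p$.

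For the outgoing case $(-i\e)$ the argument is: first apply Lemma~\ref{l:source} at each component of $L_-$, using the a priori hypothesis $B_- u \in H_h^{0,k_-'}$ with $k_-' > -\tfrac{1}{2}$, to control $\|B_-' u\|_{H_h^{0,k_-}}$ by $h^{-1}\|B_-(P_0 - E - i\e)u\|_{H_h^{0,k_-+1}}$ plus a remainder. Next apply Lemma~\ref{l:sink} at each component of $L_+$, valid for $k_+ < -\tfrac{1}{2}$, yielding control of $\|B_+' u\|_{H_h^{0,k_+}}$ in terms of some $\|Bu\|_{H_h^{0,k_+}}$ with $\WFs(B)\subset \Ells(B_+)\setminus L_+$. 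Finally, since the Hamiltonian flow carries any point of $\WFs(B)$ backward along the characteristic set into $\Ells(B_-')$ in finite time (while remaining in $\Ells(B_+)\cup\Ells(B_-)$), Lemma~\ref{l:basicProp} bounds $\|Bu\|_{H_h^{0,k_+}}$ by $\|B_-'u\|_{H_h^{0,k_+}}$ plus data; the inclusion $H_h^{0,k_-}\hookrightarrow H_h^{0,k_+}$ (valid because $k_+ < -\tfrac{1}{2} < k_-$) then couples this to the source estimate. Combining gives the outgoing inequality. For the remaining microlocal region (wavefront off the characteristic set), the elliptic estimate Lemma~\ref{l:elliptic} suffices, and it is absorbed into the cutoffs by a straightforward partition argument.

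The incoming case $(+i\e)$ follows by the analogous argument with the roles of $L_+$ and $L_-$ interchanged: the sign flip in $\e$ reverses the propagation direction in Lemmas~\ref{l:source}--\ref{l:sink} (whose proofs are positive-commutator arguments whose damping/growing sign is determined by $\mathrm{sgn}(\e)$), so for $+i\e$ the set $L_+$ plays the role of a source and $L_-$ the role of a sink. Alternatively, one can deduce this case by taking formal adjoints: the condition $\sigma_{1,-1}(h^{-1}\Im P_0) = 0$ ensures $(P_0 - E + i\e) = (P_0 - E - i\e)^* + O(h^2)_{\Psi^{0,-2}}$, and the small non-self-adjoint correction is absorbed into the remainder. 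Uniformity of the constants for $E \in [1-\delta, 1+\delta]$ follows from the fact that $\tilde p$, its flow, and the radial points depend continuously on $E$ and remain non-degenerate for $E$ near $1$.

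The main obstacle is bookkeeping: keeping straight the weight shift of $\tfrac{1}{2}$ under conjugation by $\langle x\rangle^{1/2}$, matching the $\pm i\e$ of the stated lemma to the $\pm i\e\langle x\rangle$ form in the scattering lemmas, and arranging the auxiliary microlocal cutoffs so that the a priori condition at the source, the propagation from source to sink, and the threshold inequalities at the sink all line up. Once the calculus is set up, the estimate is a routine concatenation of Lemmas~\ref{l:source}, \ref{l:sink}, \ref{l:basicProp} and \ref{l:elliptic}.
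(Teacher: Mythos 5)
Your proposal is correct and follows essentially the same route as the paper: conjugate by $\langle x\rangle^{1/2}$ to pass to $\tilde P_E$ (so the threshold $-\tfrac12$ becomes the scattering threshold $0$), apply the radial-source estimate (Lemma~\ref{l:source}) at $L_-$ and the radial-sink estimate (Lemma~\ref{l:sink}) at $L_+$, connect them with the propagation estimate (Lemma~\ref{l:basicProp}) along the characteristic set using the embedding $H_h^{0,k_-}\hookrightarrow H_h^{0,k_+}$, and close with the elliptic estimate off the characteristic set. The incoming case is handled exactly as you conclude, with $L_+$ treated as the source and $L_-$ as the sink; the paper phrases this as replacing $\tilde P$ by $-\tilde P$, which simultaneously reverses the Hamiltonian flow and restores the sign of the $i\e\langle x\rangle$ term, which is a cleaner justification than appealing to the sign of $\e$ alone.
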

\begin{proof}

Let $\tilde{B}_-\in \Psi^{\comp,0}$ such that $L_{-}\subset \Ells(\tilde{B}_-)$, $\WFs(\tilde{B}_-)\subset \Ells(B_-)$. Then, by Lemma~\ref{l:source} there is $A_-\in\Psi^{\comp,0}$ such that $L_{-}\subset \Ells(A_-)$ and for all $\tilde{k}_->\tilde{k}'_->0$, $\e\geq 0$ and $v\in \mc{S}'(\mathbb{R})$ with $\tilde{B}_-v\in H_h^{0,\tilde{k}_-'}$, $\tilde{B}_-(\tilde{P}_E-i\e\langle x\rangle)v\in H_h^{0,\tilde{k}_-}$, 
\begin{equation}
\label{e:source}
\|A_-v\|_{H_h^{0,\tilde{k}_-}}\leq Ch^{-1}\|\tilde{B}_-(\tilde{P}_E-i\e\langle x\rangle) v\|_{H_h^{0,\tilde{k}_-}}+C_Nh^N\|v\|_{H_h^{-N,-N}}.
\end{equation}

Next, let $\tilde{B}_+\in \Psi^{\comp,0}$ such that $L_{+}\subset \Ells(\tilde{B}_+)$, $\WFs(\tilde{B}_+)\subset \Ells(B_+)$. Then, by Lemma~\ref{l:sink} there are $A_+,B\in\Psi^{\comp,0}$ such that $L_{+}\subset \Ells(A_+)$, $\WFs(B)\subset \Ells(\tilde{B}_+)\setminus L_{+}$, and for all $\tilde{k}_+<0$, $\e\geq 0$ and $v\in \mc{S}'(\mathbb{R})$ with $Bv\in H_h^{0,\tilde{k}_+}$, $\tilde{B}_+(\tilde{P}_E-i\e\langle x\rangle)v\in H_h^{0,\tilde{k}_+}$, 
\begin{equation}
\label{e:sink}
\|A_+v\|_{H_h^{0,\tilde{k}_+}}\leq C\|Bv\|_{H_h^{0,\tilde{k}_+}}+Ch^{-1}\|\tilde{B}_+(\tilde{P}_E-i\e\langle x\rangle) v\|_{H_h^{0,\tilde{k}_+}}+C_Nh^N\|v\|_{H_h^{-N,-N}}.
\end{equation}

Finally, let $B_0\in \Psi^{\comp,0}$ with $\WFs(B_0)\subset \Ells(B_+)$, 
$$
\{\tilde{p}=0\}\subset \Ells(B_0)\cup\Ells(B_-').
$$
Then, there is $A_0\in \Psi^{\comp,0}$ such that $\WFs(A_0)\cap (L_+\cup L_-)=\emptyset$ and there is $T>0$ with
\begin{gather}
\WFs(A_0)\subset \bigcup_{0\leq t\leq T}\varphi_t(\Ells(A_-))\cap \Ells(B_0),\label{e:WFcond}\\
 \{ \langle x\rangle^{-1}\tilde{p}=0\}\subset\Ells(A_0)\cup \Ells(A_-)\cup \Ells(A_+).\label{e:elliptic}
\end{gather}
Now, by~\eqref{e:WFcond} and Lemma~\ref{l:basicProp} for all $\e\geq 0$, and $u\in \mc{S}'(\mathbb{R})$ such that $A_-v\in H_h^{0,\tilde{k}_-}$,  $B_0(\tilde{P}_E-i\e\langle x\rangle) v\in H_h^{0,\tilde{k}_-}$,
\begin{equation}
\label{e:propagate}
\|A_0v\|_{H_h^{0,\tilde{k}_-}}\leq C\|A_-u\|_{H_h^{0,\tilde{k}_-}}+Ch^{-1}\|B_0(\tilde{P}_E-i\e\langle x\rangle) v\|_{H_h^{0,\tilde{k}_-}}+C_Nh^N\|v\|_{H_h^{-N,-N}}.
\end{equation}

Next, observe that if $B_i\in \Psi^{\comp,0}$ with $\WFs(B_1)\subset \Ells(B_2)$, then there is $C_{k,s}>0$ such that for all $w\in \mc{S}'(\mathbb{R})$ with $B_2w\in H_h^{0,k+s}$,
$$
\|B_1\langle x\rangle^{s}w\|_{H_h^{0,k}}\leq C\|B_2w\|_{H_h^{0,k+s}},
$$
Combining~\eqref{e:source},~\eqref{e:sink},~\eqref{e:propagate}, and using~\eqref{e:elliptic} and Lemma~\ref{l:elliptic} finishes the proof of the first inequality after putting $v=\langle x\rangle^{-1/2}u$ and letting $\tilde{k}_+=k_++\frac{1}{2}=$, $\tilde{k}_-=k_-+\frac{1}{2}.$
 
 The second inequality follows by replacing $\tilde{P}$ by $-\tilde{P}$.
 \end{proof}
 
 Now, for each $\Gamma\subset \scatPhase$, let $B_\Gamma\in \Psi^{0,0}$ such that $\WFs(B_\Gamma)\subset \overline{\Gamma}$, $ \Ells(B_\Gamma)=\Gamma^o$. Then, for $k_\Gamma\geq k$, $s\in \mathbb{R}$ define the norm,
 $$
 \|u\|_{\mc{X}_{\Gamma}^{s,k_\Gamma,k}}:=\|B_\Gamma u\|_{H_h^{s,k_\Gamma}}+\|u\|_{H_h^{s,k}}.
 $$
 \begin{lemma}
 \label{l:resolve}
For $k_->-\frac{1}{2}$, $k_+<-\frac{1}{2}$, $\Gamma_-,\Gamma_-'\subset\scatPhase$ open with $L_-\subset\Gamma\Subset \Gamma'\Subset \{\chi_3(|\xi|^2-1)\equiv 1\}\setminus L_+$, there is $h_0>0$ such that for all $u\in \mc{X}_{\Gamma,+}^{s,k_-,k_+}$, $\e>0$, and $0<h<h_0$
 $$
 \|u\|_{\mc{X}_{\Gamma_-}^{s,k_-,k_+}}\leq Ch^{-1}\|(P-E-i\e)u\|_{\mc{X}_{\Gamma_-'}^{s-2,k_-+1,k_++1}}.
 $$
 For $k_-<-\frac{1}{2}$, $k_+>-\frac{1}{2}$ and $\Gamma_+,\Gamma_+'\subset\scatPhase$ open with $L_+\subset\Gamma_+\Subset \Gamma_+'\Subset \{\chi_3(|\xi|^2-1)\equiv 1\}\setminus L_-$, there is $h_0>0$ such that for all $u\in \mc{X}_{\Gamma,-}^{s,k_-,k_+}$, $\e>0$, and $0<h<h_0$
 $$
 \|u\|_{\mc{X}_{\Gamma_+}^{s,k_+,k_-}}\leq Ch^{-1}\|(P-E-i\e)u\|_{\mc{X}_{\Gamma_+'}^{s-2,k_++1,k_-+1}}.
 $$
 \end{lemma}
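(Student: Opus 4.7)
The plan is to assemble $\|u\|_{\mc{X}_{\Gamma_-}^{s,k_-,k_+}}$ from three microlocal ingredients: a radial source estimate at $L_-$, a propagation-plus-radial-sink estimate ending at $L_+$ (both from Lemma~\ref{l:radial}), and the fiber-elliptic bound~\eqref{e:highFrequency} away from the characteristic set. I carry out the first inequality; the second follows by the same scheme with the roles of source and sink at $L_\pm$ swapped, exactly as in the $\tilde{P}\rightsquigarrow -\tilde{P}$ step at the end of the proof of Lemma~\ref{l:radial}.

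The opening move is the observation that $P-P_0$ vanishes microlocally on $\{\chi_3\equiv 1\}$: by the nesting in~\eqref{e:chi}, $\chi\equiv 1$ there, so the Fourier multiplier $1-\chi(-h^2\Delta-1)$ is trivial as a symbol. Consequently, for any $A\in\Psi^{\comp,0}$ with $\WFs(A)\subset\{\chi_3\equiv 1\}$ one has $A(P-P_0)=O(h^\infty)_{\Psi^{-\infty,-\infty}}$. Since $\Gamma_-'\Subset\{\chi_3\equiv 1\}\setminus L_+$ and, for $E$ close to $1$, the characteristic set $\{p=E\}$ also lies in $\{\chi_3\equiv 1\}$, every microlocal object can be placed inside this safe region, so propagation estimates for $P_0$ translate into propagation estimates for $P$ modulo $O(h^\infty)$.

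Next I would choose $B_\pm\in\Psi^{\comp,0}$ supported in $\{\chi_3\equiv 1\}$ and meeting the hypotheses~\eqref{e:wfs} of Lemma~\ref{l:radial}: $B_-$ with $L_-\cup\overline{\Gamma_-}\subset\Ells(B_-)$ and $\WFs(B_-)\subset\Gamma_-'$, and $B_+$ with $L_+\subset\Ells(B_+)$, $\WFs(B_+)\cap(L_-\cup\overline{\Gamma_-})=\emptyset$, $\{p=E\}\subset\Ells(B_-)\cup\Ells(B_+)$. Take $B_-':=B_{\Gamma_-}$ and $B_+'$ with $L_+\subset\Ells(B_+')$, $\WFs(B_+')\subset\Ells(B_+)$. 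Lemma~\ref{l:radial} (extended from $s=0$ to arbitrary $s$ by fiber compactness of the $B_\pm,B_\pm'$), combined with $B_\pm(P-P_0)=O(h^\infty)$, with the microlocal estimate $\|B_-f\|_{H_h^{s-2,k_-+1}}\leq C\|B_{\Gamma_-'}f\|_{H_h^{s-2,k_-+1}}+O(h^\infty)\|f\|$ (valid since $\WFs(B_-)\subset\Ells(B_{\Gamma_-'})$), and with $\|B_+f\|\leq C\|f\|$, then yields
\[
\|B_{\Gamma_-}u\|_{H_h^{s,k_-}}+\|B_+'u\|_{H_h^{s,k_+}}\leq Ch^{-1}\|(P-E-i\e)u\|_{\mc{X}_{\Gamma_-'}^{s-2,k_-+1,k_++1}}+C_Nh^N\|u\|_{H_h^{-N,-N}}.
\]

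It remains to bound the second summand $\|u\|_{H_h^{s,k_+}}$ in $\|u\|_{\mc{X}_{\Gamma_-}^{s,k_-,k_+}}$. I would augment $\{B_{\Gamma_-},B_+'\}$ by a third operator $\widetilde{A}\in\Psi^{0,0}$ so that $B_{\Gamma_-}+B_+'+\widetilde{A}=\mathrm{Id}+O(h^\infty)_{\Psi^{-\infty,-\infty}}$, with $\WFs(\widetilde{A})$ contained in the elliptic set of $\psi_2$. Then $\|B_{\Gamma_-}u\|_{H_h^{s,k_+}}\leq\|B_{\Gamma_-}u\|_{H_h^{s,k_-}}$ since $k_+<k_-$; $\|B_+'u\|_{H_h^{s,k_+}}$ is handled by the display above; and~\eqref{e:highFrequency} applied with $(s,k)\mapsto(s-2,k_+)$ bounds $\|\widetilde{A}u\|_{H_h^{s,k_+}}$ by $C\|(P-E-i\e)u\|_{H_h^{s-2,k_+}}+C_Nh^N\|u\|_{H_h^{-N,-N}}$, which in turn is dominated by the $\mc{X}_{\Gamma_-'}^{s-2,k_-+1,k_++1}$ norm on the right. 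Summing these three pieces and absorbing the $C_Nh^N\|u\|_{H_h^{-N,-N}}$ error for $h$ small completes the estimate. The main obstacle I anticipate is the simultaneous juggling of microlocal conditions on $B_\pm$ and $B_\pm'$: satisfying the hypotheses of Lemma~\ref{l:radial}, maintaining $\WFs(B_\pm)\subset\{\chi_3\equiv 1\}$ so that $P$ and $P_0$ may be interchanged, enforcing $\WFs(B_-)\subset\Gamma_-'$ so that only the $B_{\Gamma_-'}$-microlocalized norm appears on the right-hand side, and arranging $B_{\Gamma_-}+B_+'+\widetilde{A}$ as a genuine microlocal partition so that $\|u\|_{H_h^{s,k_+}}$ decomposes into three controllable pieces.
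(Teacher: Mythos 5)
Your overall strategy is exactly the paper's: apply the combined source/propagation/sink estimate of Lemma~\ref{l:radial} on the characteristic set, use the fiber-elliptic bound~\eqref{e:highFrequency} away from it, exploit that $P$ and $P_0$ agree modulo $O(h^\infty)$ on the region where the frequency cutoffs are identically one (the paper phrases this as $X_2P=X_2P_0$), and absorb the $C_Nh^N\|u\|_{H_h^{-N,-N}}$ remainder for small $h$. So the architecture is right.

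There is, however, one concrete flaw in your cutoff bookkeeping. You take $B_-'=B_{\Gamma_-}$, whose elliptic set is \emph{exactly} $\Gamma_-^{o}$, while simultaneously demanding $\WFs(B_+)\cap\overline{\Gamma_-}=\emptyset$, hence $\Ells(B_+')\subset\WFs(B_+')\subset\Ells(B_+)$ is disjoint from $\overline{\Gamma_-}$. Since the characteristic curve $\{p=E\}$ passes through $L_-$ and must exit $\Gamma_-$, the set $\{p=E\}\cap\partial\Gamma_-$ is nonempty and lies in neither $\Ells(B_-')$ nor $\Ells(B_+')$. Consequently (i) the pair $(B_-',B_+')$ fails the covering requirement in~\eqref{e:wfs}, which Lemma~\ref{l:radial} imposes on the primed operators as well, and (ii) the partition $B_{\Gamma_-}+B_+'+\widetilde{A}=\Id+O(h^\infty)$ with $\WFs(\widetilde{A})\subset\Ells(\psi_2)$ cannot exist: the three symbols all vanish near $\{p=E\}\cap\partial\Gamma_-$, so they cannot sum to $1$ there. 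The repair is standard and is precisely what the paper does: introduce two \emph{overlapping} nested cutoffs $A_{\Gamma_1},A_{\Gamma_2}\in\Psi^{\comp,0}$ with $\Gamma_-\Subset\Ells(A_{\Gamma_1})$, $\WFs(A_{\Gamma_1})\subset\Gamma_1\subset\Ells(A_{\Gamma_2})$ and $\WFs(\Id-A_{\Gamma_1})\subset\Ells(\Id-A_{\Gamma_2})$; set $B_-'=A_{\Gamma_1}X_2$ and $B_+'=(\Id-A_{\Gamma_2})X_2$, so that $\Ells(B_-')\cup\Ells(B_+')$ automatically covers $\{p=E\}$; run your argument with these; and only at the very end pass from $A_{\Gamma_1}u$ to $B_{\Gamma_-}u$ by one more application of Lemma~\ref{l:elliptic}, using $\WFs(B_{\Gamma_-})\subset\Ells(A_{\Gamma_1})$. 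With that adjustment (and the corresponding use of $\WFs(A_{\Gamma_i})\subset\Ells(B_{\Gamma_-'})$ to keep only the $B_{\Gamma_-'}$-microlocalized norm of $f$ on the right), your proof closes.
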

\begin{proof}
Put $f_\e=(P-E-i\e)u$. Let $\Gamma_-\Subset \Gamma_1\Subset \Gamma_2\Subset\Gamma_-'$ and $A_{\Gamma_1}$, $A_{\Gamma_2}\in \Psi^{\comp,0}$ such that 
\begin{gather*}
\Gamma_-\Subset \Ells(A_{\Gamma_1})\subset\WFs(A_{\Gamma_1})\subset \Gamma_1\subset \Ells(A_{\Gamma_2})\subset \WFs(A_{\Gamma_2})\subset \Gamma_2,\\
 \WFs(\Id-A_{\Gamma_i})\cap L_-=\emptyset,\qquad \WFs(\Id-A_{\Gamma_1})\subset \Ells(\Id-A_{\Gamma_2}) .
\end{gather*}
Next, define
\begin{equation}
\label{e:defB}
B_+:=(\Id-A_{\Gamma_1})X_1,\qquad B_-:=A_{\Gamma_2}X_1,\qquad B_+':=(\Id-A_{\Gamma_2})X_2,\qquad B_-':=A_{\Gamma_1}X_2.
\end{equation}
Then,~\eqref{e:wfs} is satisfied and by Lemma~\ref{l:radial} together with the fact that $X_2P=X_2P_0$,
\begin{equation}
\label{e:1}
\begin{aligned}
\|B_-'u\|_{H_h^{0,k_-}}+\|B_+'u\|_{H_h^{0,k_+}}&\leq Ch^{-1}(\|B_+f_\e\|_{H_h^{0,k_++1}}+\|B_-f_\e \|_{H_h^{0,k_-+1}})+Ch^N\|u\|_{H_h^{-N,-N}}\\
&\leq  Ch^{-1}(\|f_\e\|_{H_h^{0,k_++1}}+\|B_-f_\e\|_{H_h^{0,k_-+1}})+Ch^N\|u\|_{H_h^{-N,-N}}
\end{aligned}
\end{equation}
Now, since $\WFs(A_{\Gamma_i})\subset  \Ells(B_{\Gamma_-'})$, we have by Lemma~\ref{l:elliptic}
\begin{equation}
\label{e:2p}
\begin{aligned}
\|B_-f_\e\|_{H_h^{s,k_-+1}}+\|A_{\Gamma_1}f_\e\|_{H_h^{s,k_-+1}}&\leq C\|B_{\Gamma_-'}f_\e\|_{H_h^{s,k_-+1}}+C_Nh^N\|f_\e\|_{H_h^{-N,-N}}\\
&\leq C\|B_{\Gamma_-'}f_\e\|_{H_h^{s,k_-+1}}+C_Nh^N\|u\|_{H_h^{-N,-N}}.
\end{aligned}
\end{equation}

Next, since $\WFs(A_{\Gamma_2})\cap \WFs(\Id-X_3)=\emptyset$, and $\WFs(\Id-X_2)\subset \WFs(\Id-X_3)$,
we have by~\eqref{e:defB},~\eqref{e:1} and~\eqref{e:2p} that
\begin{equation}
\label{e:2}
\begin{aligned}
\|A_{\Gamma_1}u\|_{H_h^{s,k_-}}&\leq C_s\|A_{\Gamma_1}X_2u\|_{H_h^{0,k_-}}+\|A_{\Gamma_1}(\Id-X_2)u\|_{H_h^{s,k_-}}\\
&\leq Ch^{-1}\|B_{\Gamma'_-}f_\e\|_{H_h^{0,k_-+1}}+\|f_\e\|_{H_h^{0,k_++1}} +Ch^N\|u\|_{H_h^{-N,-N}}\\
&\leq Ch^{-1}\|f_\e\|_{\mc{X}_{\Gamma_-'}^{s-2,k_-+1,k_++1}}+Ch^N\|u\|_{H_h^{-N,-N}}.
\end{aligned}
\end{equation}
Now, since $\WFs(X_2)\subset \Ells(X_1)$, and $\{p=E\}\subset \Ells(\Id-A_{\Gamma_2})\cup \Ells(A_{\Gamma_1})$,
$$
\WFs(X_2)\setminus( \Ells(\Id-A_{\Gamma_2})\cup \Ells (A_{\Gamma_1}))\subset \Ells( X_1(P_0-E-i\e)),
$$
with uniform bounds in $\e\geq 0$. Therefore, using~\eqref{e:1},~\eqref{e:2p} together with the the elliptic estimate from Lemma~\ref{l:elliptic}, we have 
\begin{align*}
\|X_2u\|_{H_h^{0,k_+}}&\leq Ch^{-1}\|f_\e \|_{\mc{X}_{\Gamma_-'}^{s-2,k_-+1,k_++1}}+C\|X_1f_\e\|_{H_h^{0,k_+}}+Ch^N\|u\|_{H_h^{-N,-N}}\\
&\leq Ch^{-1}\|f_\e\|_{\mc{X}_{\Gamma_-'}^{s-2,k_-+1,k_++1}}+Ch^N\|u\|_{H_h^{-N,-N}}.
\end{align*}
So, using~\eqref{e:highFrequency},
\begin{equation}
\label{e:3}
\|u\|_{H_h^{s,k_+}}\leq Ch^{-1}\|f_\e\|_{\mc{X}_{\Gamma_-'}^{s-2,k_-+1,k_++1}}+Ch^N\|u\|_{H_h^{-N,-N}}.
\end{equation}
For $h$ small enough, the first part of the lemma follows from~\eqref{e:2} and~\eqref{e:3}, the fact that $\WFs(B_{\Gamma_-})\subset \Ells(A_{\Gamma_1})$, and the elliptic estimate (Lemma~\ref{l:elliptic}).  The second claim follows from an identical argument.
\end{proof}
\subsection{The limiting absorption principle and the outgoing property}
We are now in a position to prove the limiting absorption principle. For this, we define $R(\lambda):=(P-\lambda)^{-1}:H_h^{s,k}\to H_h^{s+2,k}$ for $\Im \lambda\neq 0$. 

\begin{lemma}
\label{l:limabs}
Let $\Gamma_-$ be a neighborhood of $L_-$ satisfying the assumptions of Lemma~\ref{l:resolve}, $k_->\frac{1}{2}$, $k_+<-\frac{1}{2}$, $s\in \mathbb{R}$, and $E\in [1-\delta,1+\delta]$, the strong limit $R(E+i0): H_h^{s,k_-}\to \mc{X}_{\Gamma_-}^{s+2,k_--1,k_+}$ exists and satisfies the bound
$$
\|R(E+i0)f\|_{\mc{X}_{\Gamma_-}^{s+2,k_--1,k_+}}\leq Ch^{-1}\|f\|_{H_h^{s,k_-}}.
$$
Similarly, for $ \Gamma_+$ a neighborhood of $L_+$ satisfying the assumptions of Lemma~\ref{l:resolve}, $k_-<-\frac{1}{2}$, $k_+>\frac{1}{2}$,  $s\in \mathbb{R}$, and $E\in[1-\delta,1+\delta]$, the strong limit $R(E-i0): H_h^{s,k_+}\to \mc{X}_{\Gamma_+}^{s+2,k_+-1,k_-}$ exists and satisfies the bound
$$
\|R(E-i0)f\|_{\mc{X}_{\Gamma_+}^{s+2,k_+-1,k_-}}\leq Ch^{-1}\|f\|_{H_h^{s,k_+}}.
$$
\end{lemma}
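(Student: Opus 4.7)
I prove the outgoing case $R(E+i0)$; the incoming case is identical after swapping the sources $L_-$ with sinks $L_+$ and replacing $+i\e$ by $-i\e$. The strategy is the standard limiting absorption recipe: uniform a priori bound, subsequential weak limit, uniqueness, and passage from weak to strong convergence.

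\emph{A priori bound.} For $\e>0$ and $f\in H_h^{s,k_-}$, the (essential) self-adjointness of $P$ makes $u_\e:=R(E+i\e)f$ well defined in $L^2$. Applying Lemma~\ref{l:resolve} with output triple $(s+2,k_--1,k_+)$---legitimate because $k_->1/2$ implies $k_--1>-1/2$---yields
$$\|u_\e\|_{\mc{X}_{\Gamma_-}^{s+2,k_--1,k_+}}\leq Ch^{-1}\|f\|_{\mc{X}_{\Gamma_-'}^{s,k_-,k_++1}}\leq Ch^{-1}\|f\|_{H_h^{s,k_-}},$$
the second step being the continuous embedding $H_h^{s,k_-}\hookrightarrow\mc{X}_{\Gamma_-'}^{s,k_-,k_++1}$, valid because $k_->1/2>k_++1$.

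\emph{Weak limit and uniqueness.} For any sequence $\e_n\to 0^+$, weak compactness produces a subsequence with $u_{\e_n}\rightharpoonup u$ in $\mc{X}_{\Gamma_-}^{s+2,k_--1,k_+}$, and passing to the distributional limit in $(P-E-i\e_n)u_{\e_n}=f$ gives $(P-E)u=f$. Each of Lemmas~\ref{l:basicProp},~\ref{l:source},~\ref{l:sink} allows $\e\ge 0$, so the proof of Lemma~\ref{l:resolve} itself goes through at $\e=0$. Applied to $v\in\mc{X}_{\Gamma_-}^{s+2,k_--1,k_+}$ with $(P-E)v=0$ it forces $v=0$. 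Thus $u$ is unique, the full family converges weakly, and the advertised norm bound on $R(E+i0)$ follows by lower semicontinuity of the norm.

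\emph{Strong convergence.} From $(P-E-i\e)(u_\e-u)=i\e u$ and Lemma~\ref{l:resolve} one obtains the Cauchy-type estimate
$$\|u_\e-u\|_{\mc{X}_{\Gamma_-}^{s+2,k_--1,k_+}}\leq Ch^{-1}\e\,\|u\|_{\mc{X}_{\Gamma_-'}^{s,k_-,k_++1}}.$$
Closing this requires $\|u\|_{\mc{X}_{\Gamma_-'}^{s,k_-,k_++1}}<\infty$, which is not immediate from the target bound because Lemma~\ref{l:resolve} loses one unit of $\langle x\rangle$-decay. I would combine Rellich-type compactness (e.g.\ $H_h^{s+2,k_+}\hookrightarrow_c H_h^{s+2-\sigma,k_+-\sigma}$ on $\mathbb{R}$ for small $\sigma>0$) to upgrade the weak convergence of $u_{\e_n}$ to strong convergence in a weaker norm, together with an additional application of Lemma~\ref{l:resolve} using a slightly enlarged source neighborhood to control the microlocal $B_{\Gamma_-'}$-component, and bootstrap back to the target norm.

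\emph{Main obstacle.} The delicate point is precisely this weight-loss issue in the strong-convergence step: the resolvent estimate trades two orders of regularity for one unit of scattering decay, so the Cauchy-in-$\e$ argument cannot close in the target norm by a direct embedding, and a compactness--bootstrap hybrid of the kind just sketched is needed.
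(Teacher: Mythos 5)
Your a priori bound and uniqueness steps match the paper's argument (the paper reaches the uniform bound by a contradiction/normalization argument, but your direct application of Lemma~\ref{l:resolve} to $u_\e=R(E+i\e)f$ is legitimate since for fixed $\e>0$ the resolvent maps $H_h^{s,k_-}$ into $H_h^{s+2,k_-}\subset\mc{X}_{\Gamma_-}^{s+2,k_--1,k_+}$, and the paper itself then notes the direct bound follows ``arguing as above''). Where you diverge is the passage from the uniform bound to actual convergence. The paper does not attempt your Cauchy-in-$\e$ estimate at all, precisely because of the weight loss you identify; instead it runs a subsequence--uniqueness argument: from any $\e_n\to0^+$ one extracts, by compactness of the inclusion into $\mc{X}_{\Gamma_-}^{s'+2,k_-'-1,k_+'}$ with $s'<s$, $\tfrac12<k_-'<k_-$, $k_+'<k_+$, a subsequence converging strongly in that weaker norm (and weakly in the target norm) to some $u$ with $(P-E)u=f$; the $\e=0$ resolvent estimate shows any two such limits coincide; and then the standard ``every subsequence has a sub-subsequence converging to the same limit'' argument upgrades this to convergence of the full family. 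This is exactly the compactness route you gesture at, and it closes the argument without any bootstrap back to full decay: the uniform bound in the target space plus strong convergence in the weaker space (equivalently, weak convergence in the target space together with uniqueness) is what is meant by the strong limit here. So your proposal is essentially correct; the one piece left as a sketch --- the ``bootstrap back to the target norm'' --- is not needed if you replace the Cauchy-in-$\e$ strategy by the subsequence--uniqueness scheme.
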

\begin{proof}
 We start by showing that for $k_->\frac{1}{2}$, $k_+<-\frac{1}{2}$, $R(E+i\e):=(P-E-i\e)^{-1}:H_h^{s,k_-}\to H_h^{s+2,k_+}$ converges as $\e \to 0^+$.  First, note that for each fixed $\e>0$, $R(E+i\e):H_h^{s,k}\to H_h^{s+2,k}$ is well defined. Let  $\Gamma_-'$ be a neighborhood of $L_-$ with $ \Gamma_-\Subset \Gamma_-'$.

  Suppose there is $f\in H_h^{s,k_-}$ such that $R(E+i\e)f$ is not bounded in $H_h^{s+2,k_+}$. Then, there are $\e_n\to 0^+$ such that, defining $u_n:=R(E+i\e_n)f\in H_h^{s+2,k_-}$, we have $\|u_n\|_{H_h^{s+2,k_+}}\to \infty$. Putting $v_n=u_n/\|u_n\|_{H_h^{s+2,k_+}}$, we have that $\|v_n\|_{H_h^{s+2,k_+}}=1$ is bounded  and $(P-E-i\e_n)v_n=f/\|u_k\|_{H_h^{s,k_+}}\to 0$ in $H_h^{s,k_-}$. 
  
Since $f\in H_h^{s,k_-}$, for all $\e>0$, $R(E+i\e):H_h^{s,k}\to H_h^{s+2,k}$, and $k_--1> k_+$, we have $v_n\in \mc{X}_{\Gamma_-}^{s+2,k_--1,k_+}$ and $f\in \mc{X}_{\Gamma_-'}^{s,k_-,k_++1}$. Therefore, by Lemma~\ref{l:resolve} all $n$, 
\begin{align*}
\|v_n\|_{H_h^{s+2,k_+}}&\leq C\|v_n\|_{\mc{X}_{\Gamma_-}^{s+2,k_--1,k_+}}\\
&\leq Ch^{-1}\|f\|_{{\mc{X}_{\Gamma_-'}^{s+2,k_-,k_++1}}}/\|u_k\|_{H_h^{s+2,k_+}}\leq Ch^{-1}\|f\|_{H_h^{s,k_-}}/\|u_k\|_{H_h^{s+2,k_+}}\to 0
\end{align*}
which contradicts the fact that $\|v_n\|_{H_h^{s+2,k_+}}=1$. In particular, $u=R(E+i\e)f$ is uniformly bounded in $H_h^{s+2,k_+}$, and, arguing as above
$$
\|u\|_{\mc{X}_{\Gamma_-}^{s+2,k_--1,k_+}}\leq Ch^{-1}\|f\|_{H_h^{s,k_-}}.
$$

Now, we show that $R(E+i\e)f$ converges as $\e\to 0^+$. To see this, first take any sequence $\e_n\to 0^+$. Then, $R(E+i\e_n)f$ is bounded in $X_{\Gamma_-}^{s+2,k_--1,k_+}$ and hence, for any $s'<s$, $\frac{1}{2}<k'<k_-$, and $k_+'<k_+$, we may extract a subsequence and assume that $u_n=R(E+i\e_n)f\to u$ in $X_{\Gamma_-}^{s'+2,k'_--1,k'_+}$, $(P-E)u=f$, and $u_n\rightharpoonup u$ in $X_{\Gamma_-}^{s+2,k_--1,k_+}$.

Suppose that there is another sequence which converges to $u'\in X_{\Gamma_-}^{s'+2,k'_--1,k'_+}$ and satisfies $(P-E)u'=f$. But then we have 
$$
\|u-u'\|_{H_h^{s'+2,k'_+}}\leq C\|u-u'\|_{ \mc{X}_{\Gamma_-}^{s'+2,k'_--1,k'_+}}\leq Ch^{-1}\|(P-E)(u-u')\|_{ \mc{X}_{\Gamma'_-}^{s'+2,k'_-+1,k'_++1}}=0,
$$
so $u=u'$. Now, suppose that there is a sequence $\e_m\to 0^+$ such that $u_m'':=R(E+i\e_m)f$ does not converge to $u$ in $X_{\Gamma_-}^{s+2,k_--1,k_+}$. Then, extracting a subsequence we may assume that $u_m''\to u''\in X_{\Gamma_-}^{s'+2,k'_--1,k'_+}$ and hence $u=u''$, which is a contradiction. In particular,  $R(E+i\e)f\to u$ in $\mc{X}_{\Gamma_-}^{s+2,k_--1,k_+}$ as $\e\to 0^+$. Boundedness of the operator follows from the above estimates. Moreover, we see that if $f\in H_h^{s,k_-}$, for some $k_->\frac{1}{2}$, then $R(E+i0)f\in \mc{X}_{\Gamma_-}^{s+2,k_--1,k_+}$, for any $k_+<-\frac{1}{2}$.

The case of $R(E-i\e)$ follows by an identical argument.
\end{proof}

Finally, we are in a position to prove that the limiting absorption resolvent satisfies the outgoing/incoming property.
\begin{lemma}
For $f\in \mc{E}'(\mathbb{R})$, 
$$
\WF(R(E\pm i0))f\subset \WF(f)\cup \bigcup_{\pm t\geq 0}\exp(tH_{|\xi|^2})\big(\WF(f)\cap \{|\xi|^2=E\}\big).
$$
\end{lemma}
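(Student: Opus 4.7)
Focus on the $+i0$ case (the $-i0$ case is symmetric, with $L_+$ replacing $L_-$). Let $\rho_0 \in \overline{T^*\mathbb{R}}$ lie outside the claimed right-hand side, and set $u := R(E+i0)f$, so $(P-E)u = f$; I aim to show $\rho_0 \notin \WF(u)$. If $|\xi(\rho_0)|^2 \neq E$, the principal symbol $|\xi|^2 - E$ is nonzero at $\rho_0$, so $P-E$ is elliptic there; Lemma~\ref{l:elliptic} combined with $\rho_0 \notin \WF(f)$ gives $\rho_0 \notin \WF(u)$ at once. So assume $\rho_0 = (x_0, \pm\sqrt{E}) \in \Sigma_E := \{|\xi|^2 = E\}$; the hypothesis that $\rho_0$ is outside the forward flowout says precisely that the backward trajectory $\gamma_t := (x_0 \pm 2\sqrt{E} t, \pm\sqrt{E})$, $t \leq 0$, avoids $\WF(f)$, and in the scattering compactification $\scatPhase$ its closure accrues to the radial source $L_{-,\mp}$ of $\tilde P_E = \langle x\rangle^{1/2}(P_0-E)\langle x\rangle^{1/2}$ at $x = \mp\infty$.

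I then obtain $\rho_0 \notin \WF(u)$ in two substeps. First, I show $L_- \cap \WFs(u) = \emptyset$: since $f \in \mc{E}'$ is compactly supported, any scattering cutoff $B_1 \in \Psi^{\comp,0}$ microlocalized near $L_\pm$ has its $x$-support essentially near infinity, so $B_1 f = O(h^\infty)$ in every weighted Sobolev norm. Lemma~\ref{l:limabs} supplies the a priori polynomial bound on $u$, and the radial source estimate of Lemma~\ref{l:source} applied to $\tilde P_E$ (after the substitution $v = \langle x\rangle^{-1/2} u$ used in the proof of Lemma~\ref{l:radial}) upgrades this to $\|A u\|_{H_h^{s,k}} = O(h^\infty)$ for $A$ microlocalized near $L_-$ and any $s,k$. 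Second, I propagate this $O(h^\infty)$ seed forward along the Hamilton flow of $p = |\xi|^2 - E$ to $\rho_0$ via Lemma~\ref{l:basicProp}: choose $A \in \Psi^{\comp,0}$ microlocalized at $\rho_0$, $B \in \Psi^{\comp,0}$ microlocalized near $L_{-,\mp}$ (where $Bu = O(h^\infty)$ by the previous substep), and $B_1 \in \Psi^{\comp, 0}$ elliptic on a thin tube around the closed arc $\overline{\{\gamma_t : -T \leq t \leq 0\}} \cup \{L_{-,\mp}\}$ for $T$ large enough that $\gamma_{-T}$ lies inside the region of $B$-ellipticity. The tube can be shrunk to miss $\WF(f)$, so $B_1 f = O(h^\infty)$, and the estimate
\begin{equation*}
\|Au\|_{H_h^{s,k}} \leq C\|Bu\|_{H_h^{s,k}} + Ch^{-1}\|B_1 f\|_{H_h^{s,k+1}} + C_N h^N\|u\|_{H_h^{-N,-N}}
\end{equation*}
has every right-hand term of order $h^\infty$, forcing $\|Au\|_{H_h^{s,k}} = O(h^\infty)$ and hence $\rho_0 \notin \WF(u)$.

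The main obstacle is the bookkeeping between the two calculi: $\WF(u)$ lives in $\overline{T^*\mathbb{R}}$ and only detects $|\xi| \to \infty$, whereas the tools (Lemmas~\ref{l:basicProp},~\ref{l:source},~\ref{l:limabs}) are statements in the scattering calculus on $\scatPhase$, which also sees $|x| \to \infty$. The relevant bicharacteristic on $\Sigma_E$ has $|\xi| = \sqrt E$ bounded but runs to $|x| = \infty$, so the seed regularity at its endpoint must be stated scattering-microlocally—exactly the outgoing condition at the radial source provided by the first substep—while the target at $\rho_0$ is a standard semiclassical-microlocal condition at finite $x,\xi$; the scattering propagation estimate is the bridge that transfers one into the other.
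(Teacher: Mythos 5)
Your proposal is correct and follows essentially the same path as the paper: elliptic regularity off the characteristic set, Lemma~\ref{l:limabs} to seed an $O(h^\infty)$ estimate near the radial source $L_-$ (and $L_+$ for the incoming case), Lemma~\ref{l:source} to establish it, and then Lemma~\ref{l:basicProp} to propagate forward along $H_p$ through a tube avoiding $\WF(f)$; finally the observation that $f\in\mc{E}'$ forces $\WFs(f)=\WF(f)$ converts the scattering-microlocal statement back to one about $\WF$. The paper states the same chain of lemmas more compactly, phrasing the intermediate conclusion as a $\WFs$-inclusion that includes $L_\pm$ and then discarding $L_\pm$ because it lies over spatial infinity; your version makes this bookkeeping between $\overline{T^*\mathbb{R}}$ and $\scatPhase$ explicit, which is useful, but there is no substantive difference.
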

\begin{proof}
First, note that for $A\in \Psi^{0,\comp}$ with $\WF(A)\subset \Ell(B)\cap \Ell(P),$ and 
$$
\|Au\|_{H_h^{k,s}}\leq C\|BPu\|_{H_h^{k-2,s}} +Ch^N\|u\|_{H_h^{-N,-N}}.
$$
Therefore, letting $v_\pm=R(E\pm i0)f$, we have
$$
\WF(v_\pm)\cap\{|\xi|^2\neq E\}\subset\WF(f).
$$
Next, note that since $f\in H_h^{-N,\infty}$, for some $N$, by Lemma~\ref{l:limabs} we have $v_+ \in \mc{X}_{\Gamma_-}^{-N+2,\infty,k}$, and $v_- \in \mc{X}_{\Gamma_+}^{-N+2,\infty,k}$, for any $k<-\frac{1}{2}$ and any $\Gamma_\pm$ open neighborhoods of $L_\pm$ such that $\Gamma_\pm\cap L_\mp=\emptyset$. In particular, by Lemmas~\ref{l:basicProp},~\ref{l:source}, and~\ref{l:sink}, together with the fact that $X_3P\in \Psi^{2,0}$, 
$$
\WFs(R(E\pm i0)f)\cap \{|\xi|^2=E\}\subset \bigcup_{\pm t\geq 0} \exp(tH_p)\Big(\WFs(f)\cap\{|\xi|^2=E\}\Big)\cup L_\pm.
$$
Next, since $f\in\mc{E}'$, $\WFs(f)\subset\{|x|\leq C\}$ and in particular, $\WFs(f)=\WF(f)$. Therefore, the claim follows.
\end{proof}

Using the outgoing property, we can write an effective expression for the incoming/outgoing resolvent (see also~\cite[Lemma 3.60]{DyZw:19})
\begin{lemma}
\label{l:parametrix}
Let $R>0$. Then there is $T>0$ such that for all $f\in \mc{E}'$ supported in $B(0,R)$ and $B\in \Psi^{\comp ,-\infty}$ with $\WFs(B)\subset T^*B(0,R)\cap\{1/2\leq |\xi|\leq 2\}$, and $\chi \in C_c^\infty(B(0,R))$,
$$
\chi R(E\pm i0)B=\frac{i}{h}\int_0^{\pm T} \chi e^{-it(P-E)/h}Bf dt+O(h^\infty)_{\mc{D}'\to C_c^\infty}.
$$
\end{lemma}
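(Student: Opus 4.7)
The plan is to use the limiting absorption formula
$$
R(E\pm i0) = \tfrac{i}{h}\int_0^{\pm\infty} e^{-it(P-E)/h}\,dt
$$
(which is valid in the sense of Lemma~\ref{l:limabs} and standard spectral theory) and to show that truncating the time integral at $\pm T$ for $T$ sufficiently large introduces an error which, after composition with $\chi$ on the left, is $O(h^\infty)$ in $C^\infty$. Setting $u_T^{\pm} := \tfrac{i}{h}\int_0^{\pm T} e^{-it(P-E)/h}Bf\,dt$ and $v^{\pm} := R(E\pm i0)Bf$, integration by parts using $(P-E)e^{-it(P-E)/h} = ih\,\partial_t e^{-it(P-E)/h}$ gives
$$
(P-E)u_T^{\pm} = Bf - g_T^{\pm}, \qquad g_T^{\pm} := e^{\mp iT(P-E)/h}Bf,
$$
and hence $v^{\pm} - u_T^{\pm} = R(E\pm i0)g_T^{\pm}$. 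The task is therefore to prove $\chi R(E\pm i0)g_T^{\pm} = O(h^\infty)_{C^\infty}$ for $T$ large.

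The second step is a semiclassical Egorov argument. Since $\WFs(B)\subset T^*B(0,R)\cap\{1/2\leq|\xi|\leq 2\}$ sits in the elliptic region of $1-\chi_3(-h^2\Delta-1)$ where $P=P_0$ (by~\eqref{e:resolveForm}), applying the propagation estimate of Lemma~\ref{l:basicProp} to the Schr\"odinger flow yields
$$
\WFs(g_T^{\pm}) \subset \varphi_{\pm T}(\WFs(B)),
$$
where $\varphi_t(x,\xi) = (x+2t\xi,\xi)$ is the Hamiltonian flow of $\sigma_{2,0}(P_0)=|\xi|^2$. Fix $T>3R$. Then for $(y,\xi)\in\WFs(B)$ one has $|y\pm 2T\xi|\geq 2T|\xi|-|y|\geq T-R>2R$ and $\sgn(y\pm 2T\xi)=\pm\sgn(\xi)$. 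This sign condition ensures that the further forward (resp. backward) flow $\varphi_t$, $\pm t\geq 0$, from a point in $\varphi_{\pm T}(\WFs(B)\cap\{|\xi|^2=E\})$ has $x$-coordinate monotonically moving away from the origin, hence staying in $\{|x|>R\}$.

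Applying the outgoing/incoming property of the previous lemma to $g_T^{\pm}$ then gives
$$
\WFs(R(E\pm i0)g_T^{\pm}) \subset \WFs(g_T^{\pm}) \cup \bigcup_{\pm t\geq 0}\varphi_t\!\big(\WFs(g_T^{\pm})\cap\{|\xi|^2=E\}\big) \subset \{|x|>R\}.
$$
Since $\chi\in C_c^\infty(B(0,R))$, it follows that $\chi(v^{\pm}-u_T^{\pm})=O(h^\infty)$ in $C^\infty$, which is the desired identity.

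The main obstacle is the rigorous extension of the outgoing/incoming property from $f\in\mc{E}'$, as stated in the previous lemma, to $g_T^{\pm}$, which is Schwartz but not compactly supported. I would handle this by decomposing $g_T^{\pm} = \tilde\chi g_T^{\pm} + (1-\tilde\chi)g_T^{\pm}$ with $\tilde\chi\in C_c^\infty$ equal to $1$ on a very large ball $B(0,R')$: the first piece lies in $\mc{E}'$ and falls directly under the previous lemma, while the exterior piece has $\WFs$ contained in $\varphi_{\pm T}(\WFs(B))\cap\{|x|>R'\}$ and its contribution to $\chi R(E\pm i0)(\cdot)$ is controlled by the same forward/backward propagation together with the radial point estimates (Lemmas~\ref{l:source} and~\ref{l:sink}) and the limiting absorption bound of Lemma~\ref{l:limabs}.
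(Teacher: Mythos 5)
Your plan follows the same skeleton as the paper: identify the time-truncation error $g_T^{\pm}=e^{\mp iT(P-E)/h}Bf$, observe that $v^{\pm}-u_T^{\pm}$ is the outgoing (resp.\ incoming) solution of $(P-E)u=g_T^{\pm}$, and then use Egorov/finite propagation speed together with the outgoing property of $R(E\pm i0)$ to place $\WF$ of that solution outside $B(0,R)$ once $T>3R$. Two remarks.

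First, the identification $v^{\pm}-u_T^{\pm}=R(E\pm i0)g_T^{\pm}$ needs a sentence of justification: $u_T^{\pm}$ is a time integral over a compact interval of Schwartz functions, hence Schwartz, and Schwartz functions lie in $\mc{X}_{\Gamma_{\mp}}^{s,k,k'}$ for every $s,k$ and $k'<-\tfrac12$; so $v^{\pm}-u_T^{\pm}$ lies in the outgoing (resp.\ incoming) space, and uniqueness from Lemma~\ref{l:resolve} gives the identity. You use this implicitly.

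Second, the obstacle you flag — that the outgoing property is stated for $f\in\mc{E}'$ while $g_T^{\pm}$ is only Schwartz — is real, and your decomposition resolves it, but more simply than you wrote. Since $\WF(g_T^{\pm})\subset\varphi_{\pm T}(\WFs(B))\subset\{|x|\leq R+4T\}$, choosing $R'>R+4T$ already gives $(1-\tilde\chi)g_T^{\pm}=O(h^\infty)_{\mathcal{S}}$; no radial point estimates are needed for the exterior piece, only the limiting absorption bound $\|R(E\pm i0)\|=O(h^{-1})$ to absorb the $O(h^\infty)$ error. The paper avoids this issue altogether by multiplying inside the time integral by a cutoff $\psi\equiv1$ on $B(0,R+10T)$; then a direct computation produces $(P-E)v=\psi\,e^{-iT(P-E)/h}Bf+O(h^\infty)$ with a compactly supported main term, so the outgoing property applies verbatim. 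Both routes work; the paper's is marginally cleaner, your decomposition is more pedestrian but also valid.
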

\begin{proof}
Let $\psi \in C_c^\infty(\mathbb{R})$ such that $\psi\equiv 1$ on $B(0,R+10T)$. 
Let 
$$v= R(E+i0)Bf-ih^{-1}\int_0^T\psi e^{-it(P-E)/h}Bfdt.$$
 Then, 
\begin{align*}
(P-E)v&=Bf-ih^{-1}\int_0^T(hD_t \psi +[P,\psi])e^{-it(P-E)/h}Bfdt\\
&=\psi e^{-iT(P-E)/h}Bf -ih^{-1}\int_0^T[P,\psi]e^{-it(P-E)/h}Bfdt +O(h^\infty)_{\Psi^{-\infty,-\infty}}f.
\end{align*}
Now, 
$$
\WF(e^{-it(P-E)/h}B)\subset \{(x+2t\xi,\xi)\mid |x|\leq R, |\xi|\in [1/2,2]\}.
$$
In particular, for $t\in[0,T]$, 
$$[P,\psi]e^{-it(P-E)/h}Bf=O(h^\infty)_{C_c^\infty}$$
and we have 
$$
(P-E)v=\psi e^{-iT(P-E)/h}Bf+O(h^\infty)_{C_c^\infty}.
$$
Since $v-R(E+i0)Bf\in C_c^\infty$, for any $\Gamma_-$ a neighborhood of $L_-$ satisfying the assumptions of Lemma~\ref{l:resolve}, $v\in \mc{X}_{\Gamma_-}^{s,k_-,k_+}$ for all $s,k_-$ and $k_+<-\frac{1}{2}$ and hence
$$
v=R(E+i0)(\psi e^{-iT(P-E)/h}Bf+O(h^\infty)_{C_c^\infty}).
$$
But then, 
\begin{align*}
\WF(v)&\subset \{ (x+2t\xi,\xi)\mid t\geq 0,\, (x,\xi)\in \WF(\psi e^{iT(P-E)/h}B)\}\\
&\subset \{(x+2(t+T)\xi,\xi)\mid |x|\leq R,\xi\in[1/2,2]\}\\
&\subset \{(x,\xi)\mid B(0,R+4T)\setminus B(0,T-2R),\,\xi\in[1/2,2]\}.
\end{align*}
In particular, for $T>3R$, $\chi v=O(h^\infty)_{C_c^\infty}$ and hence
$$
\chi R(E+i0)Bf=\frac{i}{h}\int_0^T\chi e^{-it(P-E)/h}Bfdt+O(h^\infty)_{C_c^\infty}
$$
as claimed. The proof for $R(E-i0)$ is identical.

\end{proof}

\section{Completion of the proof of Theorem~\ref{t:main}}
We now complete the proof of the main theorem. Let $P=-h^2\Delta+hW$ where $W$ is admissible (i.e. satisfies~\eqref{e:wForm} and~\eqref{e:estimates}). Let $0<\delta<\delta'<1$. Then by Corollary~\ref{c:gauge}, for any $N>0$, there is $G\in \Psi^0$ self adjoint such that 
$$
P_G:=e^{iG}Pe^{-iG}= -h^2\Delta +hQ+(1-\chi(h^2\Delta-1))h\tilde{W}(1-\chi(h^2\Delta-1))+R_N
$$
where $Q\in \Psi^{-\infty,0}$, $\tilde{W}\in \Psi^1$, are self adjoint, $R_N=O(h^{3N})_{\Psi^{-\infty}}$,  and $\chi \in C_c^\infty $ with $\chi \equiv 1$ on $[-\delta',\delta']$.  In particular, $\tilde{P}_G:=P_G-R_N$ takes the form~\eqref{e:resolveForm}.

Next, note that 
\begin{align*}
\1_{(-\infty,E]}(P)(x,y)&=\langle \1_{(-\infty,E]}(P) \delta_x, 1_{(-\infty,E]}(P)\delta_y\rangle_{L^2}=\langle \1_{(-\infty,E]}(P_G) e^{iG}\delta_x, 1_{(-\infty,E]}(P_G)e^{iG}\delta_y\rangle_{L^2}.
\end{align*}

Now, by~\cite[Lemma 4.2]{PaSh:16}, 
\begin{align*}
\|(\1_{(-\infty, E)}(\tilde{P}_G)-&1_{(\infty,E]}(P_G))f\|_{L^2}\\
&\leq 2\|\1_{[E-\mu,E+\mu]}(\tilde{P}_G)f\|_{L^2}+Ch^{3N}\mu^{-1}(\|\1_{(-\infty,E]}(\tilde{P}_G)f\|_{L^2}+\|(\tilde{P}_G+1)^{-s}f\|_{L^2})
\end{align*}
Let $f\in H^{-\ell}$ and $\mu=h^{N}$. Then for $N,s>\ell$, the last two terms above are bounded by $h^{N}$. Therefore, we need only understand $\1_{(-\infty, E]}(\tilde{P}_G)e^{iG}\delta_x$ and
$
\|\1_{[E-h^{N},E+h^{N}]}(\tilde{P}_G)e^{iG}\delta_x\|_{L^2}.
$

Before, we examine $1_{[a,b]}(\tilde{P}_G)$, we consider the distribution $e^{iG}\delta_x$. By Lemma~\ref{l:eiG}, $e^{iG}\in S^0$, and hence for any $y\in \mathbb{R}$ fixed,
\begin{equation}
\label{e:unitary}
(e^{iG}\delta_y)(x)=\frac{1}{2\pi h}\int e^{\frac{i}{h}(x-y)\xi}b(x,\xi)d\xi
\end{equation}
where $b\in S^0$ with $b\sim \sum_j h^jb_j$, $b_j\in S^{-j}$. In particular, for $|x-y|\geq 1$, and $N>k+1$,
$$
((hD_x)^ke^{iG}\delta_y)(x)=\frac{1}{2\pi h}\int e^{\frac{i}{h}(x-y)\xi}\frac{(-hD_\xi)^N} {|x-y|^{N}}\xi^kb(x,\xi)d\xi = O(|x-y|^{-N}h^{N-1}).
$$
and hence for $\chi \in C_c^\infty$ with $\chi(x) \equiv 1$ on $|x|<R$ and all $|y|<R-1$
$$
(1-\chi)(e^{iG}\delta_y)= O(h^\infty)_{\mc{S}},
$$
where $\mc{S}$ denotes the Schwartz class of functions.
Therefore, 
$$
\1_{[a,b]}(\tilde{P}_G)e^{iG}\delta_x=\1_{[a,b]}(\tilde{P}_G)\chi e^{iG}\delta_x+O(h^\infty)_{C^\infty}.
$$

Next, we consider $\chi \1_{[a,b]}(\tilde{P}_G)\chi$. Let $dE_h$ be the spectral measure for $\tilde{P}_G$. 
\begin{lemma}
\label{l:spectralMeasure}
Let $\chi_1 \in C_c^\infty$ and $\psi\in C_c^\infty$ with $\psi\equiv 1$ on $[-1,1]$. Then, there is $T>0$ such that for $E\in[1-\delta',1+\delta']$, and $h$ small enough,
$$
\chi_1 dE_h\chi_1=\frac{1}{2\pi h} \int_{-T}^T\chi_1 e^{-it(\tilde{P}_G-E)/h}\psi(hD)\chi_1 dt +O(h^\infty)_{\mc{D}'\to C_c^\infty}.
$$
In particular,
$$
\chi_1 dE_h(E)\chi_1=\frac{1}{(2\pi h)^2}\int_{-T}^T\int e^{\frac {i}{h}(-t(|\xi|^2-E)-\langle x-y,\xi\rangle)}a_E(t,x,y,\xi)d\xi dt +O(h^\infty)_{\mc{D}'\to C_c^\infty}
$$
where $a_E\sim\sum_j h^j a_{j,E}$ with $a_{j,E}\in C_c^\infty$. 
\end{lemma}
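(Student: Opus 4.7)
\emph{Proof sketch.} The plan is to combine Stone's formula with Lemma~\ref{l:parametrix} and a standard WKB parametrix for the Schr\"odinger propagator of $\tilde{P}_G$. The first identity is essentially a rewriting of the spectral resolution in terms of the outgoing/incoming resolvent boundary values, while the second follows from constructing the relevant piece of the propagator as a semiclassical Fourier integral operator.

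For the first identity I would use Stone's formula $dE_h(E)=(2\pi i)^{-1}(R(E+i0)-R(E-i0))$. The cutoff $\psi(hD)$ can be inserted on the right with an $O(h^\infty)_{\mc{D}'\to C_c^\infty}$ error: since $\psi\equiv 1$ in a neighborhood of the characteristic set $\{|\xi|^2=E\}$ uniformly for $E\in[1-\delta',1+\delta']$, the operator $\tilde{P}_G-E$ is uniformly elliptic on $\WFs((1-\psi(hD))\chi_1)$, and a standard symbolic parametrix construction produces $\tilde{Q}$, microlocal to that region and independent of the sign of the imaginary part, with
$$
R(E\pm i0)(1-\psi(hD))\chi_1=\tilde{Q}(1-\psi(hD))\chi_1+O(h^\infty)_{\mc{D}'\to C_c^\infty},
$$
so the $(1-\psi(hD))$ contribution drops out after subtraction. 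Applying Lemma~\ref{l:parametrix} to $B=\psi(hD)\chi_1\in\Psi^{\comp,-\infty}$ (after a harmless further cutoff to the annulus $\{1/2\leq|\xi|\leq 2\}$, where $\tilde{P}_G-E$ is anyway elliptic outside it) then gives
$$
\chi_1 R(E\pm i0)\psi(hD)\chi_1=\tfrac{i}{h}\int_0^{\pm T}\chi_1 e^{-it(\tilde{P}_G-E)/h}\psi(hD)\chi_1\,dt+O(h^\infty)_{\mc{D}'\to C_c^\infty},
$$
and combining yields the first identity with prefactor $\tfrac{1}{2\pi h}$.

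For the second identity I construct a WKB parametrix for $U(t):=\chi_1 e^{-it\tilde{P}_G/h}\psi(hD)\chi_1$ uniformly on $|t|\leq T$. By Corollary~\ref{c:gauge} the factor $(1-\chi(-h^2\Delta-1))$ vanishes on the frequency support of $\psi$, so microlocally there $\tilde{P}_G\equiv-h^2\Delta+hQ$ modulo $O(h^\infty)_{\Psi^{-\infty,-\infty}}$ with $Q\in\Psi^{-\infty,0}$. The Hamilton--Jacobi equation for the principal symbol $|\xi|^2$ admits the global generating function $\phi(t,x,y,\xi)=(x-y)\xi-t|\xi|^2$ (no caustics arise for the one-dimensional quadratic flow), and solving the standard transport equations order by order in $h$ produces an amplitude $b(t,x,y,\xi;h)\sim\sum_j h^j b_j$ with each $b_j\in C_c^\infty([-T,T]\times\mathbb{R}^3)$, the compact supports coming from $\psi$ in $\xi$ and from $\chi_1(x)\chi_1(y)$ in the position variables. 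Substituting the resulting representation $U(t)(x,y)=(2\pi h)^{-1}\int e^{i((x-y)\xi-t|\xi|^2)/h}b(t,x,y,\xi;h)\,d\xi+O(h^\infty)_{C^\infty}$ into the first identity, writing $e^{-it(\tilde{P}_G-E)/h}=e^{itE/h}e^{-it\tilde{P}_G/h}$, and setting $a_E:=(2\pi)^{-1}b$, yields the oscillatory integral form. The main technical obstacle is this last WKB/transport construction; the reduction to $-h^2\Delta+hQ$ afforded by Corollary~\ref{c:gauge} on the relevant frequency window keeps the transport equations tractable, and the cutoffs $\chi_1,\psi$ automatically enforce the claimed compact support of each $a_{j,E}$.
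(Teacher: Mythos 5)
Your proposal is correct and follows the same overall scheme as the paper: Stone's formula, the microlocal partition $\psi(hD)+(1-\psi(hD))$, Lemma~\ref{l:parametrix} on the frequency-localized piece, and a parametrix for the propagator (the paper simply cites \cite[Theorem 1.4]{Zw:12}) to get the oscillatory-integral form. The one step where you diverge is in disposing of the $(1-\psi(hD))$ term. The paper sets $v_\pm=R(E\pm i0)(1-\psi(hD))\chi_2 f$, invokes the outgoing/incoming wavefront lemma to conclude $\WF(v_\pm)\cap\{p=E\}=\emptyset$, notes that $v_+-v_-$ solves the homogeneous equation so its wavefront lies in $\{p=E\}$, and concludes $\WF(v_+-v_-)=\emptyset$. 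You instead build one sign-independent elliptic parametrix $\tilde Q$ for $\tilde P_G-E$ microlocalized to $\WFs((1-\psi(hD))\chi_1)$ and observe that both boundary values agree with $\tilde Q(1-\psi(hD))\chi_1$ up to $O(h^\infty)$, so the difference vanishes. That variant does work, but to close it you should say explicitly that the residual term $R(E\pm i0)R_\infty(1-\psi(hD))\chi_1 f$ (where $R_\infty$ is the $O(h^\infty)$ parametrix error) is $O(h^\infty)$ after multiplication by $\chi_1$: this is where Lemma~\ref{l:limabs} is needed, whereas the paper's argument uses the outgoing/incoming wavefront characterization instead. The two routes are of comparable length; yours is slightly more quantitative, the paper's is more geometric and reuses machinery already set up for Lemma~\ref{l:parametrix}. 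Your remark about inserting an extra annular cutoff so that $B$ satisfies the hypothesis $\WFs(B)\subset\{1/2\leq|\xi|\leq2\}$ of Lemma~\ref{l:parametrix} is a correct and useful observation that the paper elides.
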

\begin{proof}
We will use Lemma~\ref{l:parametrix}. In particular, by Stone's formula
$$
\1_{[a,b]}(\tilde{P}_G)=\frac{1}{2\pi i}\int_a^b(R(E+i0)-R(E-i0))dE,
$$
so we need to understand $dE_h:=(2\pi i)^{-1}(R(E+i0)-R(E-i0))$. For this, let $\chi_2 \in C_c^\infty(B(0,R))$ with $\chi_2 \equiv 1$ on $\supp \chi_1$. Then consider
\begin{align*}
\chi dE_h\chi 
&=\frac{1}{2\pi i}\chi_2(R(E+i0)-R(E-i0))(\psi(hD)+1-\psi(hD))\chi_2\\
&=\frac{1}{2\pi h}\int_{-T}^T\chi_2 e^{-it(P-E)/h}\psi(hD)\chi + \frac{1}{2\pi i}\chi_2(R(E+i0)-R(E-i0))(1-\psi(hD))\chi_2
\end{align*}

Let $v_\pm=R(E\pm i0)(1-\psi(hD))\chi_2 f.$ Then, since $(1-\psi(hD))\chi_2 f$ is rapidly decaying, $v_{\pm}$ is semiclassically outgoing/incoming and 
$$
(\tilde{P}_G-E)v_\pm=(1-\psi(hD))\chi_2 f.
$$
In particular, since 
$\WFs ((1-\psi(hD))\chi_2 f)\cap \{p=E\}=\emptyset$, we have
$
\WFs(v_\pm)\cap \{p=E\}=\emptyset.
$

Now, 
$$
(\tilde{P}_G-E)(v_+-v_-)=0\qquad\Rightarrow\qquad 
\WF(v_+-v_-)\setminus \{p=E\}=\emptyset.
$$
In particular, since, a priori both terms have $\WF(v_\pm)\cap \{p=E\}=\emptyset$, we obtain
$$
\WF(v_+-v_-)=\emptyset
$$
and hence
$$
 \frac{1}{2\pi i}\chi_2(R(E+i0)-R(E-i0))(1-\psi(hD))\chi_2 f=O(h^\infty)_{C_c^\infty}.
$$
Therefore, 
$$
\chi_2i dE_h\chi_2=\frac{1}{2\pi h} \int_{-T}^T\chi_2 e^{-it(P-E)/h}\psi(hD)\chi_2 dt +O(h^\infty)_{\mc{D}'\to C_c^\infty}.
$$
The lemma follows from the oscillatory integral formula for $e^{it(P-E)/h}$ (\cite[Theorem 1.4]{Zw:12}).
\end{proof}

As a corollary of Lemma~\ref{l:spectralMeasure}, we obtain for $t,s\in[1-\delta,1+\delta]$,
$$
|(hD_x)^\alpha (hD_y)^\beta \chi_1 \1_{(s,t]}(\tilde{P}_G)\chi_1(x,y)|\leq C_{\alpha\beta}h^{-2}|t-s|.
$$
In particular, this implies
$$
\|\1_{[E-h^{N},E+h^{N}]}(\tilde{P}_G)e^{iG}\delta_x\|_{L^2}\leq Ch^{N-\ell}
$$
for some $\ell>0$ and hence it only remains to have an asymptotic formula for $\chi_1\1_{(-\infty,E]}(\tilde{P}_G)\chi_1$.

Let $\hat{\rho}\in C_c^\infty((-2T,2T))$ with $\hat{\rho} \equiv 1$ on $[-T,T]$ and put $\rho_{h,k}(t)=h^{-k}\rho(th^{-k}).$ Define
\begin{gather}
\label{e:apple}
R_1(E,x,y):=\chi_1(\rho_{h,k}*1_{(-\infty,\cdot]}(\tilde{P}_G)-1_{(-\infty,E]}(\tilde{P}_G))\chi_1(x,y)\\
R_2(E, x,y):=\chi_1(\rho_{h,k}-\rho_{h,1})*1_{(-\infty,\cdot]}(\tilde{P}_G)\chi_1(E,x,y).\label{e:pie}
\end{gather}
Then, we will show for $E\in[1-\delta/2,1+\delta/2]$
\begin{gather}
\label{e:peach}
|(hD_x)^\alpha (hD_y)^\beta R_1(E,x,y)|=O_{\alpha\beta}(h^{k-2}),\qquad |(hD_x)^\alpha (hD_y)^\beta R_2(E,x,y)|=O_{\alpha\beta}(h^{k-2})
\end{gather}

In order to show the first inequality in~\eqref{e:peach} we recall that standard estimates also show that there is $M>0$ such that for $t\in \mathbb{R}$
$$
|(hD_x)^\alpha (hD_y)^\beta \chi_1 \1_{(-\infty,t]}(\tilde{P}_G)\chi_1(x,y)|\leq C_{\alpha \beta}h^{-M}\langle t\rangle^M.
$$
Then, for $E\in[1-\delta/2,1+\delta/2]$
\begin{align*}
&|(hD_x)^\alpha(hD_y)^\beta R_1(E, x,y)|\\
&=\Big|\int h^{-k}\rho(sh^{-k})(hD_x)^\alpha (hD_y)^\beta\chi_1(\1_{(E-s,E]}(\tilde{P}_G))\chi_1ds\Big|\\
&\leq \Big|\int_{|s|\leq \delta/2} h^{-k}\langle sh^{-k}\rangle^{-N} C_\alpha \beta h^{-2}|s|ds\Big|+\Big|\int_{|s|\geq \delta/2} h^{-k}\langle sh^{-k}\rangle^{-N} C_\alpha \beta h^{-M}|s|^Mds\Big|\\
\end{align*}
Choosing $N$ large enough, the first inequality in~\eqref{e:peach} follows.

To obtain the second inequality, we observe that, since $\tilde{P}_G$ is bounded below, 
\begin{align*}
R_2(E)&=\chi_1 \int_{-\infty}^E (h^{-k}(\rho((s-\tilde{P}_G)/h^k)-h^{-1}(\rho((s-\tilde{P}_G)/h))\chi_1\\
&=\frac{1}{2\pi i}\int t^{-1}\hat{\rho}(th^{k-1})(1-\hat{\rho}(t))\chi_1e^{it(E-\tilde{P}_G)/h}\chi_1dt=\chi_1f_{h}\Big(\frac{E-\tilde{P}_G}{h}\Big)\chi_1
\end{align*}
where 
$$
f_h(\lambda)=\frac{1}{2\pi i}\int t^{-1}\hat{\rho}(th^{k-1})(1-\hat{\rho}(t))e^{it\lambda}dt
$$
In particular, note that 
$
|f_h(\lambda)|\leq C_N\langle \lambda \rangle ^{-N}. 
$
Now, let $\psi\in C_c^\infty(-\delta,\delta)$ with $\psi \equiv 1$ near $0$. Then,
\begin{align*}
\chi_1 f_{h}\Big(\frac{E-\tilde{P}_G}{h}\Big)\chi_1&=\int f_h\Big(\frac{E-s}{h}\Big)\chi_1 dE_h(s)\chi_1\\
&=\int\psi(E-s) f_h\Big(\frac{E-s}{h}\Big)\chi_1 dE_h(s)\chi_1 +\int(1-\psi(E-s)) f_h\Big(\frac{E-s}{h}\Big)\chi_1dE_h\chi_1(s)\\
&= \int f_h\Big(\frac{E-s}{h}\Big)\psi(E-s)\chi_1 dE_h(s)\chi_1+O(h^\infty)_{\mc{D}'\to C_c^\infty}\\
&=-\frac{1}{2\pi }\int_{-T}^T\int f_h(w)  \chi_1e^{-it(\tilde{P}_G-E+hw)/h}\psi(hD)\chi_1dwdt+O(h^\infty)_{\mc{D}'\to C_c^\infty}\\
&=\frac{1}{2\pi }\int_{-T}^T\int it^{-1}\hat{\rho}(th^{k-1})(1-\hat{\rho}(t)) \chi_1e^{-it(\tilde{P}_G-E)/h}\psi(hD)\chi_1dt+O(h^\infty)_{\mc{D}'\to C_c^\infty}\\
&=O(h^\infty)_{\mc{D}'\to C_c^\infty}.
\end{align*}
Therefore, the second inequality in~\eqref{e:peach} holds.

Together, the inequalities in~\eqref{e:peach} imply that 
$$
\chi_1 (\1_{(-\infty,E]}(\tilde{P}_G)-\rho_{h,1}*\1_{(-\infty,\cdot]}(\tilde{P}_G)(E))\chi_1=O(h^\infty)_{\mc{D}'\to C_c^\infty}
$$
and we finish the proof of the main theorem by observing that 
\begin{equation}
\begin{aligned}
&\chi_1\rho_{h,1}*\1_{(-\infty,\cdot]}(\tilde{P}_G)(E)\chi_1=\frac{1}{2\pi h}\int_{-\infty}^E \int \hat{\rho}(t)\chi_1e^{it(\mu-\tilde{P}_G)/h}\chi_1dtd\mu \\
&=\frac{1}{(2\pi h)^2}\int_{-\infty}^E \int \hat{\rho}(t)\chi_1(x)e^{i(t(\mu-|\xi|^2)+(x-y)\xi)/h}a(x,y,\xi)\chi_1(y)d\xi dtd\mu
\end{aligned}
\end{equation}
where $a\sim \sum_j a_jh^{j}$ and $a_j\in C_c^\infty$. Conjugating by $e^{iG}$ and using~\eqref{e:unitary} completes the proof.

\appendix
\section{Properties of $s_{k,\semi}$}
\label{a:properties}

In this appendix, we collect the proofs of the required properties of $s_{k,\semi}$.

\begin{proof}[Proof of lemma~\ref{l:basicBound}]
The case $k=1,0$ are clear with $N_0=0$, $N_1=1$. Suppose~\eqref{e:basicBound} holds for $k=n-1$. Then,
$$
s_{n,\semi}(\theta,,\mc{W})=\begin{cases}\frac{1}{|\sum_{i=1}^k\theta_i|}\sum_{p\in Sym(k)}\sum_{|\alpha|=k,\alpha_i\leq k/2}s_{\alpha,\semi}(p(\theta))&\sum_{i=1}^k\theta_i\neq 0\\0&\sum_i\theta_i=0.\end{cases}
$$
The statement is trivial when $\sum_i\theta_i=0$. Therefore, we assume the opposite. In that case
\begin{align*}
&s_{n,\semi}(\theta,\mc{W})\\
&\leq\frac{1}{|\sum_{i=1}^n\theta_i|}\sum_{p\in Sym(n)}\sum_{|\alpha|=n,\alpha_i\leq n/2}\prod_{i=1}^jC_{|\alpha_i|}\frac{\prod_{\ell=1}^{|\alpha_i|}\|w_{p(\theta)_{\beta_i(\alpha)+\ell}}\|_{\semi}}{\inf \{|\omega|^{N_{|\alpha_i|}}\mid\omega\in\{p(\theta)_{\beta_i(\alpha)+1},0\}+\dots+\{p(\theta)_{\beta_{i+1}(\alpha)},0\}\setminus 0\}}\\
&\leq \frac{1}{|\sum_{i=1}^n\theta_i|}\sum_{p\in Sym(k)}\sum_{|\alpha|=n,\alpha_i\leq n/2}\prod_{\ell=1}^{n}\|w_{\theta_\ell}\|_{\semi}\prod_{i=1}^j\frac{C_{|\alpha_i|}}{\inf \{|\omega|^{N_{|\alpha_i|}}\mid\omega\in\{\theta_{1},0\}+\dots+\{\theta_{n},0\}\setminus 0\}}
\end{align*}
Then, defining $N_0=0$, $N_1=1$ and
$$
N_k:=\sup\big\{ 1+\sum_i N_{|\alpha_i|}\mid |\alpha|=n,\,|\alpha_i|\leq \frac{n}{2}\},
$$
we have
$$
s_{n,\semi}(\theta,\mc{W})\leq \frac{\prod_{\ell=1}^{n}\|w_{\theta_\ell}\|_{K}}{\inf \{|\omega|^{N_{k}}\mid\omega\in\{\theta_{1},0\}+\dots+\{\theta_{n},0\}\setminus 0\}}\sum_{p\in Sym(k)}\sum_{|\alpha|=n,\alpha_i\leq n/2}\prod_{i=1}^jC_{|\alpha_i|},
$$
and hence the lemma follows by induction.
\end{proof}

\begin{proof}[Proof of Lemma~\ref{l:induct}]
For $k=0$ the claim is clear. For $k=1$, observe that
$$
s_{1,\semi}(\theta_1+\dots+\theta_n,\tilde{\mc{W}})=\begin{cases}\frac{\|\tilde{w}_{\theta_1\dots\theta_n}\|_{\semi}}{|\sum_{i=1}^n\theta_i|}&\sum_i\theta_i\neq 0\\0&\sum_i\theta_i=0\end{cases}.
$$
Note that 
$$
\frac{\|\tilde{w}_{\theta_1\dots\theta_n}\|_{\semi}}{|\sum_{i=1}^n\theta_i|}\leq \frac{1}{|\sum_{i=1}^n\theta_i|}\prod_{i=1}^n\frac{\|w_{\theta_i}\|_{\semi'}}{|\theta_i|}\leq s_{n,\semi'}((\theta_1,\dots,\theta_n),\mc{W}).
$$

Suppose that the claim holds for $k-1\geq 1$. Then, when $\sum_{i}\sum_{j=1}^k(\theta_i)_j\neq 0$
\begin{align*}
&s_{k,\semi}(\theta_1+\dots +\theta_n,\tilde{\mc{W}})\\
&= \frac{1}{|\sum_{i,j}(\theta_i)_j|}\sum_{p\in Sym(k)}\sum_{|\alpha|=k,\alpha_i\leq k/2}\prod_{i=1}^j s_{\alpha_i,\semi}((p(\theta_1+\dots+\theta_n))_{\alpha,i},\tilde{\mc{W}})\\
&\leq  \frac{1}{|\sum_{i,j}(\theta_i)_j|}\sum_{p\in Sym(k)}\sum_{|\alpha|=k,\alpha_i\leq k/2}\prod_{i=1}^j s_{n\alpha_i,\semi}((p(\theta_1))_{\alpha,i},\dots (p(\theta_n))_{\alpha,i}),\mc{W})\\
&\leq  \frac{1}{|\sum_{i,j}(\theta_i)_j|}\sum_{p\in Sym(nk)}\sum_{|\alpha|=nk,\alpha_i\leq nk/2}\prod_{i=1}^j s_{\alpha_i,\semi}((p(\theta_1,\dots \theta_n))_{\alpha,i}),\mc{W})\\
&=s_{nk,\semi'}(\theta_1,\dots ,\theta_n,\mc{W})\\
\end{align*}

\vspace{-1.3cm}
\end{proof}

\section{Examples with infinitely many embedded eigenvalues}
\label{a:example}

We now construct some examples to which our main theorem applies that, nevertheless, have arbitrarily large eigenvalues.

\begin{theorem}
\label{t:ex1}
Let $\omega\in \mathbb{R}^d$ satisfy the diophantine condition~\eqref{e:diophantine} and $\Theta=\mathbb{Z}^d\cdot \omega$.  Then there is $W\in C^\infty(\mathbb{R};\mathbb{R})$ satisfying the assumptions of Theorem~\ref{t:almostPeriodic} and such that 
$
\{\tfrac{\theta^2}{4}\mid \theta\in \Theta\setminus \{0\}\}
$
is contained in the point spectrum of $-\Delta+W$. 
\end{theorem}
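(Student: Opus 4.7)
The plan is to construct $W$ as a spatially separated superposition of Wigner--von Neumann (WvN) potentials, one for each $\theta\in\Theta\setminus\{0\}$.

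\emph{Building blocks.} For $k>0$ and $a>0$, letting $g_k(y):=2ky-\sin(2ky)$, set
\[
\psi_{k,a}(y):=\frac{\sin(ky)}{1+a\,g_k(y)^2},\qquad V_{k,a}(y):=\frac{\psi_{k,a}''(y)}{\psi_{k,a}(y)}+k^2.
\]
A direct computation (exploiting that $g_k'(y)=4k\sin^2(ky)$ vanishes to second order at zeros of $\sin(ky)$) shows that $V_{k,a}\in C^\infty(\mathbb{R};\mathbb{R})$ is bounded with $V_{k,a}(y)=O(|y|^{-1})$ at infinity, its leading oscillation sits at frequency $2k$, $\psi_{k,a}\in L^2(\mathbb{R})$, and $(-\partial_y^2+V_{k,a})\psi_{k,a}=k^2\psi_{k,a}$. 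Crucially, a routine asymptotic estimate gives $\|\partial_y^\ell V_{k,a}\|_{L^\infty}\lesssim \sqrt{a}\,k^{2+\ell}$ as $a\to 0^+$ for every $\ell\ge 0$, so $V_{k,a}$ can be made arbitrarily small in all $C^N$-norms while keeping $k^2$ as an exact eigenvalue.

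\emph{Superposition.} Enumerate $\Theta\setminus\{0\}=\{\theta_n\}_{n\ge 1}$ with $\theta_n=\mathbf{n}_n\cdot\omega$ for $\mathbf{n}_n\in\mathbb{Z}^d$, and set $k_n:=\theta_n/2$, so that $2k_n=\theta_n\in\Theta$. Choose a rapidly increasing sequence of centers $x_n$ and small amplitudes $a_n>0$ satisfying $\sqrt{a_n}\,k_n^{2+\ell}\langle x_n\rangle^{\ell}\le\langle\mathbf{n}_n\rangle^{-N}$ for all $\ell,N\ge 0$ (and $n$ large enough depending on $\ell, N$), and define
\[
W(x):=\sum_{n\ge 1}V_{k_n,a_n}(x-x_n).
\]
The series converges in $C^\infty$, $W$ is real-valued, and collecting the Bohr--Fourier expansions of the individual pieces by common frequencies $\theta\in\Theta$ (using that each $V_{k_n,a_n}$ has frequency content in $2k_n\mathbb{Z}\subset\Theta$) yields a representation $W(x)=\sum_{\theta\in\Theta}e^{i\theta x}w_\theta(x)$ with $|\partial_x^k w_\theta(x)|\le C_{k,N}\langle x\rangle^{-k}\langle\mathbf{n}_\theta\rangle^{-N}$, verifying the hypotheses of Theorem~\ref{t:almostPeriodic}.

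\emph{Exact embedded eigenvalues.} For each $n$, the function $\psi_n(x):=\psi_{k_n,a_n}(x-x_n)$ is an exact eigenfunction of $-\partial_x^2+V_{k_n,a_n}(\cdot-x_n)$ at energy $\theta_n^2/4$, but only an approximate one of $-\partial_x^2+W$, since
\[
(-\partial_x^2+W-k_n^2)\psi_n=\sum_{m\neq n}V_{k_m,a_m}(\cdot-x_m)\,\psi_n.
\]
To upgrade $\theta_n^2/4$ to an exact eigenvalue, I would run an inductive construction: having built $W^{(n-1)}$ with exact eigenvalues $\{\theta_m^2/4\}_{m<n}$ and corresponding eigenfunctions localized near $x_1,\dots,x_{n-1}$, add $V_{k_n,a_n}(\cdot-x_n)$ together with a small compactly supported adjustment near $x_n$, and use that the WvN family $\{V_{k_n,a}\}_{a>0}$ depends smoothly on $a$ while always carrying $k_n^2$ in its spectrum, to tune $a_n$ (by an implicit function argument) so that $\theta_n^2/4$ is an exact eigenvalue of $-\partial_x^2+W^{(n)}$. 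The rapid spatial separation of the $x_n$ ensures the previously constructed eigenvalues are perturbed by at most $O(\langle x_{n+1}-x_n\rangle^{-1})$, which the same implicit-function argument absorbs at later stages.

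The \textbf{main obstacle} is the exact-eigenvalue preservation in this last step: embedded eigenvalues in the continuous spectrum are generically destroyed by perturbation. The argument is saved by two features: the rapid spatial separation, which quantitatively bounds the cross-coupling $\|V_{k_m,a_m}(\cdot-x_m)\,\psi_n\|_{L^2}\lesssim \sqrt{a_m}\,k_m^2\,|x_n-x_m|^{-1}\,\|\psi_n\|_{L^2}$; and the one-parameter freedom in $a_n$ within the WvN family at fixed eigenvalue $k_n^2$, giving enough flexibility at each inductive step to cancel the perturbative shift produced by the other pieces. With these ingredients, the inductive scheme converges and delivers $W$ with $\{\theta^2/4:\theta\in\Theta\setminus\{0\}\}$ contained in its point spectrum.
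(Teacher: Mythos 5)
Your construction differs from the paper's, and I think there is a genuine gap at the step you yourself flag as the main obstacle. The paper (Theorem~\ref{t:eigenvalue}, following Simon) also uses Wigner--von~Neumann-type oscillating tails, but it works on the half-line with a Dirichlet condition at $0$ (then extends to $\mathbb{R}$ by parity), supports the oscillating pieces $\Delta L_n$ on $[R_n,\infty)$ with $R_n\to\infty$ (scale-separation rather than translation), and --- crucially --- adds compactly supported corrections $\Delta S_n$ living in shrinking dyadic windows $(2^{-n},2^{-n+1})$ near the origin. The whole iterative scheme closes because Lemma~\ref{l:rotate} shows that a single $\Delta S_n$ in such a window has enough degrees of freedom to realize \emph{any} small prescribed shift of the Pr\"ufer angles of the first $n$ solutions simultaneously; this is the device that repairs all previously constructed eigenvalues at once after each new oscillating tail is added.

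Your scheme lacks an analogue of this device. When you pass from $W^{(n-1)}$ to $W^{(n)}$ by adding $V_{k_n,a_n}(\cdot-x_n)$, you perturb every one of the ODEs $(-\partial_x^2+W-\theta_m^2/4)u=0$ for $m<n$, and since each $\theta_m^2/4$ sits in the essential spectrum, the $L^2$-at-both-ends condition is non-generic: on the line it is a two-dimensional condition per $m$ (decay at $+\infty$ and at $-\infty$, not matched by any parity because your perturbation $\sum_{m\ne n}V_{k_m,a_m}(\cdot-x_m)$ is not symmetric about $x_m$), or one-dimensional on the half-line with a boundary condition. Tuning the single scalar $a_n$ (plus one compactly supported adjustment localized near $x_n$, which cannot see what happens near $x_m$ for $m<n$) gives you at most $O(1)$ degrees of freedom at stage $n$, against the $\Theta(n)$ constraints needed to restore the earlier eigenvalues. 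The remark that the cross-coupling is $O(|x_n-x_m|^{-1})$ does not help here: decay of the perturbation is not enough to preserve an embedded eigenvalue --- you need a genuine correction mechanism with enough parameters, and you also need a verified non-degeneracy condition to invoke the implicit function theorem, which is not supplied. (As a secondary point, the claim that the $\psi_m$ are ``localized near $x_m$'' is not accurate; WvN eigenfunctions decay only like $|x-x_m|^{-1}$, so all the pieces overlap globally, and the cross-coupling estimate needs more care than stated.) To rescue your approach along the paper's lines you would need to restore the half-line/parity setup and introduce, at each stage $n$, a compactly supported correction near the origin with $n$ degrees of freedom controlled through a Pr\"ufer-angle argument of the type in Lemma~\ref{l:rotate}.
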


\begin{theorem}
\label{t:ex2}
Let $\{m_n\}_{n=1}^\infty\subset \mathbb{Z}_+$ and $\Theta$ as in Theorem~\ref{t:limitPeriodic}. Then there is $W\in C^\infty(\mathbb{R};\mathbb{R})$ satisfying the assumptions of Theorem~\ref{t:limitPeriodic} and such that for all $n$, $\frac{m_n^2}{4n^2}$ is contained in the point spectrum of $-\Delta+W$. In particular, if $\mathbb{Q}\cap \mathbb{R}_+= \{\frac{m_n}{n}\}_{n=1}^\infty$, then this operator has dense pure point spectrum.
\end{theorem}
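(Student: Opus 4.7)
The plan is a scaled, spatially separated Wigner--von Neumann construction. Recall first the classical Wigner--von Neumann potential
\[
V^{(1)}(x) = \frac{-32 \sin x\,[g^3 \cos x - 3 g^2 \sin^3 x]}{(1+g^2)^2}, \qquad g(x) = 2x - \sin 2x,
\]
which is smooth, of order $O(|x|^{-1})$, and has $L^2$ eigenfunction $\psi^{(1)}(x) = \sin(x)/(1+g(x)^2)$ of $-\Delta + V^{(1)}$ at the embedded energy $E=1$. Rescaling $y = \theta x/2$ produces, for each $\theta \neq 0$, the potential $V^{(\theta)}(x) := (\theta/2)^2 V^{(1)}(\theta x/2)$ with $L^2$ eigenfunction $\psi^{(\theta)}(x) := \psi^{(1)}(\theta x/2)$ at energy $\theta^2/4$. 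The leading asymptotic $V^{(\theta)}(x) \sim -4\theta \sin(\theta x)/x$ shows that the frequency-$\theta$ content of $V^{(\theta)}$ has pointwise size of order $\theta/|x|$.

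A direct summation $\sum_\theta V^{(\theta)}$ violates admissibility because its $\theta$-coefficient grows with $\theta$ rather than decaying. I would remedy this by spatial separation: enumerate $\Theta \setminus \{0\} = \{\theta_n\}_{n \geq 1}$, choose a very rapidly growing sequence of centers $x_n \to \infty$, and pick smooth cutoffs $\chi_n$ with pairwise-disjoint supports $I_n$ in which $|x-x_n|$ is large enough to beat the $\theta_n$-prefactor. Setting
\[
W(x) := \sum_n \chi_n(x-x_n)\, V^{(\theta_n)}(x-x_n),
\]
at most one term is nonzero at any given $x$, and the frequency-$\theta_n$ coefficient $w_{\theta_n}(x)$ is supported in $I_n$ with pointwise size $\lesssim \theta_n/\langle x-x_n\rangle$. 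Placing $I_n$ inside $\{|x - x_n| \geq \theta_n \langle n\rangle^{N}\}$ and pushing $|x_n|$ out fast enough yields $|\partial_x^k w_{\theta_n}(x)| \leq C_{k,N}\langle x\rangle^{-k}\langle n\rangle^{-N}$ for every $k,N$, verifying admissibility.

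The cutoff destroys the exact eigenvalue, so the last step is to restore it by a Lyapunov--Schmidt / fixed-point argument. Start from a carefully chosen approximate eigenfunction $\tilde\psi_n$ built from a modified piece of $\psi^{(\theta_n)}(\cdot - x_n)$ adapted to the cutoff region, so that $r_n := (-\Delta + W - \tfrac{\theta_n^2}{4})\tilde\psi_n$ is supported near $\partial I_n$ and polynomially small in $\mathrm{dist}(x_n,\partial I_n)$. Seek an exact eigenfunction $\psi_n = \tilde\psi_n + \phi_n$; this reduces to solving $(-\Delta + W - \tfrac{\theta_n^2}{4})\phi_n = -r_n$ on the $L^2$-orthogonal complement of $\tilde\psi_n$. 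The limiting absorption principle of Section~5, applied to the operator with the $n$-th cutoff term removed (for which $\theta_n^2/4$ lies in the absolutely continuous spectrum and the resolvent has limits), provides a bounded resolvent on this complement; a contraction argument then produces $\phi_n$, and a one-parameter tuning (e.g.\ of the amplitude constant in the Wigner--von Neumann formula) pins the eigenvalue exactly at $\theta_n^2/4$.

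The hardest step is the Lyapunov--Schmidt inversion: one must control the resolvent of $-\Delta + W$ at the embedded energy $\theta_n^2/4$ on the orthogonal complement of $\tilde\psi_n$ in the presence of infinitely many other cutoff Wigner--von Neumann packets with frequencies arbitrarily close to $\theta_n$, so that cross-resonances do not destroy invertibility. Fast-enough divergence of $\{x_n\}$ and a careful adaptation of the WvN eigenfunction to the cutoff region should suppress these cross-resonances, and the propagation and radial-point estimates of Section~5 provide the necessary a priori bounds. For Theorem~\ref{t:ex2} the same construction applies with $\theta_n = m_n/n$, and the density of $\{m_n/n\}$ in $\mathbb{R}_+$ (when $\{m_n/n\} = \mathbb{Q}\cap \mathbb{R}_+$) immediately yields the dense pure point spectrum.
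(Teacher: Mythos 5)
There is a genuine gap at the heart of your construction: a \emph{compactly supported} (cut-off) Wigner--von Neumann packet cannot produce an embedded eigenvalue at positive energy, and the Lyapunov--Schmidt step you propose cannot restore it. On the long gaps between consecutive supports $I_n$, $I_{n+1}$ your potential vanishes identically, so any solution at energy $\theta_n^2/4>0$ is a free oscillation $ae^{i\theta_n x/2}+be^{-i\theta_n x/2}$ there; since $\sup_{I_m}|W|\lesssim \langle m\rangle^{-N}$, the transfer matrix across each subsequent packet is a tiny perturbation of the identity, the oscillation amplitude stays bounded below, and the solution cannot lie in $L^2$. The Wigner--von Neumann mechanism produces the $\langle x\rangle^{-2}$ decay of the eigenfunction precisely because the $\sin(2\kappa x)/x$ tail acts coherently on the \emph{entire} half-line $[R,\infty)$; truncating it destroys this. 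Your proposed repair via the limiting absorption principle does not work either: at an energy embedded in the a.c.\ spectrum the resolvent exists only between weighted spaces ($H^{0,k_-}\to H^{0,k_+}$ with $k_+<-\tfrac12$), so the correction $\phi_n=-R(\theta_n^2/4\pm i0)r_n$ is merely outgoing/incoming and bounded, not $L^2$; tuning one amplitude parameter cannot kill the non-decaying asymptotics. (There is also a secondary issue: the rescaled $V^{(1)}$ carries harmonics at $0,\pm\theta_n,\pm2\theta_n$, and $2\theta_n$ need not lie in $\Theta=\{\pm m_n/n\}\cup\{0\}$, though this is a repairable bookkeeping matter.)

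The paper's construction (Theorem~\ref{t:eigenvalue}, following Simon) avoids exactly this trap. It \emph{superimposes} untruncated tails $\Delta L_n(x)=4\kappa_n\chi(x/R_n)x^{-1}\sin(2\kappa_n x+\varphi_n)$, all supported on overlapping half-lines $[R_n,\infty)$, and obtains the decay in $n$ of the coefficients $w_{\pm2\kappa_n}$ not by spatial separation but by pushing the onset radius $R_n$ out so that the amplitude $\kappa_n R_n^{-1}$ is rapidly small. Existence of the $L^2$ eigenfunctions then comes from the ODE asymptotics of~\cite[Theorem 3]{Si:97} (conditional integrability of $e^{\pm2i\kappa_m x}\tilde L_{n,m}$ with $O(|x|^{-1})$ primitives), an inductive stability estimate $\N{u_n^{(m)}-u_n^{(m-1)}}\le 2^{-m}$, and a Pr\"ufer-angle rotation lemma using small compactly supported corrections $\Delta S_m$ near the origin to enforce $u_n(0)=0$. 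If you want to salvage your outline, you would need to keep each packet active on an entire half-line rather than a compact interval, at which point you are essentially back to the paper's argument.
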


Theorems~\ref{t:ex1} and~\ref{t:ex2} follow easily from the following theorem.
\begin{theorem}
\label{t:eigenvalue}
Let $\{\kappa_n\}_{n=1}^\infty$ be an arbitrary sequence of positive real numbers. Then there is $W\in C^\infty(\mathbb{R};\mathbb{R})$ such that $\kappa_n^2$ is an eigenvalue of $-\Delta +W$. Moreover, we can find $W$ such that 
$$
W=\sum_n e^{2i\kappa_n x}w_{2\kappa_n}(x)+\sum_n e^{-2i\kappa_n x}w_{-2\kappa_n}(x) + w_0(x)
$$
where $w_0\in C_c^\infty$ and for any $N$,
\begin{equation}
\label{e:rapidDecay}
|\partial_x^k w_{\pm 2\kappa_n}(x)|\leq C_N\langle n\rangle^{-N}\langle \kappa_n\rangle^{-N}\langle x\rangle^{-k}.
\end{equation}
\end{theorem}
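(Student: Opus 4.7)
The proof is by an explicit Wigner--von Neumann construction via Pr\"ufer variables, with the eigenfunctions placed on a widely separated sequence of centers. Choose $\{R_n\}_{n=1}^\infty$ increasing so rapidly (for instance $R_n = 2^{2^n}$) that different centers effectively decouple, and amplitudes $A_n>0$ decaying faster than any polynomial in $n,\kappa_n$ and $R_n$. For each $n$, introduce a potential contribution
\[
V_n(x) := e^{2i\kappa_n x} v_n(x) + e^{-2i\kappa_n x}\,\overline{v_n(x)}, \qquad v_n(x) := iA_n\tanh(x-R_n),
\]
so that $V_n(x) = -2A_n \tanh(x-R_n)\sin(2\kappa_n x)$ is smooth, bounded and real. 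The bounds \eqref{e:rapidDecay} for $w_{\pm 2\kappa_n}$ hold automatically: $|w_{\pm 2\kappa_n}|\leq A_n$ and $|\partial_x^k w_{\pm 2\kappa_n}|\lesssim A_n \cosh^{-2}(x-R_n)$ for $k\geq 1$, which is dominated by $C_N\langle n\rangle^{-N}\langle\kappa_n\rangle^{-N}\langle x\rangle^{-k}$ once $A_n$ is chosen sufficiently small.

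For fixed $n$, I analyze $(-\partial_x^2 + V_n)u = \kappa_n^2 u$ in Pr\"ufer variables $u=\rho\sin\theta$, $u' = \kappa_n\rho\cos\theta$, which satisfy
\[
\theta' = \kappa_n - V_n\sin^2\theta/\kappa_n, \qquad (\ln\rho)' = V_n\sin(2\theta)/(2\kappa_n).
\]
Writing $\theta(x) = \kappa_n x + \phi(x)$ with $\phi$ slowly varying, direct expansion gives
\[
V_n\sin(2\theta) = -\Im(v_n e^{-2i\phi}) + \Im(v_n e^{4i\kappa_n x + 2i\phi}),
\]
whose second term is non-resonant and contributes only a bounded oscillation to $\ln\rho$. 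The resonant term $-\Im(v_n e^{-2i\phi}) = -A_n\tanh(x-R_n)\cos(2\phi)$ changes sign at $R_n$ and tends to $\mp A_n\cos(2\phi)$ at $\pm\infty$, so integration yields $\ln\rho(x)\sim -\tfrac{A_n\cos(2\phi)}{2\kappa_n}|x-R_n|$ for $|x-R_n|$ large. Hence $\rho$ decays exponentially and $u = \rho\sin\theta\in L^2$. The \emph{exact} eigenvalue condition at $\kappa_n^2$ then amounts to matching the two exponentially decaying Pr\"ufer solutions at $R_n$, i.e.\ a two-parameter implicit function problem in the real and imaginary parts of $A_n$, whose non-degeneracy is a standard feature of Wigner--von Neumann arguments.

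Finally, set $W := \sum_n V_n + w_0$ with $w_0\in C_c^\infty$ absorbing a small residual correction. The main obstacle is to verify that each $\phi_n$ is an \emph{exact} eigenfunction of $-\partial_x^2+W$, not merely of $-\partial_x^2+V_n$: by unique continuation $\phi_n$ cannot be compactly supported and its tails feel the other $V_m$'s. Resolve this by a fixed-point argument, parameterizing the complex amplitudes $\{A_n\}$ in a rapidly decaying sequence space and viewing the joint system of eigenvalue conditions as a contraction whose constant is controlled by the extreme separation of $\{R_n\}$ and the exponential decay of each Pr\"ufer amplitude $\rho_n$ away from $R_n$; the non-degeneracy of the linearization follows from simplicity of $\kappa_n^2$ as an eigenvalue of $-\partial_x^2+V_n$, after which the contraction mapping theorem provides the simultaneous solution.
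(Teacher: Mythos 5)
Your proposal shares the Wigner--von Neumann/Pr\"ufer-variable spirit of the paper's argument, but it has a gap at its core that I do not think can be repaired as written. The amplitude you attach to the frequency $2\kappa_n$ is $\tanh(x-R_n)$, which does \emph{not} decay as $|x|\to\infty$: each $V_m$ is a globally supported oscillatory perturbation of size $A_m$. Consequently the premise that "different centers effectively decouple" because the $R_n$ are widely separated is false, and the contraction constant you invoke ("controlled by the extreme separation of $\{R_n\}$ and the exponential decay of each $\rho_n$ away from $R_n$") has no source of smallness: when you study the $n$-th channel, the earlier terms $V_1,\dots,V_{n-1}$ are everywhere present with amplitudes $A_1,\dots,A_{n-1}\gg A_n$, while the spectral gap you are relying on at energy $\kappa_n^2$ has width only $O(A_n)$. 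Showing that this gap (and hence the exponential dichotomy and the eigenvalue) survives the superposition of infinitely many non-decaying oscillations at \emph{arbitrary, possibly resonant} frequencies $\kappa_m$ is essentially the full difficulty of the paper's main theorem; it cannot be dismissed as a routine fixed point. A second, smaller gap is the matching/non-degeneracy step: the assertion that the two decaying Pr\"ufer solutions can be matched at $R_n$ by varying $A_n$ is exactly the point that needs proof, and the role of your $w_0$ ("absorbing a small residual correction") is undefined.

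The paper avoids both problems by taking amplitudes that decay in $x$: it sets $\Delta L_n(x)=4\kappa_n\chi_n(x)x^{-1}\sin(2\kappa_n x+\varphi_n)$, so the cross terms between distinct frequencies are conditionally integrable with $O(x^{-1})$ amplitude (this is the content of the conditions~\eqref{e:condition}), and \cite[Theorem~3]{Si:97} then yields solutions with the asymptotics $u_n\sim\sin(\kappa_n x+\tfrac12\varphi_n)(1+|x|)^{-1}\in L^2$ together with the stability needed to add the terms one at a time (choosing $R_n$ large makes the new term perturb the previously built eigenfunctions by less than $2^{-m}$). The boundary condition is handled by reducing to the half line with $u(0)=0$ and odd reflection, and by adjusting the Pr\"ufer angle at the origin with compactly supported bumps $\Delta S_n$ via the surjectivity statement of Lemma~\ref{l:rotate}, with the phase $\varphi_n$ as the remaining free parameter. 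If you want to salvage your scheme you would need either to replace $\tanh(x-R_n)$ by an amplitude with quantitative decay (at which point you are back to the paper's construction), or to supply a genuine non-resonance analysis of the infinitely many persistent frequencies, which the hypotheses of the theorem (arbitrary $\{\kappa_n\}$) do not permit.
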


We follow the construction in~\cite{Si:97} with a few modifications to guarantee smoothness. First, we need to replace ~\cite[Theorem 5]{Si:97} to allow for smoothness in $V$.

Recall that the Pr\"ufer angles $,\phi(x)$, are defined by 
$$
u'(x)=k A(x)\cos(\phi(x)),\qquad u(x)=A(x)\sin(\phi(x))
$$
where $-u''+V(x)u=k^2u$. Then, $\phi(x)$ satisfies
\begin{equation}
\label{e:prufer}
\phi'(x)=k-k^{-1}V(x)\sin^2(\phi(x)).
\end{equation}
For any $N\geq 0$, $a<b\in\mathbb{R}$. let $F:C^N([a,b])\times \mathbb{R}^n\times \mathbb{T}^n\to \mathbb{T}^n$ to be the generalized Pr\"ufer angles with potential $V$, $\phi_i(x;V,k,\theta)|_{x=b}$, where $\phi_i(0;V,k,\theta)=\theta_i$ and we put $k=k_i$ in~\eqref{e:prufer}.

\begin{lemma}
\label{l:rotate}
Fix $[a,b]\subset (0,\infty)$, $U\Subset (a,b)$ open, $N>0$, $k_1,\dots k_n>0$ distinct,  $\theta^{(0)}\in \mathbb{T}^n$, and $\e>0$. Then there is $\delta>0$ such that for all angles $\theta^{(1)}\in \mathbb{T}^n$ satisfying
$$
|\theta^{(1)}- kb-\theta^{(0)}|<\delta,
$$
there is $V\in C_c^\infty(U)$ with $\|V\|_{C^N}<\e$ and $F(V,k,\theta^{(0)})=\theta^{(1)}.$
\end{lemma}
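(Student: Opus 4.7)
The plan is to linearize the map $V\mapsto F(V,k,\theta^{(0)})$ at $V=0$, verify that the derivative is surjective onto $\mathbb{R}^n \cong T_{kb+\theta^{(0)}}\mathbb{T}^n$ via linear independence of certain trigonometric functions, and then apply the inverse function theorem on a finite-dimensional subspace of $C_c^\infty(U)$ whose image under the derivative spans. Smooth dependence of ODE solutions on parameters makes $F$ a smooth map from $C^N([a,b])$, and hence from $C_c^\infty(U)$ (via extension by zero), to $\mathbb{T}^n$.

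First I would compute the derivative. The unperturbed angles are $\phi_i^0(x)=k_ix+\theta_i^{(0)}$, and since the linearization of the right-hand side of~\eqref{e:prufer} in the $\phi$-variable vanishes when $V=0$, variation of parameters yields
$$
D_V F\big|_{V=0}(v) = \Big(-k_i^{-1}\int_U v(x)\sin^2(k_i x + \theta_i^{(0)})\,dx\Big)_{i=1}^n,\qquad v\in C_c^\infty(U).
$$
To see this map is surjective, I would use the identity $\sin^2\alpha=\tfrac12-\tfrac12\cos 2\alpha$ together with the fact that the frequencies $2k_i>0$ are distinct: the $n+1$ real-analytic functions $\{1,\cos(2k_ix+2\theta_i^{(0)})\}_{i=1}^n$ are linearly independent on $\mathbb{R}$, hence remain so on the open set $U$ by analytic continuation. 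It follows that $\{\sin^2(k_ix+\theta_i^{(0)})\}_{i=1}^n$ are linearly independent continuous functions on $U$, so I can select $V_1,\dots,V_n\in C_c^\infty(U)$ making the matrix
$$
M_{ij}:=-k_i^{-1}\int_U V_j(x)\sin^2(k_ix+\theta_i^{(0)})\,dx
$$
invertible.

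Next I would define $\Phi:\mathbb{R}^n\to\mathbb{T}^n$ by $\Phi(t):=F\big(\sum_j t_jV_j,\,k,\,\theta^{(0)}\big)$, observe that $\Phi$ is smooth with $\Phi(0)=kb+\theta^{(0)}$ and $D\Phi(0)=M$ invertible, and apply the inverse function theorem. This produces $\eta>0$ and a diffeomorphism from $B_\eta(0)\subset\mathbb{R}^n$ onto a neighborhood of $kb+\theta^{(0)}$ in $\mathbb{T}^n$ containing a ball of some radius $\delta_0>0$. Given the prescribed $\e>0$, I would shrink $\eta$ (and correspondingly $\delta\leq\delta_0$) so that $|t|<\eta$ forces $\|\sum_j t_jV_j\|_{C^N}<\e$. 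Then for any $\theta^{(1)}$ with $|\theta^{(1)}-kb-\theta^{(0)}|<\delta$, setting $V:=\sum_j t_jV_j$ with $t:=\Phi^{-1}(\theta^{(1)})$ gives the required $V\in C_c^\infty(U)$ with $\|V\|_{C^N}<\e$ and $F(V,k,\theta^{(0)})=\theta^{(1)}$.

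The essential content of the argument is the linear independence of $\{\sin^2(k_ix+\theta_i^{(0)})\}_{i=1}^n$ on the open interval $U$, which is where distinctness (and positivity) of the $k_i$ enters decisively. Once this is established, restriction to a fixed finite-dimensional subspace of $C_c^\infty(U)$ reduces the problem to a routine finite-dimensional inverse function theorem, and the desired smallness in $C^N$ follows mechanically from smallness of the finitely many coefficients $t_j$.
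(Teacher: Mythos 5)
Your proposal is correct and follows essentially the same route as the paper: compute the derivative of $F$ in $V$ at $V=0$, obtain surjectivity from linear independence of the functions $\sin^2(k_ix+\theta_i^{(0)})$ on $U$, and conclude by the inverse/implicit function theorem on a finite-dimensional subspace of $C_c^\infty(U)$, with the $C^N$-smallness coming from smallness of the finitely many coefficients. The only (inessential) difference is how linear independence is verified — you use $\sin^2\alpha=\tfrac12-\tfrac12\cos2\alpha$ together with distinctness of the frequencies and analytic continuation, while the paper differentiates repeatedly; both are fine.
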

\begin{proof}
Note that $F(0,k,\theta^0)=(\theta_1^{(0)}+k_1b,\dots \theta_n^{(0)}+k_nb)$ and 
$
\phi_i(x;V=0)=\theta_i^{(0)}+k_ix.
$
Therefore, we need only show that the differential (in $V$) is surjective when restricted to functions in $C_c^\infty(U)$. For this, let $\chi \in C_c^\infty(U)$ with $\chi \equiv 1$ on a nonempty open interval $I$. Note that if $V_\e=\e \chi V(x)$, 
$$
\partial_\e \phi_i'(x;V_\e)|_{\e=0}=-k_i^{-1}\chi(x)V(x)\sin^2(k_ix+\theta_i^{(0)}),\qquad \partial_\e \phi_i(0;V_\e)|_{\e=0}=0.
$$
Hence,
$$
\partial_\e F_i(V_\e)|_{\e=0}=-k_i^{-1}\int \chi(x)V(x)\sin^2(k_i x+\theta_i^{(0)})dx.
$$
We claim that $u_i(x):=\chi(x)\sin^2(k_i x+\theta_i^{(0)})$ are linearly independent in $L^2$. Indeed, suppose $0<k_1<\dots k_n$ and $\sum_{i=1}^K \alpha_i u_i(x)=0$  a.e. with $\alpha_K\neq 0$ (and hence, by continuity for all $x$). Differentiating enough times, we see that $\alpha_K\equiv 0$, a contradiction.

Thus, there are $V_1,\dots V_n\in C^\infty$ such that $(\partial_\e F(\e \chi V_i))_{i=1}^n$ is a basis for $\mathbb{R}^n$ and the implicit function theorem finishes the proof.
\end{proof}

\begin{proof}[Proof of Theorem~\ref{t:eigenvalue}]
We work on the half line and find $W(x)$ vanishing to infinite order at $0$ such that there are $L^2$ solutions, $u_n$ of 
$$
-u_n''(x)+W(x)u_n(x)=\kappa_n^2u_n(x),\quad x\in[0,\infty) \qquad u_n(0)=0.
$$
The case of the line then follows by extending $W$ to an even function and $u_n$ to an odd function.

Let $\chi \in C^\infty(\mathbb{R})$ with $\chi \equiv 1$ on $[2,\infty)$, $\supp \chi \subset (1,\infty)$ and define $\chi_n(x):=\chi(R_n^{-1}x)$ where $R_n\to \infty$, $R_n\geq 1$ are to be chosen later. We put
$$
(\Delta L_n)(x):=4 \kappa_n \frac{\chi_n(x)}{x}\sin (2\kappa_n x+\varphi_n)
$$
where $\varphi_n$ is also to be chosen. We will also find $\Delta S_n$ to be smooth function supported on $(2^{-n},2^{-n+1})$ with $\|\Delta S_n\|_{C^n}\leq \frac{1}{2^n}$ and put
$$
W_m(x)=\sum_{n=1}^m(\Delta L_n+\Delta S_n)(x),\qquad W(x):=\lim_{m\to \infty} W_m(x),\qquad \tilde{W}_m:=W_m-\Delta S_m.
$$

Note that by construction $\sum_n \Delta S_n\in C^\infty([0,1))$, $\sum_n\Delta S_n$ vanishes to infinite order at $0$, and 
$$
\Delta L_n(x)= -e^{2i\kappa_n x}2i\kappa_n e^{i\varphi_n}\chi_n(x)x^{-1}+e^{-2i\kappa_n x}2i\kappa_n e^{-i\varphi_n}\chi_n(x)x^{-1}.
$$
In particular, 
$$
\Delta L_n(x)= e^{2i\kappa_n x}w_{2\kappa_n}(x)+e^{-2i\kappa_n x}w_{-2\kappa_n}(x)
$$
with $w_{\pm 2\kappa_n}= \mp 2i \kappa_n e^{\pm i\varphi_n}\chi_n(x)x^{-1}.$ In particular,
\begin{equation}
\label{e:symbolW}
|\partial_x^k w_{\pm 2\kappa_n}|\leq C_k\kappa_n R_n^{-1}\langle x\rangle^{-k}.
\end{equation}
In order to obtain the estimate~\eqref{e:rapidDecay}, we fix a positive Schwartz function $f$ and choose $R_n$ $R_n\geq  \frac{1}{f(\langle \kappa_n\rangle\langle n\rangle)}$. The estimate~\eqref{e:symbolW} then guarantees that $\sum_n \Delta L_n$ is bounded with all derivatives. The fact that $\Delta S_n \in C_c^\infty(2^{-n},2^{-n+1})$ and $\|\Delta S_n\|_{C^n}\leq \frac{1}{2^n}$ guarantees that $w_0=\sum_n \Delta S_n\in C^\infty([0,1))$ and $w_0$ vanishes to infinite order at $0$.

Now, note that $\phi_n(\xi):=\mc{F}((\cdot)^{-1}\chi_n(\cdot))(\xi)$ is smooth away from $\xi=0$. Therefore, for each $m\neq n$, we can find $\psi_{n,m}\in C_c^\infty( 0,1)$ such that 
\begin{gather*}
\mc{F}(\psi_{n.m})(0)=-2i\kappa_n(-\phi_n(2\kappa_n)e^{i\varphi_n}-\phi_n(2\kappa_n)e^{-i\varphi_n})\\
\mc{F}(\psi_{n,m})(\pm 2\kappa_m)=-2i\kappa_n(\phi_m(2(\pm \kappa_m-\kappa_n))e^{i\varphi_n}-\phi_n(2(\kappa_n\pm\kappa_m))e^{-i\varphi_n}).
\end{gather*}

Then, letting $\psi_{n,n}=0$ and defining $\tilde{L}_{n,m}:=\Delta L_n-\psi_{n,m}$, there are $A_{n,m}$, $A_{n,m}^{\pm}$ such that 
\begin{equation}
\label{e:condition}
\begin{gathered}
|\tilde{L}_{n,m}|\leq C|x|^{-1},\qquad \tilde{L}_{n,m}=A_{n.m}',\quad |A_{n,m}|\leq C|x|^{-1},\\
e^{\pm 2i\kappa_m x}\tilde{L}_{n,m}=(A_{n,m}^\pm)',\quad |A_{n,m}^\pm(x)|\leq C|x|^{-1}.
\end{gathered}
\end{equation}

By the conditions~\eqref{e:condition} and~\cite[Theorem 3]{Si:97}, there is a unique function $u_n^{(m)}(x)$ satisfying
\begin{gather}
-(u_n^{(m)})''+W_m(x)u_n^{(m)}=\kappa_n^2u_{n}^{(m)},\qquad \N{u_n^{(m)}-\sin((\kappa_n+\tfrac{1}{2}\varphi_n)\cdot)(1+|\cdot|)^{-1}}<\infty.
\end{gather}
where $\N{u}=\|(1+x^2)u\|_\infty+\|(1+x^2)u'\|_\infty.$ Similarly, there is a unique function $\tilde{u}_{n}^{(m)}(x)$ satisfying
\begin{gather}
\label{e:asymptoticConstruct}
-(\tilde{u}_n^{(m)})''+\tilde{W}_m(x)\tilde{u}_n^{(m)}=\kappa_n^2\tilde{u}_{n}^{(m)},\qquad \N{\tilde{u}_n^{(m)}-\sin((\kappa_n+\tfrac{1}{2}\varphi_n)\cdot)(1+|\cdot|)^{-1}}<\infty.
\end{gather}

Now, we construct $\Delta L_n$, $\Delta S_n$ such that 
\begin{gather}
\label{e:perturbIt}
\N{u_n^{(m)}-u_{n}^{(m-1)}}\leq 2^{-m},\quad n=1,2,\dots, m-1,\qquad u_n^{(m)}(0)=0,\quad n=1,\dots,m.
\end{gather}
Once we have done this, we can let $u_n=\lim_{m}u_n^{(m)}$ (in the $\N{\cdot}$ norm) to obtain $L^2$ eigenfunctions with eigenvalue $\kappa_n$.

Let $m\geq 1$ and suppose we have chosen $\{(R_n,\varphi_n)\}_{n=1}^{m-1}$, and $\Delta S_1,\dots \Delta S_{m-1}\in C_c^\infty$ with $\supp \Delta S_n\subset (2^{-n},2^{-n+1})$ and $\|\Delta S_n\|_{C^n}\leq \frac{1}{2^n}$ such that~\eqref{e:perturbIt} holds and $R_n\geq 1/f(\langle n\rangle \langle \kappa_n\rangle)$.

By~\cite[Theorem 3]{Si:97}, there are $\e_m$ $\tilde{R}_m$ such that for all $R_m\geq \tilde{R}_m$, and $\varphi_m\in[0, 2\pi/(2\kappa_m)]$,  if $\|\Delta S_m\|_{C^0}\leq \e_m$, then
$$
\N{u_i^{m}-\tilde{u}_i^m}\leq 2^{-m-1}.
$$
Observe that by Lemma~\ref{l:rotate}, there is $\delta_m>0$ small enough such that if $|\theta_i^{(1)}-\kappa_i 2^{-m+1}|<\delta_m$ and $\theta_i^{(1)}$ are the Pr\"ufer angles of the solutions $\tilde{u}_i^{m}$, $i=1,\dots, m$ at $2^{-m+1}$, then there is $\Delta S_m\in C_c^\infty( 2^{-m},2^{-m+1})$ with $\|\Delta S_m\|_{C^m}\leq\min(2^{-m},\e_m)$ and such that $u_i^{(m)}(0)=0$.  Therefore, if we can find $R_m\geq \tilde{R}_m$ and $\varphi_m$ such that $|\theta_i^{(1)}-\kappa_i2^{-m+1}|<\delta_m$, and
$$
\N{u_i^{m-1}-\tilde{u}_i^m}\leq 2^{-m-1},
$$
the proof will be complete.

Once again by~\cite[Theorem 3]{Si:97}, for $R_m$ large enough, we have (uniformly in $\varphi_m\in[0,2\pi/(2\kappa_m)])$, $\N{u_i^{(m-1)}-\tilde{u}_i^{(m)}}<2^{-m-1}$ for $i=1,\dots, m-1$ and the Pr\"ufer angles for $\tilde{u}_i^{(m)}$ at $2^{-m+1}$ satisfy $|\theta_i^{(1)}-\kappa_i b_i|<\delta$ for $i=1,\dots m-1$). 

Finally, we choose $\varphi_m$ so that $\tilde{u}_m^{(m)}(0)=0$. The existence of such a $\varphi_m$ again follows from~\cite[Theorem 3]{Si:97}. In particular, note that by part (b) there, we have~\eqref{e:asymptoticConstruct} uniformly over $R_m$ large enough, $x$ large enough, and $\varphi_m\in[0,2\pi/(2\kappa_m)]$. In particular, the Pr\"ufer angles for $\tilde{u}_m^{(m)}$, $\tilde{\phi}_m(x)$  run through a full circle. Therefore, we can choose $R_m$ large enough and $\varphi_m$ such that the $\tilde{\phi}_m(R_n)$ agrees with the Pr\"ufer angle of the solution to $u$ to $-u''+W_{m-1}(x)u=\kappa_m^2u$, $u(0)=0$ and hence, since $W_{m-1}=\tilde{W}_m$ on $x\leq R_n$, we have that $\tilde{u}_m(0)=0$.

\end{proof}

\bibliography{biblio}
\bibliographystyle{alpha}

\end{document}